\newtheorem{theorem}{Theorem}
\newtheorem{lemma}[theorem]{Lemma}
\newtheorem{corollary}[theorem]{Corollary}
\newtheorem{proposition}[theorem]{Proposition}
\newtheorem{remark}[theorem]{Remark}
\newtheorem{assumption}{Assumption}
\def\bpf{\begin{proof}}
\def\epf{\end{proof}}
\numberwithin{equation}{section}
\DeclareMathOperator*{\argmin}{argmin}
\DeclareMathOperator{\interior}{int}
\newcommand{\defeq}{:=}
\DeclareMathOperator{\Co}{Co}
\def\btheta{\bar{\theta}}
\def\RR{\mathbb{R}}
\def\NN{\mathbb{N}}
\def\R{\mathcal{R}}
\def\B{\mathcal{B}}
\def\N{\mathcal{N}}
\def\X{\mathcal{X}}
\def\S{\mathcal{S}}
\def\W{\mathcal{W}}
\def\M{\mathcal{M}}
\def\probw{p_w}
\def\hw{\hat{w}}
\def\Id{{\mathcal { {I}}}}
\def\ones{{\bf 1}}
\def\Wtight{\Omega}
\newcommand{\blue}[1]{\textcolor{black}{#1}}
\newcommand{\red}[1]{\textcolor{black}{#1}}
\begin{document}

\title{Robust Adaptive Model Predictive Control:
Performance and Parameter Estimation}

\author{Xiaonan Lu$\mbox{}^\ast$, Mark Cannon\thanks{Department of Engineering Science, University of Oxford, UK}~~and Denis Koksal-Rivet\thanks{Department of Mathematics, University of Chicago, Chicago, USA}}

\maketitle






 
\begin{abstract}
For systems with uncertain linear models, bounded additive disturbances and state and control constraints, a robust model predictive control algorithm incorporating online model adaptation is proposed.
Sets of model parameters are identified online and employed in a robust tube MPC strategy with a nominal cost. The algorithm is shown to be recursively feasible and input-to-state stable.
%
%
%
Computational tractability is ensured by using polytopic sets of fixed complexity to bound parameter sets and predicted states. 
Convex conditions for persistence of excitation are derived and are related to probabilistic rates of convergence and asymptotic bounds on parameter set estimates. 
We discuss how to balance conflicting requirements on control signals for achieving good tracking performance and parameter set estimate accuracy.
Conditions for convergence of the estimated parameter set are discussed for the case of fixed complexity parameter set estimates, inexact disturbance bounds and noisy measurements.

\noindent\textbf{keywords:}
Control of constrained systems, Adaptive control, Parameter set estimation, Receding horizon control, Stochastic convergence
\end{abstract}



\section{Introduction}
Model Predictive Control (MPC) repeatedly solves a finite-horizon optimal control problem subject to input and state constraints. At each sampling instant a model of the plant is used to optimize predicted behaviour and the first element of the optimal predicted control sequence is applied to the plant \cite{Mayne2000}. 
Any mismatch between model and plant causes degradation of controller performance \cite{Badwe2009}. As a result, the amount of model uncertainty strongly affects the bounds of the achievable performance of a robust MPC algorithm \cite{Kouvaritakis2015}.

To avoid the disruption caused by intrusive plant tests~\cite{Badwe2009}, adaptive Model Predictive Control attempts to improve model accuracy online while satisfying operating constraints and providing stability guarantees. Although the literature on adaptive control has long acknowledged the need for persistently exciting inputs for system identification~\cite{Narendra1987}, few papers have explored how to incorporate Persistency of Excitation (PE) conditions with feasibility guarantees 
within adaptive MPC~\cite{Mayne2014}. In addition, adaptive MPC algorithms must balance conflicting requirements for system identification accuracy and computational complexity \cite{Mayne2014,Qin2003}.

Various methods for estimating system parameters and meeting operating constraints are described in the adaptive MPC literature.
Depending on the assumptions on model parameters, parameter identification methods such as {recursive least squares \cite{Heirung2017}}, comparison sets \cite{Aswani2013}, set membership identification \cite{Tanaskovic2014,Lorenzen2018} and neural networks \cite{Akpan2011,Reese2016} have been proposed. 
Heirung et al.~\cite{Heirung2013} propose an algorithm where the unknown parameters are estimated using recursive least squares (RLS) and system outputs are predicted using the resulting parameter estimates. The use of RLS introduces nonlinear equality constraints into the optimisation. 
On the other hand, the comparison model approach described in Aswani et al.~\cite{Aswani2013} addresses the trade-off between probing for information and output regulation by decoupling these two tasks; a nominal model is used to impose operating constraints whereas performance is  evaluated via a model learned online using statistical identification tools.
However the use of a nominal model implies that the comparison model approach
cannot guarantee robust constraint satisfaction. 

Tanaskovic et al.~\cite{Tanaskovic2014} consider a linear Finite Impulse Response (FIR) model with measurement noise and constraints. This approach updates a model parameter set using online set membership identification; constraints are enforced for the entire parameter set and performance is optimized for a nominal prediction model. The paper proves recursive feasibility but does not show convergence of the identified parameter set to the true parameters. To avoid the restriction to FIR models, Lorenzen et al.~\cite{Lorenzen2018} consider a linear state space model with additive disturbance. An online-identified set of possible model parameters is used to robustly stabilize the system.
However the approach suffers from a lack of flexibility in its robust MPC formulation, which is based on homothetic tubes~\cite{Rakovic2012}, allowing only the centers and scalings of tube cross-sections to be optimized online, and it does not provide convex and recursively feasible conditions to ensure persistently exciting control inputs. 

In this paper we also consider linear systems with parameter uncertainty, additive disturbances and constraints on system states and control inputs. Compared with \cite{Lorenzen2018}, the proposed algorithm reduces the conservativeness in approximating predicted state tubes by adopting more flexible cross-section representations. Building on \cite{Lu2019}, we take advantage of fixed complexity polytopic tube representations and use hyperplane and vertex representations interchangeably to further simplify computation. 
We use, similarly to \cite{Heirung2013}, a nominal performance objective
 {, but we impose constraints}
robustly on all possible models within the identified model set. We prove that the closed loop system is input-to-state stable (ISS). In comparison with the min-max approach of \cite{Lu2019}, the resulting performance bound takes the form of an asymptotic bound on the 2-norm of the sequence of closed loop states in terms of the 2-norms of the additive disturbance and parameter estimate error sequences. 
\blue{In addition, we convexify the persistence of excitation (PE) condition around a reference trajectory and include a penalty term in the cost function to promote convergence of the parameter set. The convexification method is somewhat analogous to that proposed in \cite{Ferizbegovic2020,Iannelli2019}, where the uncertainty information of the parameter set is approximated using a nominal gain. Here however the convexification is obtained by direct linearization of PE constraints on predicted trajectories. The cost function modification proposed here allows the relative importance of the two objectives, namely controller performance and convergence of model parameters, to be specified.}

Bai et al.~\cite{Bai1998} consider a particular set membership identification algorithm and show that the parameter set estimate converges with probability~1 to the actual parameter vector (assumed constant) if: (a) a tight bound on disturbances is known; (b) the input sequence is persistent exciting and (c) the minimal parameter set estimate is employed.
However the minimal set estimate can be arbitrarily complex, and to provide computational tractability various non-minimal parameter set approximations have been proposed, such as $n$-dimensional balls \cite{Adetola2009} and bounded complexity polytopes \cite{Tanaskovic2014}. The current paper allows the use of parameter set estimates with fixed complexity and proves that, despite their approximate nature, such parameter sets converge with probability~1 to the true parameter values. We also derive lower bounds on convergence rates for the case of inexact knowledge of the disturbance bounding set and for the case that model states are estimated in the presence of measurement noise. 

This paper has five main parts. Section \ref{sec:problem_definition} defines the problem and basic assumptions. Section \ref{sec:AMPC} gives details of the parameter estimation, robust constraint satisfaction, nominal cost function, convexified PE conditions and the MPC algorithm. Section \ref{sec:feasibility_stability} proves recursive feasibility and input-to-state stability of the proposed algorithm. Section \ref{sec:convergence_parameter} proves the convergence of the parameter set in various conditions and Section \ref{sec:example} illustrates the approach with numerical examples.


%


\textit{Notation:} 
$\mathbb{N}$ and $\mathbb{R}$ denote the sets of integers and reals, and 
$\mathbb{N}_{\geq 0} = \{n\in\mathbb{N} : n \geq 0\}$,
$\mathbb{N}_{[p,q]} = \{n\in\mathbb{N} : {p \leq n \leq q}\}$. The $i$th row of a matrix $A$ and $i$th element of a vector $a$ are denoted $[A]_i$ and $[a]_i$. Vectors and matrices of $1$s are denoted $\ones$, and $\Id$ is the identity matrix. 
For a vector $a$, $\| a \|$ is the Euclidean norm and $\| a\|^2_P = a^\top P a$; the largest element of $a$ is $\max a$ and $[a]_{\geq 0}=\max \{ 0,a \}$. The absolute value of a scalar $s$ is $\lvert s \rvert$ and the floor value is $\lfloor s \rfloor$. 
$\lvert \mathcal{S} \rvert$ is the number of elements in a set $\mathcal{S}$. $\mathcal{A}\oplus \mathcal{B}$ is Minkowski addition for sets $\mathcal{A}$ and $\mathcal{B}$, and $\mathcal{A}\oplus \mathcal{B} = \{a+b: a\in \mathcal{A},b\in \mathcal{B}\}$.
The matrix inequality
$A \succeq 0$ (or $A \succ 0$) indicates that $A$ is positive semidefinite (positive definite) matrix. 
The $k$ steps ahead predicted value of a variable $x$ is denoted $x_k$, and the more complete notation $x_{k|t}$ indicates the $k$ steps ahead prediction at time $t$. 
%
A continuous function $\sigma: \RR_{\geq 0} \rightarrow \RR_{\geq 0}$ is a $\mathcal{K}$-function if it is strictly increasing with $\sigma(0)=0$, 
and is a $\mathcal{K}_\infty$-function if in addition 
$\sigma(s)\rightarrow \infty$ as $s\rightarrow \infty$. 
A continuous function $\phi:{\mathbb R}_{\geq 0} \times{\mathbb R}_{\geq 0} \rightarrow {\mathbb R}_{\geq 0}$ is a  ${\mathcal K} {\mathcal L}$-function if, for all $t\geq 0$, $\phi(\cdot,t)$ is a ${\mathcal K}$-function, and, for all $s\geq 0$, $\phi(s,\cdot)$ is decreasing with $\phi(s,t)\rightarrow 0$ as $t\rightarrow \infty$.
For functions $\sigma_a$ and $\sigma_b$ we denote $\sigma_a \circ \sigma_b (\cdot) = \sigma_a\bigl(\sigma_b(\cdot)\bigr)$, and $\sigma_a^{k+1}(\cdot) = \sigma_a \circ \sigma_a^{k}(\cdot)$ with \mbox{$\sigma_a^1(\cdot) = \sigma_a(\cdot)$}. 


\section{Problem formulation and preliminaries} \label{sec:problem_definition}
This paper considers a linear system with linear state and input constraints and unknown additive disturbance:
\begin{equation} \label{eq:update_equation}
x_{t+1} = A(\theta^\ast)x_t + B(\theta^\ast) u_t +w_t ,
\end{equation}
where $x_t \in \mathbb{R}^{n_x}$ is the system state, $u_t \in \mathbb{R}^{n_u}$ is the control input, $w_t \in \mathbb{R}^{n_x}$ is an unknown disturbance input, and $t$ is the discrete time index. The system matrices $A(\theta^\ast)$ and $B(\theta^\ast)$ depend on an unknown but constant parameter $\theta^\ast\in\mathbb{R}^p$.
The disturbance sequence $\{w_0,w_1,\ldots\}$ is stochastic and $(w_i,w_j)$ is independent for all $i\neq j$.
States and control inputs are subject to linear constraints, defined for $F\in\mathbb{R}^{n_c\times n_x}$, $G\in\mathbb{R}^{n_c\times n_u}$ by
\begin{equation}\label{eq:constraints}
Fx_t +Gu_t \leq \ones \quad \forall  t \in \mathbb{N}_{\geq 0} .
\end{equation}

\begin{assumption}[Additive disturbance] \label{ass:compact_disturbance}
The disturbance $w_t$ lies in a convex and compact polytope $\W$, where
\begin{equation} \label{eq:w _set}
\W = \{ w : \Pi_w w \leq \pi_w \} 
\end{equation}
with $\Pi_w \in \mathbb{R}^{n_w \times n_x} $, $\pi_w \in \mathbb{R}^{n_w}$ and $\pi_w > 0$.
\end{assumption}

\begin{assumption}[Parameter uncertainty]\label{ass:param_uncertainty}%
The system matrices $A$ and $B$ are affine functions of the parameter vector $\theta \in \mathbb{R}^{p}$:
\begin{equation} \label{eq:system_model}
(A(\theta),B(\theta)) = (A_0,B_0) +\sum_{i = 1}^{p} (A_i,B_i) [\theta]_i
\end{equation}
for known matrices $A_j$, $B_j$, $j \in\mathbb{N}_{[1,p]}$,
and $\theta^\ast$ lies in a known, bounded, convex polytope $\Theta_0$ given by
\[
\Theta_0 = \{ \theta : M_\Theta \theta \leq \mu_{0} \} .
\]
\end{assumption}



\begin{assumption}[State and control constraints]%
\label{ass:compact_feasible_region}
The set 
\[
\mathcal{Z} = \{(x,u) \in \RR^{n_x} \times \RR^{n_u}: Fx + Gu \leq \ones \}
\] 
is compact and contains the origin in its interior.
\end{assumption}

To obtain finite numbers of decision variables and constraints in the MPC optimization problem, the predicted control sequence at time $t$ is assumed to be expressed in terms of optimization variables $v_{0|t},\ldots,v_{N-1|t}$ as
%
\begin{equation} \label{eq:input_law}
u_{k|t} = \begin{cases}Kx_{k|t}+v_{k|t} & \forall k \in \mathbb{N}_{[0,N-1]} 
\\ 
K x_{k|t} & \forall k  \geq N
\end{cases} 
\end{equation}
where $N$ is the prediction horizon. 
The gain $K$ is designed offline and is assumed to robustly stabilize the uncertain system $x_{t+1} = (A(\theta) + B(\theta)K) x_t$, $\forall \theta\in\Theta_0$ in the absence of constraints.  {This assumption can be stated as follows.}

\begin{assumption}[Feedback gain and contractive set] \label{ass:lambda_contractive}
There exists a polytopic set ${\X}=\{ x : T x \leq \ones\}$ and feedback gain $K$ such that ${\X}$ is $\lambda$-contractive for some $\lambda \in [0,1)$, i.e.
\begin{equation}\label{eq:lambda}
T \bigl(A(\theta) + B(\theta)K\bigr)x \leq \lambda \ones 
\end{equation}
for all $x\in\{x: Tx \leq \ones \}$ and $\theta\in\Theta_0$. The representation ${\X}=\{ x : T x \leq \ones\}$ is assumed to be minimal in the sense that it contains no redundant inequalities. 
\end{assumption}


\section{Adaptive Robust MPC} \label{sec:AMPC}

In this section a parameter estimation scheme based on~\cite{Chisci1998,Veres1999} is introduced. We then discuss the construction of tubes to bound predicted model states and associated constraints.

\subsection{Set-based parameter estimation}\label{sec:fixed_comp_set_update}

At time $t$ we use observations of the system state $x_t$ to determine a set $\Delta_t$ of unfalsified model parameters. The set $\Delta_t$ is then combined with the parameter set estimate $\Theta_{t-1}$ to construct a new parameter set estimate $\Theta_t$.

\textit{Unfalsified parameter set:}~
Define $D_t $ and $d_t $ as the matrix and vector
\begin{align}
D_t & =D(x_t,u_t)  = \begin{bmatrix} A_1 x_t+B_1 u_t &  \cdots & A_p x_t + B_p u_t\end{bmatrix} 
\label{eq:Dk}
\\
d_t & =d(x_t,u_t) = A_0 x_{t}+B_0 u_{t} . \label{eq:dk}
\end{align}
Then, given $x_t$, $x_{t-1}$, $u_{t-1}$ and the disturbance set $\W$ in (\ref{eq:w _set}), the unfalsified parameter set at time $t$ is given by
\begin{equation} \label{eq:delta_t_set}
\Delta_t = \{ \theta : x_t -A(\theta)x_{t-1} -B(\theta)u_{t-1} \in \W\}
=\{\theta : P_t \theta \leq q_t\} 
\end{equation}
with $P_t = {-\Pi_w D_{t-1} }$ and $q_t = \pi_w +\Pi_w (d_{t-1}-x_t)$.

\textit{Parameter set update:}~
Let $M_\Theta\in\RR^{r\times p}$ be an \textit{a priori} chosen matrix. 
The estimated parameter set $\Theta_t$ is defined by
\begin{equation} \label{eq:defn_Theta_set}
\Theta_t  = \Theta(\mu_t) = \{ \theta : M_{\Theta} \theta \leq \mu_{t} \}
\end{equation}
where  {$\mu_t \in \RR^r$} is updated online at times $t\in\mathbb{N}_{\geq 0}$. 
The complexity of $\Theta_t$ is controlled by fixing $M_\Theta$, which fixes the directions of the half-spaces defining the parameter set. 
We assume that $M_\Theta$ is chosen so that $\Theta_t$ is compact for all $\mu_t$ such that $\Theta_t\neq \emptyset$.
Using a block recursive polytopic update method \cite{Chisci1998}, $\Theta_{t}$ is defined as the smallest set (\ref{eq:defn_Theta_set}) containing the intersection of $\Theta_{t-1}$ and unfalsified sets $\Delta_j$ over a window of length $N_u$:
\begin{equation} \label{eq:fixed_set_update}
\mu_{t} = \min_{\mu\in\RR^r} \text{vol} \bigl(\Theta(\mu)\bigr)
\ \ \text{subject to}\ \ 
\Theta(\mu) \supseteq \bigcap_{j=t-N_u+1}^t  \Delta_j \cap \Theta_{t-1}
\end{equation}
(where $\Delta_j = \RR$ for all $j\leq0$). We refer to $N_u$ as the PE window.
Note that $N_u$ is independent of the MPC prediction horizon $N$. 
%
Using linear conditions for polyhedral set inclusion~\cite{Blanchini}
$\mu_{t}$ in (\ref{eq:fixed_set_update}) can be obtained by solving a linear program for each $i \in \mathbb{N}_{[1,r]}$:
\[
[ \mu_{t} ]_i = \min_{\mu, \, H_i} \mu 
\ \ \text{subject to} \ \ 
H_i  \begin{bmatrix} M_{\Theta} \\ P_{t-N_u+1}\\ \vdots \\ P_t \end{bmatrix} =  [M_{\Theta}]_i,\ H_i \begin{bmatrix} \mu_{t-1} \\ q_{t-N_u+1} \\ \vdots \\ q_t \end{bmatrix} \leq \mu, \  H_i \geq 0.
\]

%

\begin{lemma}\label{lem:monotonic_param_set}
If $\theta^\ast\in\Theta_0$ 
and $\Theta_{t}$ is defined by (\ref{eq:defn_Theta_set}), (\ref{eq:fixed_set_update}), then 
$\theta^\ast \in \Theta_t $ and
$\Theta_{t} \supseteq \Theta_{t+1} \supseteq (\Theta_{t} \cap \Delta_{t+1})$ for all $t\in\mathbb{N}_{\geq 0}$.
\end{lemma}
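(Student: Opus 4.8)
The plan is to prove the two assertions in turn: $\theta^\ast\in\Theta_t$ by induction on $t$, and then the chain $\Theta_t\supseteq\Theta_{t+1}\supseteq(\Theta_t\cap\Delta_{t+1})$ directly from the defining properties of the update \eqref{eq:defn_Theta_set}--\eqref{eq:fixed_set_update}.

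For the membership claim the base case $t=0$ is Assumption~\ref{ass:param_uncertainty}. The observation that drives the induction is that the true parameter is never falsified: substituting the plant dynamics \eqref{eq:update_equation} into the defining condition of $\Delta_j$ in \eqref{eq:delta_t_set} gives $x_j-A(\theta^\ast)x_{j-1}-B(\theta^\ast)u_{j-1}=w_{j-1}$, which lies in $\W$ by Assumption~\ref{ass:compact_disturbance}; hence $\theta^\ast\in\Delta_j$ for every $j\ge1$, and trivially for $j\le0$. Assuming $\theta^\ast\in\Theta_{t-1}$, it follows that $\theta^\ast$ belongs to $\bigcap_{j=t-N_u+1}^{t}\Delta_j\cap\Theta_{t-1}$; in particular this intersection is non-empty and, being a subset of the compact set $\Theta_{t-1}$, bounded, so the linear programs defining $\mu_t$ are well posed, $\Theta_t$ is a well-defined polytope, and (being non-empty) is compact by the standing assumption on $M_\Theta$. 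Since \eqref{eq:fixed_set_update} enforces $\Theta_t=\Theta(\mu_t)\supseteq\bigcap_{j=t-N_u+1}^{t}\Delta_j\cap\Theta_{t-1}\ni\theta^\ast$, we obtain $\theta^\ast\in\Theta_t$, closing the induction.

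For the chain of inclusions I would first record, from the row-wise LP reformulation of \eqref{eq:fixed_set_update} and LP duality, that $[\mu_{t+1}]_i=\max\{[M_\Theta]_i\theta:\theta\in\Theta_t\cap\bigcap_{j=t-N_u+2}^{t+1}\Delta_j\}$, so that $\mu_{t+1}$ is the \emph{componentwise smallest} vector with $\Theta(\mu_{t+1})\supseteq\Theta_t\cap\bigcap_{j=t-N_u+2}^{t+1}\Delta_j$. The upper inclusion $\Theta_{t+1}\subseteq\Theta_t$ is then immediate, since $\mu_t$ is itself feasible for this minimisation ($\Theta_t=\Theta(\mu_t)$ trivially contains $\Theta_t\cap\bigcap_j\Delta_j$), whence $\mu_{t+1}\le\mu_t$ componentwise and $\Theta(\mu_{t+1})\subseteq\Theta(\mu_t)$. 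The lower inclusion is the containment constraint of \eqref{eq:fixed_set_update} at time $t+1$, i.e. $\Theta_{t+1}\supseteq\Theta_t\cap\bigcap_{j=t-N_u+2}^{t+1}\Delta_j$; for a unit PE window this set is precisely $\Theta_t\cap\Delta_{t+1}$ and the claim follows, while for $N_u>1$ one must still show that re-intersecting with the previously processed sets $\Delta_{t-N_u+2},\dots,\Delta_t$ deletes no point of $\Theta_t\cap\Delta_{t+1}$.

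The induction and the upper inclusion are routine. I expect the lower inclusion for $N_u>1$ to be the main obstacle: since $\Theta_t$ is only an \emph{outer} approximation, with the fixed facet normals $M_\Theta$, of $\bigcap_{j=t-N_u+1}^{t}\Delta_j\cap\Theta_{t-1}$, the constraints $\Delta_{t-N_u+2},\dots,\Delta_t$ need not be redundant relative to $\Theta_t\cap\Delta_{t+1}$, so this step requires an argument that exploits the block-recursive structure of the estimator of \cite{Chisci1998} rather than the one-line reduction available when $N_u=1$.
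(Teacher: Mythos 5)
The paper states this lemma without any proof, so there is nothing to reconcile your argument with; judged on its own merits, your first two steps are correct and are surely what the authors intend. The induction showing $\theta^\ast\in\Theta_t$ (via $x_j-A(\theta^\ast)x_{j-1}-B(\theta^\ast)u_{j-1}=w_{j-1}\in\mathcal{W}$, so $\theta^\ast\in\Delta_j$ for all $j$) is complete, and the monotonicity $\Theta_{t+1}\subseteq\Theta_t$ follows as you say from the row-wise LP form of (\ref{eq:fixed_set_update}): even without invoking duality, the certificate $H_i=[\,e_i^\top\ 0\ \cdots\ 0\,]$ is feasible with value $[\mu_t]_i$, so $\mu_{t+1}\leq\mu_t$ componentwise.

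The step you flag as an obstacle is a genuine gap, and it cannot be closed from (\ref{eq:defn_Theta_set})--(\ref{eq:fixed_set_update}) alone: for $N_u>1$ the update only guarantees $\Theta_{t+1}\supseteq\Theta_t\cap\bigcap_{j=t-N_u+2}^{t+1}\Delta_j$, and re-intersecting with the previously processed $\Delta_j$ can indeed delete points of $\Theta_t\cap\Delta_{t+1}$ that the fixed-complexity outer approximation does not restore. A concrete instance consistent with the update law (take $p=2$, $N_u=2$, $M_\Theta$ defining axis-aligned boxes): if $\Theta_t=[0,1]^2$ is the bounding box of a set cut by $\Delta_t=\{\theta:[\theta]_1+[\theta]_2\geq 1\}$ (so $\Theta_t\not\subseteq\Delta_t$) and $\Delta_{t+1}=\{\theta:[\theta]_2\leq 0.2\}$, then $\Theta_t\cap\Delta_t\cap\Delta_{t+1}$ is the triangle with vertices $(1,0)$, $(0.8,0.2)$, $(1,0.2)$, whose minimal box is $\Theta_{t+1}=[0.8,1]\times[0,0.2]$, which does not contain $\Theta_t\cap\Delta_{t+1}=[0,1]\times[0,0.2]$. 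Hence the lower inclusion as literally stated holds only for $N_u=1$ (or if $\Delta_{t+1}$ is replaced by the full window intersection), and no argument based solely on the block-recursive update can prove it in general; any honest proof should either restrict to $N_u=1$, weaken the claim to $\Theta_{t+1}\supseteq\Theta_t\cap\bigcap_{j=t-N_u+2}^{t+1}\Delta_j$, or invoke additional structure not stated in the paper. Note that the results that cite this lemma (recursive feasibility, and Theorem~\ref{thm:fixed_set} via Remark~\ref{rem:periodic_update}) in fact only use $\theta^\ast\in\Theta_t$ and the nesting $\Theta_{t+1}\subseteq\Theta_t$, both of which you have established.
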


\subsection{Polytopic tubes for robust constraint satisfaction}
This section considers predicted state and control trajectories. To simplify notation, we omit the subscript $t$ indicating the time at which state and control predictions are made, 
\blue{whenever $t$ indicates current time}; thus 
the $k$ steps ahead predictions $x_{k|t}$, $v_{k|t}$ are denoted $x_k$, $v_k$.
To ensure that the predicted state and control sequences satisfy the operating constraints~(\ref{eq:constraints}) robustly for the given uncertainty bounds, we construct a tube (a sequence of sets) $\X_0,\X_1,\ldots \subset \mathbb{R}^{n_x}$ satisfying, 
for all $x\in \X_k$, $w\in\W$, $\theta\in\Theta_t$,
\begin{equation} 
\label{eq:recurrence_constraint}
\bigl(A(\theta) + B(\theta)K\bigr)x + B(\theta) v_k + w \in \X_{k+1} 
\ \ \forall k\in\mathbb{N}_{\geq 0} .
\end{equation}

\textit{Hyperplane form:}~
For given $T\in\mathbb{R}^{n_\alpha\times n_x}$ satisfying Assumption~\ref{ass:lambda_contractive} and $\alpha_k\in\mathbb{R}^{n_\alpha}$, let $\X_{k}\subset\RR^{n_x}$ denote the $k$ steps ahead cross-section of the predicted state tube:
\begin{equation} \label{eq:T_set}
\X_{k} = \{ x : T x \leq \alpha_k\},
\end{equation}

 {
The MPC algorithm described in Section~\ref{subsec:proposed_algorithm} optimizes the shape of the predicted state tube online by allowing $\alpha_k$ to be an optimization variable.
If, for a given $\alpha_k$, the constraint $[T]_i x \leq [\alpha_k]_i$ is redundant for some $i\in\NN_{[1,n_\alpha]}$ in the hyperplane description~(\ref{eq:T_set}) (i.e.~if the set $\X_k$ is unchanged by removing this constraint), we define (without loss of generality) $[\alpha_k]_i = \max_{x\in \X_k} [T]_i x$. 
Thus, for each $i\in\NN_{[1,n_\alpha]}$, $[\alpha_k]_i = [T]_i x$ necessarily holds for some $x\in\X_k$.} 
Then (\ref{eq:recurrence_constraint}) is equivalent to, for all $x\in \X_k$ and $\theta\in\Theta_t$,
\[
T\bigl(  A(\theta) +  B(\theta) K \bigr) x + T B(\theta) v_k + \bar{w} \leq \alpha_{k+1} , \ \ \forall k\in\mathbb{N}_{\geq 0} 
\]
where $\bar{w}$ is the vector with $i$th element $ [\bar{w}]_i = \max_{w\in\W} [T w]_i$ for all $i \in \mathbb{N}_{[1, n_{\alpha}]}$. 
Substituting $D(x,u)$ and $d(x,u)$ from (\ref{eq:Dk}), (\ref{eq:dk}), this implies linear conditions on $\theta$, for all $x\in \X_k$, $\theta\in\Theta_t$:
\[
T \big( D( x,Kx+v_k )\theta + d( x,Kx+v_k ) \big)
+ \bar{w} 
\leq \alpha_{k+1}, 
\ \ \forall k\in\mathbb{N}_{\geq 0} 
\]
and, for a given initial state $x$, the constraint $x\in\X_0$ requires  
%
\begin{equation} \label{eq:initial_con}
Tx \leq \alpha_0 .
\end{equation}

\textit{Vertex form:}~
$\X_{k}$ has an equivalent representation in terms of its vertices, which we denote as $x_k^{(j)}$, $j\in\mathbb{N}_{[1,m]}$:
\begin{equation}\label{eq:vertex_form} 
\X_{k} = \Co\{ x_{k}^{(1)}, \dots,  x_{k}^{(m)} \} .
\end{equation}

 {Note that $m$, the number of vertices of $\X_k$, is fixed, and for given $\alpha_k$ in (\ref{eq:T_set}), we may have  $x_k^{(i)}  = x_k^{(j)}$ for some $i \neq j \in\mathbb{N}_{[1,m]}$ in (\ref{eq:vertex_form}) (i.e.~the vertex description may contain repeated vertices). The presence of repeated vertices does not affect the formulation.}
For each $j \in \mathbb{N}_{[1, m]}$, define an index set $\R_j$  (with $\lvert \R_j \rvert = n_x$),
such that the $i$th row of $T$ and $i$th element of $\alpha_{k}$ satisfy
\[
[T]_i x_{k}^{(j)} = [\alpha_{k}]_i \ \forall i\in\R_j .
\]
Since $T$ is constant, 
the index set $\R_j$ associated with active inequalities at the vertex $x^{(j)}$ is
independent of $\alpha_{k}$ and can be computed offline. Therefore, for each $j \in \mathbb{N}_{[1, m]}$, we have
\begin{equation} \label{eq:X_defined_by_U}
x_{k}^{(j)} = U_j \alpha_{k},
\end{equation}
where the matrix $U_j\in\RR^{n_x\times n_{\alpha}}$ can be computed offline given knowledge of $\R_j$ using the property that
\begin{equation} \label{eq:define_U}
[T]_i U_j  = [\Id]_i \ \forall i \in \R_j , \ j\in\mathbb{N}_{[1,m]}.
\end{equation}
Using the vertex representation (\ref{eq:X_defined_by_U}), the condition that (\ref{eq:constraints}) is satisfied for all $x\in\X_k$ is equivalent to, for all $j \in \mathbb{N}_{[1, m]}$,
\begin{equation} \label{eq:input_state_constraint}
(F+GK) U_j \alpha_{k} + G v_{k} \leq \ones .
\end{equation}
 {Substituting $D(x,u)$ and $d(x,u)$ from (\ref{eq:Dk}), (\ref{eq:dk}), }
condition~(\ref{eq:recurrence_constraint}) can be expressed equivalently as
\[
T \bigl( D( U_{j}\alpha_{k}, K U_{j}\alpha_{k} + v_{k} )\theta + d( U_{j}\alpha_{k}, K U_{j}\alpha_{k} + v_{k} ) \bigr) + \bar{w}  \leq \alpha_{k+1}
\]
for all $\theta\in\Theta_t$, $j \in \mathbb{N}_{[1, m]}$ and $k\in\NN_{\geq 0}$.
This is equivalent~\cite[Prop.~3.31]{Blanchini} to the requirement that there exist matrices $\Lambda_{k,j}$ satisfying, for each prediction time step $k \in \mathbb{N}_{\geq0}$ and each 
vertex $j \in \mathbb{N}_{[1, m]}$, the conditions
\begin{subequations}\label{eq:update_constraint}
\begin{align}
&\Lambda_{k,j} M_{\Theta} = T D(U_j\alpha_{k},K U_j\alpha_{k} + v_{k} )\\
&\Lambda_{k,j} \mu_t \leq \alpha_{k+1} - T d( U_j\alpha_{k}, K U_j\alpha_{k} + v_{k} ) - \bar{w} \\
&\Lambda_{k,j} \geq 0 .
\end{align}
\end{subequations}
Given the dual mode predicted control law~(\ref{eq:input_law}), we introduce the terminal conditions that $\bigl(A(\theta)+B(\theta)K\bigr) x + w\in \X_N$ and $(F+GK)x \leq \ones$ for all $x\in\X_N$, $w\in\W$ and $\theta\in\Theta$. Then (\ref{eq:constraints}) and (\ref{eq:recurrence_constraint}) are satisfied if 
(\ref{eq:input_state_constraint}), (\ref{eq:update_constraint}) hold for all $j \in \mathbb{N}_{[1, m]}$ and $k \in \mathbb{N}_{[0,N-1]}$, and
there exist matrices $\Lambda_{N,j}$ satisfying the conditions, for all $j \in \mathbb{N}_{[1, m]}$
\begin{subequations}\label{eq:final_tube_condition}
\begin{align}
&(F+GK) U_j \alpha_{N}  \leq \ones\\
&\Lambda_{N,j} M_{\Theta} = T D(U_j\alpha_{N},K U_j\alpha_{N} )\\
&\Lambda_{N,j} \mu_t \leq \alpha_{N} - T d( U_j\alpha_{N}, K U_j\alpha_{N}) - \bar{w} \\
&\Lambda_{N,j} \geq 0 .
\end{align}
\end{subequations}

\subsection{Objective function}
Consider the nominal cost defined for $Q,R\succ 0$ by
\begin{equation} \label{eq:nominal_cost_function}
J(x,\mathbf{v},\bar{\theta}_t) =  \sum_{k = 0}^{\infty} (\|\bar{x}_{k} \|_Q^2 +\| \bar{u}_{k} \|_R^2), 
\end{equation}
where $\bar{x}_k$ and $\bar{u}_k$ are elements of predicted state and control sequences generated by a nominal parameter vector $\bar{\theta}_t$:
\begin{subequations} \label{eq:nominal_update}
	\begin{align}
	& \bar{x}_{0} = x \\
	&\bar{x}_{k+1} = \bigl(A(\bar{\theta}_t) +B(\bar{\theta}_t)K\bigr) \bar{x}_{k} + B(\bar{\theta}_t) v_{k} \\
	&\bar{u}_{k} = \blue{\begin{cases} K \bar{x}_{k}+{v}_{k} & k< N\\ K \bar{x}_k & k \geq N \end{cases}}
	\end{align}
\end{subequations}
 {for $k\in \NN_{>0}$} and where ${\bf v} = \{v_{0},\ldots,v_{N-1}\}$.
Define $P(\theta)$ as the solution of the Lyapunov matrix equation 
\begin{equation}\label{eq:lyap_mat}
P(\theta) - \Phi(\theta)^\top P(\theta) \Phi(\theta) = Q + K^\top R K
\end{equation}
where $\Phi(\theta) = A(\theta) + B(\theta)K$. Note that $P(\theta)\succ 0$ is well-defined for all  $\theta\in\Theta_0$ due to Assumption~\ref{ass:lambda_contractive}. 
Then (\ref{eq:nominal_cost_function}) is equivalent to
\begin{equation}\label{eq:nominal_dual_mode_cost}
J(x,\mathbf{v},\bar{\theta}_t)  =  \sum_{k = 0}^{N-1} (\|\bar{x}_{k} \|_Q^2 +\| \bar{u}_{k} \|_R^2) +\| \bar{x}_{N} \|_{P(\bar{\theta}_t)}^2 .
\end{equation}
We assume knowledge of an initial nominal parameter vector $\bar{\theta}_0\in\Theta_0$, which could be estimated using physical modeling or offline system identification, alternatively $\bar{\theta}_0$ could be defined as the Chebyshev centre of $\Theta_0$.
%
For $t>0$, we assume that $\bar{\theta}_t$ is updated by projecting $\bar{\theta}_{t-1}$ onto the parameter set estimate $\Theta_t$, i.e. 
\begin{equation} \label{eq:nominal_theta_update}
\bar{\theta}_{t} = \argmin_{\theta\in\Theta_t} \|\bar{\theta}_{t-1} - \theta\| .
\end{equation}
\begin{remark}
For the \blue{input-to-state stability} analysis in Section~\ref{sec:feasibility_stability} it is essential that $\bar{\theta}_t \in \Theta_t$. However, subject to this constraint, alternative update laws for $\bar{\theta}_t$ are possible; for example a  {Least Mean Squares (LMS) estimate} projected onto $\Theta_t$~\cite{Lorenzen2018}. 
\end{remark}

\subsection{Augmented objective function and persistent excitation}
\label{sec:pe_cost}
The regressor $D_t$ in (\ref{eq:Dk}) is persistently exciting (PE) if 
\begin{equation}\label{eq:twosided_pe_condition}
\beta_1 \Id \preceq  {\sum_{t=t_0}^{t_0+N_u-1} D_t^\top D_t }\preceq \beta_2 \Id 
\end{equation}
for some PE window $N_u \in\mathbb{N}_{>0}$, some $\beta_2\geq\beta_1>0$, and all times $t_0$~\cite{Narendra1987}. Although the upper bound in (\ref{eq:twosided_pe_condition}) implies convex constraints on $x_t$ and $u_t$, the lower bound is nonconvex. 
 {The bounds on convergence of the parameter set $\Theta_t$ derived in Section~\ref{sec:convergence_parameter} suggest faster convergence as $\beta_1$ in the PE condition~(\ref{eq:twosided_pe_condition}) increases.}

 {
Previously proposed MPC strategies that incorporate persistency of excitation constraints consider the PE condition to be defined on an interval such as $\{t-N_u+1,\ldots,t\}$, where $t$ is current time, which means that the PE constraint depends on  only the first element of the predicted control sequence. Marafioti et al.~\cite{Marafioti2014} simplify the PE condition by expressing it as a nonconvex quadratic inequality in $u_t$. Likewise, Lorenzen et al.~\cite{Lorenzen2018} show that the PE condition is equivalent to a nonconvex constraint on the current control input.
Lu and Cannon \cite{Lu2019} linearize the PE condition about a reference trajectory and thus obtain a sufficient condition for persistent excitation.}

 {In this paper, on the hand, we define the PE condition} over predicted trajectories (from $k = 0$ to $k=N_u -1$ steps ahead), and we therefore
require, at time $t$ and for some $\beta_1 > 0$,
\begin{equation}\label{eq:PE_condition}
 {\sum_{k = 0}^{N_u -1} D_{k|t}^\top D_{k|t} \succeq  \beta_1 \Id}.
\end{equation}
 {The inclusion of predicted future states and control inputs in this PE condition allows for greater flexibility in meeting the constraint.}
To avoid nonconvex constraints, we derive a convex relaxation that provides a sufficient condition for (\ref{eq:PE_condition}).

Assume reference state and control predicted sequences,
$\hat{\mathbf{x}} = \{\hat{x}_0,\ldots \hat{x}_{N}\}$ and
$\hat{\mathbf{u}} = \{\hat{u}_0,\ldots \hat{u}_{N-1}\}$, 
approximating the optimal predicted state and control sequences, are available. To derive sufficient conditions for (\ref{eq:PE_condition}), we consider the difference between the reference and optimized sequences, denoted
$\tilde{u}_k$ and $\tilde{x}_k$ (i.e.  $\tilde{u}_k  = u_k - \hat{u}_k  { = K x_k -\hat{u}_k}$ and $\tilde{x}_k  = x_k - \hat{x}_k$). Since the prediction tube implies $x_k \in \X_k$, we therefore have $\tilde{x}_k \in \tilde{\X}_k$ where $\tilde{\X}_k =\X_k \oplus -\hat{x}_k $. Denote the vertices of $\tilde{\X}_k$ as $\tilde{x}_k^{(j)}$ and for $ j \in \NN_{[1, m]}$, we have
\begin{equation*}
\tilde{x}_k^{(j)}= {x}^{(j)}_{k}- \hat{x}_{k} = U_j \alpha_k - \hat{x}_k.
\end{equation*}
%
Moreover $D_k$ depends linearly on ${u}_k$ and ${x}_k$, and hence for $ j \in \NN_{[1, m]}$,
\begin{multline*}
 {D(x_k^{(j)}, u_k^{(j)})^\top D(x_k^{(j)}, u_k^{(j)}) }
= D(\hat{x}_k,\hat{u}_k)^\top D(\hat{x}_k,\hat{u}_k)  +D(\tilde{x}_k^{(j)}, \tilde{u}_k^{(j)}) ^\top  D(\hat{x}_k,\hat{u}_k)
\\
+ D(\hat{x}_k,\hat{u}_k)^\top D(\tilde{x}_k^{(j)}, \tilde{u}_k^{(j)})  
+D(\tilde{x}_k^{(j)}, \tilde{u}_k^{(j)})^\top D(\tilde{x}_k^{(j)}, \tilde{u}_k^{(j)}) .
\end{multline*}
Here $D(\tilde{x}_k^{(j)}, \tilde{u}_k^{(j)})^\top D(\tilde{x}_k^{(j)}, \tilde{u}_k^{(j)}) $ is a positive semidefinite matrix, and by omitting this term we obtain sufficient conditions for (\ref{eq:PE_condition}) as a set of LMIs in $\alpha_k$ and $v_k$. 
{The following convex conditions are thus sufficient to ensure (\ref{eq:PE_condition}) whenever $\beta \geq \beta_1$}
 {
\begin{subequations}\label{eq:PE_constraint}
\begin{align}
D(\hat{x}_k,\hat{u}_k)^\top D(\hat{x}_k,\hat{u}_k)  
+&D(U_j \alpha_k - \hat{x}_k, KU_j \alpha_k + v_k - \hat{u}_k) ^\top  D(\hat{x}_k,\hat{u}_k) \nonumber
\\
& +   D(\hat{x}_k,\hat{u}_k)^\top D(U_j \alpha_k - \hat{x}_k, KU_j\alpha_k + v_k - \hat{u}_k)
\succeq  M_k,  
\ \ \forall{j \in \NN_{[1,m]}}, \ \forall{k \in \NN_{[0,N_u-1]}}
\\
&\qquad \qquad \qquad \qquad\qquad \qquad \qquad \quad  
\sum_{k = 0}^{N_u-1} M_k  \succeq  \beta \Id
\end{align}
\end{subequations}
where $M_k \in \RR^{p\times p}\ \forall{k \in \NN_{[0,N_u-1]}}$ are intermediate variables. 
}

 {
Another innovation of this paper is the inclusion of PE coefficient in the cost function. 
Previous approaches~\cite{Lorenzen2018,Lu2019} face the difficulty of choosing a suitable $\beta$ value for the PE constraint in the implementation. A larger value of $\beta$ is generally desirable, but a large $\beta$ might make the optimisation problem with the PE condition infeasible. }
 {In this paper we incentivize a large value of $\beta$ by modifying the MPC objective function as follows}
\begin{equation} \label{eq:cost_J2}
\sum_{k = 0}^{N-1} (\|\bar{x}_{k} \|_Q^2 +\| \bar{u}_{k} \|_R^2) +\| \bar{x}_{N} \|_{P(\bar{\theta}_t)}^2 - \gamma \beta 
\end{equation}
where $\gamma\geq 0$ is a weight that controls the relative priority given to satisfaction of the PE condition (\ref{eq:PE_condition}) and tracking performance.  {This modification does not affect the feasibility of the optimisation. }

 {
Although incorporating a condition such as (\ref{eq:PE_condition}) or the convex relaxation (\ref{eq:PE_constraint})-(\ref{eq:cost_J2}) into a MPC strategy does not ensure that the closed-loop system satisfies a corresponding PE condition, its effect on the convergence rate of the estimated parameter set is significant, as shown in the numerical example in Section \ref{sec:example}.
}
 {
Also, $\gamma$ is a meaningful and straightforward coefficient to tune, 
and the PE constraint can be easily switched off by setting $\gamma = 0$.
}



\subsection{Proposed algorithm} \label{subsec:proposed_algorithm}
\textbf{Offline:}
\begin{enumerate}
\item Choose suitable $T$ defining the predicted state tube and compute the corresponding $U_j$ in (\ref{eq:define_U}).
\item Obtain a nominal $\bar{\theta}_0$. 
\item 
Minimise the contracitivity factor $\lambda$ satisfying (\ref{eq:lambda}) and obtain a feedback gain $K$.
\end{enumerate}

\noindent\textbf{Online:} For $t = 0,1,2,\dots$
\begin{enumerate}
\item Obtain the current state $x_t$ and set $x=x_t$.
\item Update $\Theta_{t} = \{\theta: M_{\Theta} \theta \leq \mu_{t} \}$ using (\ref{eq:fixed_set_update}) and the nominal parameter vector $\bar{\theta}_t$ using (\ref{eq:nominal_theta_update}), and solve (\ref{eq:lyap_mat}) for $P(\bar\theta_t)$.
\item 
At $t = 0$ compute the initial reference state and control sequences $\mathbf{\hat{x}} = \{\hat{x}_0,\dots, \hat{x}_{N} \}$ and $\hat{\mathbf{u}} = \{\hat{u}_0,\dots, \hat{u}_{N-1} \}$, for example by solving the nominal problem described in Remark \ref{rem:ref_initial}.
\\
At $t > 0$ 
compute the reference sequences $\mathbf{\hat{x}} $ and $\hat{\mathbf{u}} $, using the solution $\mathbf{v}_{t-1}^\ast=\{v^\ast_{0|t-1} , \ldots, v^\ast_{N-1|t-1}\}$ at $t-1$, and 
\begin{equation}
\begin{aligned}
&\hat{x}_0 = x\\
& \hat{x}_{k+1} = A(\bar{\theta}_{t})\hat{x}_k +B(\bar{\theta}_t)\hat{u}_k\\
& \hat{u}_{k} = K\hat{x}_{k}+\hat{v}_k \\
&\hat{v}_k = \begin{cases} v^\ast_{k+1|t-1} & k = 0, \dots, N-2\\ 0. & k = N-1\end{cases}
\end{aligned}
\end{equation}
\item 
Compute $\mathbf{v}^\ast_t = \{v_0^\ast,\dots, v_{N-1}^\ast \}$, $\boldsymbol{\alpha}^\ast_t = \{\alpha^\ast_0,\dots, \alpha^\ast_{N} \}$, $\bar{\mathbf{x}}^\ast_t = \{\bar{x}_0 ^\ast,\dots, \bar{x}_{N}^\ast \}$, $\bar{\mathbf{u}}^\ast_t = \{\bar{u}_0 ^\ast,\dots, \bar{u}_{N-1}^\ast \}$, $\beta^\ast_t$, {$\boldsymbol{\Lambda}^\ast = \{\Lambda_{k,j}, \ \forall\ k\in \NN_{[0,\dots,N]}, j\in \NN_{[1,\dots,p]} \}$} the solution of the semidefinite program (SDP):
\begin{equation*}
\mathcal{P}: 
\min_{\mathbf{v}, \boldsymbol{\alpha}, \bar{\mathbf{x}}, \bar{\mathbf{u}}, \beta {, \boldsymbol{\Lambda}}} \sum_{k = 0}^{N-1} (\|\bar{x}_{k} \|_Q^2 +\| \bar{u}_{k} \|_R^2) +\| \bar{x}_{N} \|_{P(\bar\theta_t)}^2 - \gamma \beta
\ \ \text{subject to (\ref{eq:initial_con}), (\ref{eq:input_state_constraint}),  (\ref{eq:update_constraint}),  (\ref{eq:final_tube_condition}), (\ref{eq:nominal_update}),  (\ref{eq:PE_constraint}). }
\end{equation*}
\item Implement the current control input $u_t = Kx_t +v_{0}^\ast$. 
\end{enumerate}
 {
\vspace{-5mm}
\begin{remark} \label{rem:T}
In \blue{offline} step 1, $T$ can be chosen so that the polytope $\X=\{x: Tx \leq \ones \}$ in Assumption~\ref{ass:lambda_contractive} approximates a Robust Control Invariant (RCI) set of the form $\{x: x^{\top} \tilde{P} x \leq\ones\}$, where $\tilde{P}=\tilde{P}^\top\succ 0$ satisfies $\tilde{P} - (A(\theta)+B(\theta)\tilde{K})^\top \tilde{P} (A(\theta)+B(\theta)\tilde{K}) \succ 0$ for some $\tilde{K}\in\mathbb{R}^{n_u\times n_x}$. 
Using the vertex representation of $\Theta_0$, 
the matrix $\tilde{P}$ can be computed by solving a semidefinite program~\cite[Chap.~5]{boyd94}. This approach allows the number of rows in $T$ to be specified by the designer. However, $T$ can alternatively be chosen so that $\X$ approximates the minimal Robust Positive Invariant (RPI) set or the maximal RPI set for the system (\ref{eq:update_equation})-(\ref{eq:constraints})
under a specified stabilizing feedback law~\cite{Blanchini}.
\end{remark}}
 {
\vspace{-5mm}
\begin{remark}
In \blue{offline} step 3, the computation of $\min_K \lambda$ subject to  (\ref{eq:lambda}) for given $T$ can be performed by solving a LP using the vertex representation of $\Theta_0$~\cite[Chap.~7]{Blanchini}. The objective of minimizing $\lambda$ is chosen to make the constraints of problem~$\mathcal{P}$ easier to satisfy. In particular, choosing $K$ so that $\lambda< 1$ in (\ref{eq:lambda}) ensures that $\alpha_N$ exists satisfying the terminal constraints (\ref{eq:final_tube_condition}b-d).
\end{remark}
}
\begin{remark} \label{rem:ref_initial}
At $t=0$, the reference sequences $\mathbf{\hat{x}} = \{\hat{x}_0,\dots, \hat{x}_{N} \}$ and $\hat{\mathbf{u}} = \{\hat{u}_0,\dots, \hat{u}_{N-1} \}$ may be computed by solving
\begin{equation} \label{eq:opt_algorithm1}
\min_{\mathbf{\hat{v}}}  \sum_{k = 0}^{N-1} (\|\hat{x}_{k} \|_Q^2 +\| \hat{u}_{k} \|_R^2) +\| \hat{x}_{N} \|_{P(\bar\theta_0)}^2 
\ \ \text{subject to } \ \ 
\begin{aligned}[t]
&\hat{x}_0 = x_0\\
&\hat{x}_{k+1} = A(\bar{\theta}_0) \hat{x}_{k} +B(\bar{\theta}_0) \hat{u}_{k}  \\
&\hat{u}_{k} = K \hat{x}_{k}+\hat{v}_{k}\\
&F\hat{x}_{k} +G\hat{u}_{k}\leq 1 
\end{aligned}
\end{equation}
\end{remark}

\begin{remark}\label{rem:periodic_update}
The online computation of the proposed algorithm may be reduced by updating $\Theta_t$ only once every $N_u>1$ time steps. For example, in Step 2, set $\Theta_t = \bigcap_{j=t-N_u+1}^{t} \Delta_j \cap \Theta_{t-N_u}$ for $t\in\{N_u,2N_u,\ldots\}$ and $\Theta_t = \Theta_{t-1}$ at all times $t\notin\{N_u,2N_u,\ldots\}$. 
\end{remark}


In Section~\ref{sec:feasibility_stability} we use the property that $\Theta_{t} \subseteq \Theta_{t-1}$ to show that the solution, $\mathbf{v}^\ast_{t-1}$, of $\mathcal{P}$ at time $t-1$ forms part of a feasible solution of $\mathcal{P}$ at time $t$. As a result, the reference sequences $\mathbf{\hat{x}}$, $\mathbf{\hat{u}}$ in Step 3 are feasible for problem $\mathcal{P}$ at all times $t>0$.

\section{Recursive Feasibility and Stability} \label{sec:feasibility_stability}
\subsection{Recursive feasibility}
At time $t\geq 1$, let a suboptimal set of decision variables, denoted $(\hat{\mathbf{v}}_t,\hat{\boldsymbol{\alpha}}_t)$ be defined in terms of the optimal solution $(\mathbf{v}^\ast_{t-1}, \boldsymbol{\alpha}^\ast_{t-1})$ of $\mathcal{P}$ at time $t-1$ by
\[
\hat{\mathbf{v}}_t = \{v^\ast_{1|t-1}, \dots, v^\ast_{N-1|t-1}, 0 \}, 
\quad
\hat{\boldsymbol{\alpha}}_t = \{\alpha^\ast_{1|t-1}, \dots, \alpha^\ast_{N|t-1}, \alpha^\ast_{N|t-1}\}.
\]

\begin{proposition} [Recursive Feasibility] \label{prop:recursive}
The online MPC optimization $\mathcal{P}$ is feasible at all times $t  \in \NN_{> 0}$ if $\mathcal{P}$ is feasible at $t = 0$ and $\Theta_{t}\subseteq \Theta_{t-1}$ for all time $t$.
\end{proposition}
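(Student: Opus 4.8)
### Proof Proposal

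The plan is to exhibit a feasible candidate solution for $\mathcal{P}$ at time $t$ by shifting the optimal solution from time $t-1$, and to verify each constraint class in turn. Concretely, I would set $\hat{\mathbf{v}}_t = \{v^\ast_{1|t-1}, \dots, v^\ast_{N-1|t-1}, 0\}$ and $\hat{\boldsymbol{\alpha}}_t = \{\alpha^\ast_{1|t-1}, \dots, \alpha^\ast_{N|t-1}, \alpha^\ast_{N|t-1}\}$ as in the setup preceding the proposition, define the shifted multipliers $\hat{\Lambda}_{k,j} = \Lambda^\ast_{k+1,j}$ for $k\in\NN_{[0,N-2]}$ and $\hat{\Lambda}_{N-1,j}=\hat{\Lambda}_{N,j}=\Lambda^\ast_{N,j}$, and let the nominal state/control sequences $\bar{\mathbf{x}}_t,\bar{\mathbf{u}}_t$ be regenerated from $\hat{\mathbf{v}}_t$ via (\ref{eq:nominal_update}) with the updated $\bar\theta_t$ (so (\ref{eq:nominal_update}) holds by construction). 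The key structural facts I will lean on are: (i) $\Theta_t \subseteq \Theta_{t-1}$, which is the stated hypothesis (and follows from Lemma~\ref{lem:monotonic_param_set}); and (ii) the tube cross-sections $\X^\ast_{k|t-1}$ from time $t-1$ form, after a one-step shift, a valid tube for the dynamics at time $t$.

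First I would check the tube recursion and terminal constraints (\ref{eq:update_constraint}), (\ref{eq:final_tube_condition}). For $k\in\NN_{[0,N-2]}$ the shifted inequalities are literally the time-$(t-1)$ inequalities at index $k+1$, except that $\mu_t$ replaces $\mu_{t-1}$ on the right-hand side of (\ref{eq:update_constraint}b); since $\Lambda^\ast_{k+1,j}\geq 0$ and $\Theta_t\subseteq\Theta_{t-1}$ implies (by Farkas/polyhedral containment, or simply because $\Lambda_{k,j}\mu$ is the support-like bound) $\Lambda^\ast_{k+1,j}\mu_t \leq \Lambda^\ast_{k+1,j}\mu_{t-1}$ — more carefully, containment of polytopes gives that any nonnegative combination of rows of $M_\Theta$ that upper-bounds a linear functional over $\Theta_{t-1}$ also upper-bounds it over $\Theta_t$ — the shifted constraint still holds. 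For the new terminal slot $k=N-1$ and $k=N$, both $\alpha_{N-1}$ and $\alpha_N$ equal $\alpha^\ast_{N|t-1}$ and $v_{N-1}=0$, so (\ref{eq:update_constraint}) at $k=N-1$ and (\ref{eq:final_tube_condition}) both reduce to the time-$(t-1)$ terminal conditions (\ref{eq:final_tube_condition}) with $\mu_{t-1}$ relaxed to $\mu_t$, which hold for the same reason. The initial condition (\ref{eq:initial_con}), $Tx_t\leq\alpha_{0|t}=\alpha^\ast_{1|t-1}$, holds because $x_t = \Phi(\theta^\ast)x_{t-1}+B(\theta^\ast)v^\ast_{0|t-1}+w_{t-1}$ with $\theta^\ast\in\Theta_t\subseteq\Theta_{t-1}$ and $w_{t-1}\in\W$, so $x_t$ lies in $\X^\ast_{1|t-1}$ by the time-$(t-1)$ recursion (\ref{eq:recurrence_constraint}). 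The pure state/input constraints (\ref{eq:input_state_constraint}) are again just shifted copies.

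Finally I would handle the PE constraints (\ref{eq:PE_constraint}) and the objective. The PE constraints involve the reference sequences $\hat{\mathbf{x}},\hat{\mathbf{u}}$, which by the algorithm's Step 3 are themselves the shifted time-$(t-1)$ optimal trajectories padded with a zero; I would take the candidate $\beta$ and $M_k$ to be the shift of the time-$(t-1)$ optimal $M^\ast_k$ (with the last slot handled analogously to the tube terminal slot), so that (\ref{eq:PE_constraint}a) at each $k$ and the summed condition (\ref{eq:PE_constraint}b) carry over; since (\ref{eq:PE_constraint}) is an LMI system with the same structure and the reference has been shifted consistently, feasibility is preserved — and in any case, as the paper remarks, a feasible $\beta$ (e.g. corresponding to the convex relaxation being vacuous, small $\beta$) always exists, so the PE block never destroys feasibility. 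The objective is finite because $P(\bar\theta_t)$ is well-defined ($\bar\theta_t\in\Theta_t\subseteq\Theta_0$, Assumption~\ref{ass:lambda_contractive}).

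The main obstacle I anticipate is the bookkeeping in the step "$\Theta_t\subseteq\Theta_{t-1}$ implies the shifted multiplier inequalities still hold": one must argue cleanly that replacing $\mu_{t-1}$ by $\mu_t$ on the right-hand side of (\ref{eq:update_constraint}b) and (\ref{eq:final_tube_condition}c) only tightens (lowers) the right-hand side in the correct direction, i.e. that $\Lambda\geq0$ and $\Theta(\mu_t)\subseteq\Theta(\mu_{t-1})$ together give $\max_{\theta\in\Theta(\mu_t)}\Lambda M_\Theta\theta \leq \max_{\theta\in\Theta(\mu_{t-1})}\Lambda M_\Theta\theta \leq \Lambda\mu_{t-1}$, hence $\leq$ the same bound; phrased via the LP/duality characterization of polytope inclusion from \cite{Blanchini} this is immediate, but it is the one place where the containment hypothesis is genuinely used rather than being a formal shift. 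A secondary subtlety is confirming that the terminal padding $\alpha_{N|t}=\alpha^\ast_{N|t-1}$, $v_{N-1|t}=0$ satisfies the terminal conditions (\ref{eq:final_tube_condition}) rather than merely the recursion — this is exactly where Assumption~\ref{ass:lambda_contractive} (existence of a $\lambda$-contractive $\X$) and the construction of $\alpha_N$ in the offline design are invoked.
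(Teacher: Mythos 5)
Your proposal is correct and follows essentially the same route as the paper: shift the time-$(t-1)$ optimal solution $(\hat{\mathbf{v}}_t,\hat{\boldsymbol{\alpha}}_t)$, use $x_t\in\X_{1|t-1}$ for (\ref{eq:initial_con}), use $\Theta_t\subseteq\Theta_{t-1}$ (via the polytope-containment/duality equivalence behind (\ref{eq:update_constraint})) for the shifted tube and terminal constraints, and observe that (\ref{eq:nominal_update}) and (\ref{eq:PE_constraint}) are always satisfiable for some $\beta$. Your closing remarks are accurate refinements rather than deviations (the only slight imprecision is attributing the terminal padding to Assumption~\ref{ass:lambda_contractive}, whereas it follows directly from $\alpha^\ast_{N|t-1}$ satisfying (\ref{eq:final_tube_condition}) at time $t-1$), so the argument matches the paper's proof.
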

\begin{proof}
If $\mathcal{P}$ is feasible at $t -1$, then at time $t$,  $(\mathbf{v},\boldsymbol{\alpha}) = (\hat{\mathbf{v}}_t,\hat{\boldsymbol{\alpha}}_t)$ is:
feasible for (\ref{eq:initial_con}) because $x_t \in \X_{1|t-1}$; 
feasible for (\ref{eq:input_state_constraint}) and (\ref{eq:update_constraint}) for $k \in \NN_{[0, N-2]}$ because $(\mathbf{v},\boldsymbol{\alpha}) = (\mathbf{v}^\ast_{t-1},\boldsymbol{\alpha}^\ast_{t-1})$ is feasible for  (\ref{eq:input_state_constraint}) and (\ref{eq:update_constraint}) for $k \in \NN_{[1, N-1]}$ and $\Theta_{t}\subseteq \Theta_{t-1}$;
and feasible for (\ref{eq:input_state_constraint}) and (\ref{eq:update_constraint}) for $k = N-1$ and feasible for (\ref{eq:final_tube_condition})
because $\alpha_N= \alpha^\ast_{N|t-1}$ is feasible for (\ref{eq:final_tube_condition}) and $\Theta_{t}\subseteq \Theta_{t-1}$.
%
%
Finally, we note that (\ref{eq:nominal_update}) is necessarily feasible and (\ref{eq:PE_constraint}) necessarily holds for some scalar $\beta$ if $(\mathbf{v},\boldsymbol{\alpha}) = (\hat{\mathbf{v}}_t,\hat{\boldsymbol{\alpha}}_t)$.
%
\end{proof}

\subsection{Input-to-state stability (ISS)}
Throughout this section we set $\gamma = 0$ in problem $\mathcal{P}$. Therefore the objective of $\mathcal{P}$ is $J(x, \mathbf{v}, \bar{\theta}_t)$ where $J$ is the nominal cost (\ref{eq:nominal_dual_mode_cost}). \blue{As a result of parameter adaption, the change of $\bar{\theta}_t$ online might increase the cost, but this is absorbed in the ISS terms.}
%
To simplify notation we define a stage cost $L(x,v)$ and terminal cost $\phi(x,\theta)$ as $L(x,v)  = \|{x} \|_Q^2 +\| {Kx+v}\|_R^2$ and $\phi(x,\theta) = \|x \|^2_{P(\theta)}$ so that (\ref{eq:nominal_dual_mode_cost}) is equivalent to
\[
J(x,\mathbf{v},\bar{\theta}_t) = \sum_{k = 0}^{N-1} L(\bar{x}_{k}, \bar{u}_{k}) + \phi(\bar{x}_{N},\bar{\theta}_t).
\]
%
Denoting the actual state at next time step as $x^+$, we define the function $ f(x,v,w,\theta^\ast)$ as
\[
x^+ = f(x,v,w,\theta^\ast)=  \bigl(A(\theta^\ast) +B(\theta^\ast)K\bigr) x + B(\theta^\ast)v + w,
\]
so that $x_{t+1} = f(x_t,v_t,w_t,\theta^\ast)$.



\begin{lemma}[ISS-Lyapunov function~\cite{Limon2009}] \label{lem:ISS_condition}
The system
\begin{equation} \label{eq:update_nonlinear}
x^+ = f(x,v,w,\theta),
\end{equation}
with control law $v=v(x,\btheta_t,t)$ is ISS with region of attraction $ {\mathcal{R} \subseteq \RR^{n_x}}$ if the following conditions are satisfied.\\
(i).~
$ {\mathcal{R}}$ contains the origin in its interior, \red{is compact,} and is a robust positively invariant set for (\ref{eq:update_nonlinear}), i.e. $f\bigl(x,v(x,\btheta_t,t),w,\theta\bigr) \in  {\mathcal{R}}$ for all $x\in  {\mathcal{R}}$, $w\in \mathcal{W}$, $ \theta \in \Theta$ and $t\in\NN_{\geq 0}$. \\
(ii).~
There exist $\mathcal{K}_\infty$- functions $ \varsigma_1, \varsigma_2,  \varsigma_3$, $\mathcal{K}$-functions $\sigma_1$, $\sigma_2$ and a 
function $\mathcal{V}:  {\mathcal{R}} \times \NN_{\geq 0} \rightarrow \RR_{\geq 0}$ such that for all $t\in\NN_{\geq 0}$, $\mathcal{V}(\cdot,t)$ is continuous, and for all $(x,t)\in {\mathcal{R}}\times\NN_{\geq 0}$,%
\begin{gather}
\varsigma_1(\|x \|) \leq \mathcal{V}(x,t) \leq  \varsigma_2(\| x\|)\label{eq:ISS_condition_1}, \\
\mathcal{V}(x^+,t+1)-\mathcal{V}(x,t) \leq - \varsigma_3(\|x\|)+\sigma_1(\|w\|) +\sigma_2(\| \bar{\theta}_t - \theta^\ast \|) \label{eq:ISS_condition_2}.
\end{gather}
\end{lemma}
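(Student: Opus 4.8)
The plan is to recognise Lemma~\ref{lem:ISS_condition} as the discrete-time, time-varying version of the classical result that the existence of an ISS-Lyapunov function implies input-to-state stability (cf.~\cite{Limon2009}), restricted to the region of attraction $\mathcal{R}$. First I would exploit condition~(i): since $\mathcal{R}$ is robustly positively invariant for~(\ref{eq:update_nonlinear}) under the control law $v=v(x,\btheta_t,t)$, any closed-loop trajectory starting in $\mathcal{R}$ stays in $\mathcal{R}$ for all $t$ and all admissible disturbance and parameter realisations, so $\mathcal{V}(x_t,t)$ is well defined along the trajectory and~(\ref{eq:ISS_condition_1})--(\ref{eq:ISS_condition_2}) apply at every step. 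Compactness of $\mathcal{R}$, $\mathcal{W}$ and $\Theta_0$ then ensures that $\bar\delta:=\sup_{k\geq0}\bigl(\sigma_1(\|w_k\|)+\sigma_2(\|\btheta_k-\theta^\ast\|)\bigr)$ is finite.

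Next I would collapse~(\ref{eq:ISS_condition_2}) to a scalar difference inequality. Writing $v_t:=\mathcal{V}(x_t,t)$ and $\delta_t:=\sigma_1(\|w_t\|)+\sigma_2(\|\btheta_t-\theta^\ast\|)$, the right-hand bound in~(\ref{eq:ISS_condition_1}) gives $\|x_t\|\geq\varsigma_2^{-1}(v_t)$, so~(\ref{eq:ISS_condition_2}) becomes
\[
v_{t+1}\leq v_t-\alpha(v_t)+\delta_t,\qquad \alpha:=\varsigma_3\circ\varsigma_2^{-1}\in\mathcal{K}_\infty .
\]
After the standard replacement of $\alpha$ by a smaller $\mathcal{K}_\infty$ function for which $\mathrm{id}-\tfrac12\alpha$ is again $\mathcal{K}_\infty$, a two-regime argument applies: while $\alpha(v_t)\geq2\delta_t$ one has $v_{t+1}\leq v_t-\tfrac12\alpha(v_t)=:\varrho(v_t)$ with $\varrho\in\mathcal{K}_\infty$, $\varrho(s)<s$; and if $\alpha(v_t)<2\delta_t$ then $v_{t+1}$ is bounded by a $\mathcal{K}_\infty$ function of $\delta_t$. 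Together these give $v_{t+1}\leq\max\{\varrho(v_t),\hat\rho(\bar\delta)\}$ for some $\hat\rho\in\mathcal{K}_\infty$, whence the discrete-time comparison lemma yields $v_t\leq\max\{\beta(v_0,t),\hat\rho(\bar\delta)\}\leq\beta(v_0,t)+\hat\rho(\bar\delta)$ for some $\beta\in\mathcal{K}\mathcal{L}$. Finally, inverting~(\ref{eq:ISS_condition_1}) through $\varsigma_1(\|x_t\|)\leq v_t$ and $v_0\leq\varsigma_2(\|x_0\|)$, using $\sigma(a+b)\leq\sigma(2a)+\sigma(2b)$ for $\mathcal{K}$-functions, and $\bar\delta\leq\sigma_1(\sup_k\|w_k\|)+\sigma_2(\sup_k\|\btheta_k-\theta^\ast\|)$, one reaches a bound of the form
\[
\|x_t\|\leq\varsigma_1^{-1}\!\bigl(2\beta(\varsigma_2(\|x_0\|),t)\bigr)+\rho_w\!\bigl(\textstyle\sup_{k\geq0}\|w_k\|\bigr)+\rho_\theta\!\bigl(\textstyle\sup_{k\geq0}\|\btheta_k-\theta^\ast\|\bigr)
\]
for suitable $\mathcal{K}$-functions $\rho_w,\rho_\theta$, which is exactly an ISS estimate with a $\mathcal{K}\mathcal{L}$ term in the initial state and $\mathcal{K}$ gains on the disturbance and parameter-error sequences.

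I expect the main obstacle to be the comparison step together with the uniform handling of the time dependence of $\mathcal{V}$: one must verify that $\alpha$ can be modified so that $\varrho=\mathrm{id}-\tfrac12\alpha$ is genuinely a $\mathcal{K}_\infty$ function with $\varrho(s)<s$ for $s>0$, and that the resulting $\mathcal{K}\mathcal{L}$ bound $\beta(v_0,t)$ is independent of the time argument entering $\mathcal{V}(\cdot,t)$ — this is precisely where the uniformity in $t$ of the class-$\mathcal{K}$ and class-$\mathcal{K}_\infty$ bounds postulated in~(ii) is used. Since the statement is classical, an acceptable alternative is simply to invoke~\cite{Limon2009}; the argument above is the self-contained route.
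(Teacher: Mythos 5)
The paper itself contains no proof of this lemma: it is imported verbatim (as a known result) from Limon et al.~\cite{Limon2009}, and the paper's ``proof'' is the citation, so there is nothing internal to compare against. Your self-contained sketch is the standard discrete-time ISS-Lyapunov argument (Jiang--Wang style) and is essentially correct: invariance of $\mathcal{R}$ keeps the trajectory where (\ref{eq:ISS_condition_1})--(\ref{eq:ISS_condition_2}) apply; the sandwich bound turns the decrease condition into $v_{t+1}\leq v_t-\alpha(v_t)+\delta_t$ with $\alpha=\varsigma_3\circ\varsigma_2^{-1}\in\mathcal{K}_\infty$; the two-regime split plus the comparison lemma gives $v_t\leq\max\{\beta(v_0,t),\hat\rho(\bar\delta)\}$; and inverting $\varsigma_1$ with the weak triangle inequality yields the ISS estimate with separate $\mathcal{K}$ gains on $\|w\|$ and $\|\btheta_t-\theta^\ast\|$, matching the form stated in the paper's Remark. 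Two technical points you flag correctly but should state precisely if you write this out: (a) the modification of $\alpha$ is the standard lemma that for any $\alpha\in\mathcal{K}_\infty$ there is $\tilde\alpha\in\mathcal{K}_\infty$, $\tilde\alpha\leq\alpha$, with $\mathrm{id}-\tilde\alpha$ continuous and nondecreasing (your phrasing ``$\mathrm{id}-\tfrac12\alpha$ is again $\mathcal{K}_\infty$'' is not quite the right property, but the intended fix is the standard one); and (b) your gain uses $\sup_{k\geq0}$ of the disturbance and parameter-error sequences, whereas the paper's ISS bound uses $\max_{k\in\NN_{[0,t-1]}}$ --- by causality the comparison argument in fact only involves $\delta_0,\ldots,\delta_{t-1}$, so the sharper truncated form follows from the same computation, and the time-variation of $\mathcal{V}$ is harmless precisely because $\varsigma_1,\varsigma_2,\varsigma_3,\sigma_1,\sigma_2$ are uniform in $t$, as you note. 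Invoking \cite{Limon2009} directly, as the paper does, is of course also acceptable.
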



In the following we define $X_\mathcal{P}$ as the set of states $x$ such that problem $\mathcal{P}$ is feasible and assume that $X_\mathcal{P}$ is non-empty. 
In addition, for a given state $x$, nominal parameter vector $\bar{\theta}$ and parameter set $\Theta$, we denote $\mathbf{v}^\ast(x,\bar{\theta},\Theta)$ as the optimal solution of problem $\mathcal{P}$, and let $V^\ast(x,\bar{\theta},\Theta)$ be the corresponding optimal value of the cost in (\ref{eq:nominal_dual_mode_cost}), so that $V^\ast(x,\bar{\theta},\Theta) = J(x, \mathbf{v}^\ast(x,\bar{\theta},\Theta),\bar{\theta})$.

\begin{theorem} \label{thm:ISS_theta}
Assume that $\gamma = 0$ and the nominal parameter vector $\bar{\theta}_t$ 
is not updated, i.e.~$\bar{\theta}_t = \bar{\theta}_0$ for all $t\in\NN_{\geq 0}$.
Then for all initial conditions $x_0 \in X_\mathcal{P}$, the system (\ref{eq:update_equation}) with control law $u_t = Kx_t + v_{0|t}^\ast$, where $v_{0|t}^\ast$ is the first element of $\mathbf{v}^\ast(x_t, \btheta_t,\Theta_t)$,
robustly satisfies the  constraint (\ref{eq:constraints})
and is ISS with region of attraction $X_\mathcal{P}$.
\end{theorem}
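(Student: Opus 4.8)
The strategy is to verify the hypotheses of Lemma~\ref{lem:ISS_condition} (the ISS-Lyapunov characterization of~\cite{Limon2009}) with the candidate function $\mathcal{V}(x,t) = V^\ast(x,\btheta_0,\Theta_t)$, the optimal value of problem $\mathcal{P}$ with $\gamma=0$. Robust constraint satisfaction and the invariance part of condition~(i) follow immediately: Proposition~\ref{prop:recursive} together with Lemma~\ref{lem:monotonic_param_set} (which gives $\Theta_t\subseteq\Theta_{t-1}$, hence recursive feasibility) shows that if $x_0\in X_\mathcal{P}$ then $x_t\in X_\mathcal{P}$ for all $t$, and the tube constraints~(\ref{eq:initial_con}), (\ref{eq:input_state_constraint}), (\ref{eq:update_constraint}), (\ref{eq:final_tube_condition}) enforce $Fx_t+Gu_t\le\ones$ for the true $\theta^\ast\in\Theta_t$. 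Compactness of $X_\mathcal{P}$ follows from Assumption~\ref{ass:compact_feasible_region} (the constraint set $\mathcal{Z}$ is compact), and the origin lies in its interior because $\mathbf{v}=0$, $\alpha_k$ chosen to describe the $\lambda$-contractive set $\X$ (scaled appropriately) is feasible near $x=0$ thanks to Assumption~\ref{ass:lambda_contractive} and $0\in\interior\mathcal{Z}$.

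For condition~(ii), first I would establish the sandwich bound~(\ref{eq:ISS_condition_1}). The lower bound $\varsigma_1(\|x\|)=\lambda_{\min}(Q)\|x\|^2 \le L(x,v)\le V^\ast$ is immediate since $\bar x_0=x$ and $Q\succ 0$. The quadratic upper bound $V^\ast(x,\btheta_0,\Theta_t)\le\varsigma_2(\|x\|)$ on the compact set $X_\mathcal{P}$ is the standard MPC argument: exhibit a feasible $\mathbf{v}$ whose cost is bounded by a quadratic in $\|x\|$; since $K$ robustly stabilizes (Assumption~\ref{ass:lambda_contractive}), for $x$ in a neighborhood of the origin $\mathbf{v}=0$ is feasible and $V^\ast(x)\le\|x\|^2_{P(\btheta_0)}$, while on the remainder of the compact set $X_\mathcal{P}$ continuity of $V^\ast$ and boundedness give a finite constant that can be absorbed into a $\mathcal{K}_\infty$ bound. (Continuity of $V^\ast(\cdot,\btheta_0,\Theta_t)$ in $x$ for fixed $\Theta_t$ follows from the fact that $\mathcal{P}$ is a convex SDP whose data depend continuously on $x$ and whose feasible set is, near any feasible point, described by a Slater-type condition owing to $\lambda<1$.)

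The heart of the proof is the descent inequality~(\ref{eq:ISS_condition_2}). I would use the shifted candidate solution $(\hat{\mathbf{v}}_{t+1},\hat{\boldsymbol{\alpha}}_{t+1})$ from the recursive-feasibility construction, which is feasible for $\mathcal{P}$ at $x_{t+1}=f(x_t,v^\ast_{0|t},w_t,\theta^\ast)$ and at parameter set $\Theta_{t+1}\subseteq\Theta_t$. Evaluating the nominal cost of this candidate and comparing to $V^\ast(x_t,\btheta_0,\Theta_t)$: the telescoping of the stage costs along the nominal trajectory plus the Lyapunov equation~(\ref{eq:lyap_mat}) for the terminal penalty gives a decrease of $-L(\bar x_0,\bar u_0)=-(\|x_t\|^2_Q+\|Kx_t+v^\ast_{0|t}\|^2_R)\le -\varsigma_3(\|x_t\|)$ with $\varsigma_3(s)=\lambda_{\min}(Q)s^2$, \emph{if the nominal trajectory were not perturbed}. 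The two perturbation sources are (a) the mismatch between the true successor state $x_{t+1}$ and the nominal prediction $\bar x_{1|t}$, which is governed by $w_t$ and by $(\theta^\ast-\btheta_0)$ through $A(\theta^\ast)-A(\btheta_0)$ and $B(\theta^\ast)-B(\btheta_0)$ acting on the (bounded, since $x_t\in X_\mathcal{P}$) state and input; and (b) nothing from $\btheta_t$ changing, since here $\btheta_t\equiv\btheta_0$ is frozen. I would bound the resulting change in the cost-of-the-candidate by a $\mathcal{K}$-function of $\|w_t\|$ plus a $\mathcal{K}$-function of $\|\btheta_0-\theta^\ast\|$, using Lipschitz continuity of the quadratic cost on the compact feasible set and boundedness of $\X_k$, $v^\ast_{k|t}$ uniformly over $X_\mathcal{P}$ (which follows from compactness of $X_\mathcal{P}$ and continuity/coercivity of the cost). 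Since $V^\ast(x_{t+1},\btheta_0,\Theta_{t+1})$ is no larger than the cost of this feasible candidate, assembling the pieces yields~(\ref{eq:ISS_condition_2}) with $\sigma_1,\sigma_2\in\mathcal{K}$.

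\textbf{Main obstacle.} The delicate step is quantifying the perturbation bound in~(\ref{eq:ISS_condition_2}) uniformly over $X_\mathcal{P}$: one must show that the optimal predicted tube vertices $U_j\alpha^\ast_{k|t}$ and inputs $v^\ast_{k|t}$ stay in a fixed compact set for every $x_t\in X_\mathcal{P}$, so that the cost map is globally Lipschitz on the relevant domain and the dependence on $w_t$ and $(\theta^\ast-\btheta_0)$ can be captured by genuine $\mathcal{K}$-functions rather than state-dependent constants; this rests on coercivity of $J$ in the nominal state/input variables together with boundedness of the feasible $\alpha$'s implied by compactness of $\mathcal{Z}$ and minimality of the representation $T$. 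A secondary technical point is the continuity of $\mathcal{V}(\cdot,t)=V^\ast(\cdot,\btheta_0,\Theta_t)$ required by Lemma~\ref{lem:ISS_condition}, which must be argued at each fixed $t$ from convexity and constraint qualification of~$\mathcal{P}$.
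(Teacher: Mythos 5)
Your proposal is correct and follows essentially the same route as the paper: verifying Lemma~\ref{lem:ISS_condition} with $\mathcal{V}(x,t)=V^\ast(x,\btheta_0,\Theta_t)$, using Proposition~\ref{prop:recursive} and compactness of $\mathcal{Z}$ for condition (i), standard value-function bounds for (\ref{eq:ISS_condition_1}), and the shifted feasible candidate with Lipschitz-type ($\mathcal{K}$-function) propagation of the $w_t$ and $\theta^\ast-\btheta_0$ perturbations, as in Limon et al., for (\ref{eq:ISS_condition_2}). The uniform-boundedness issue you flag is handled in the paper exactly as you suggest, by taking the Lipschitz bounds over the compact sets $X_{\mathcal{P}}$, $\mathcal{F}_{\mathcal{P}}$, $\W$ and $\Theta_0$ guaranteed by Assumptions~\ref{ass:compact_disturbance}--\ref{ass:compact_feasible_region}.
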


\begin{proof}
We first show that condition (i) of Lemma~\ref{lem:ISS_condition} is satisfied with
$ {\mathcal{R}} = X_\mathcal{P}$.
If $\mathcal{P}$ is feasible at $t=0$, then (\ref{eq:final_tube_condition}) implies that $\alpha_N$ exists such that $\X_N = \{x : Tx\leq \alpha_N \}$ satisfies
\begin{equation}\label{eq:feasible_term_set}
\begin{gathered}
\bigl( A(\theta) + B(\theta)K\bigr) x + w \in \X_N \ \ \forall x\in\X_N,\ w\in\W, \ \theta\in\Theta_0, \\
(F+GK)x \leq \ones \ \ \forall x\in\X_N .
\end{gathered}
\end{equation}
Therefore $(\mathbf{v},\boldsymbol{\alpha}) = (\{0,\ldots,0\}, \{\alpha_N,\ldots,\alpha_N\})$ is feasible for $\mathcal{P}$ for all $x_0\in\X_N$, and hence $\X_N \subseteq  X_{\mathcal{P}}$. 
In addition, the robust invariance of $\X_N$ implied by (\ref{eq:feasible_term_set}) ensures that $\W\subseteq\X_N$, and since $0\in\interior(\W)$ due to Assumption~\ref{ass:compact_disturbance}, $X_{\mathcal{P}}$ must contain the origin in its interior.
%
%
%
Furthermore, Proposition \ref{prop:recursive} shows that if $\mathcal{P}$ is initially feasible, then it is feasible for all $t \geq 0$, \red{so that $X_{\mathcal{P}}$ is positively invariant for  (\ref{eq:update_nonlinear}). Finally, $X_{\mathcal{P}}$ is necessarily compact by Assumption~\ref{ass:compact_feasible_region},} and it follows that condition (i) of Lemma~\ref{lem:ISS_condition} is satisfied if $ {\mathcal{R}} = X_\mathcal{P}$. 

We next consider the bounds (\ref{eq:ISS_condition_1}) in condition (ii) of Lemma~\ref{lem:ISS_condition}. 
For a given state $x$, nominal parameter vector $\bar{\theta}$ and parameter set $\Theta$, 
%
problem $\mathcal{P}$ with $\gamma=0$ and $Q,R\succ 0$ is a convex quadratic program. 
Therefore 
$V^\ast(x,\bar{\theta},\Theta)$ is a continuous positive definite, piecewise quadratic function of $x$ \cite{bemporad2002} for each $\btheta\in\Theta_0$ and $\Theta\subseteq\Theta_0$. 
Furthermore $\Theta_0$ is compact due to Assumption~\ref{ass:param_uncertainty} and it follows that 
there exist $\mathcal{K}_\infty$-functions $ \varsigma_1$, $ \varsigma_2$ such that (\ref{eq:ISS_condition_1}) holds with 
\[
\mathcal{V}(x,t) = V^\ast(x,\bar{\theta}_t,\Theta_t).
\]

To show that the bound (\ref{eq:ISS_condition_2}) in condition (ii) of Lemma~\ref{lem:ISS_condition} holds,
let $\mathcal{F}_{\mathcal{P}}$ denote the set $\{v : (x, Kx+v) \in \mathcal{Z},\ x\in X_{\mathcal{P}}\}$. Then, given the linear dependence of the system (\ref{eq:update_equation}), the model parameterisation (\ref{eq:system_model}) and the predicted control law (\ref{eq:input_law}) on the state $x$, disturbance $w$ and parameter vector $\theta$, and since $\W$, $\Theta_0$, $X_{\mathcal{P}}$ and $\mathcal{F}_{\mathcal{P}}$ are compact sets by Assumptions~\ref{ass:compact_disturbance}, \ref{ass:param_uncertainty} and \ref{ass:compact_feasible_region}, there exist $\mathcal{K}_\infty$ functions $\sigma_{x}$, $\sigma_w$, $\sigma_\theta$, $\sigma_L$, $\sigma_\phi$ such that, $\forall x,x_1,x_2\in X_{\mathcal{P}}$, $\forall v\in \mathcal{F}_{\mathcal{P}}$, $\forall w,w_1,w_2\in \W$, $\forall \theta,\theta_1,\theta_2\in\Theta_0$,
\begin{align*}
\| f(x_1,v,w_1,\theta_1)-f(x_2,v,w_2,\theta_2)\|  &\leq \sigma_{x}(\|x_1-x_2\|) + \sigma_w(\|w_1-w_2\|) +\sigma_{\theta}(\|\theta_1-\theta_2\|) ,\\
\lvert L(x_1,v)-L(x_2,v)\rvert  &\leq \sigma_L(\|x_1-x_2\|) ,\\
\lvert \phi(x_1,\theta)-\phi(x_2,\theta)\rvert &\leq \sigma_{\phi}(\|x_1-x_2\|).
\end{align*}
%
 {Following the proof of Theorem 5 in Limon et al.~\cite{Limon2009} and using the weak triangle inequality for $\mathcal{K}$-functions \cite{Sontag1989}, we obtain}
\begin{equation*}
J(x^+,\hat{\mathbf{v}},\btheta_t)  - \mathcal{V}(x,t) \leq - \varsigma_3 (\|x \|) + \sigma_1(\|w\|) +\sigma_2(\| \btheta_t - \theta^\ast \|)
\end{equation*}
where $\sigma_1(\|s\|) = \big(\sum_{k = 0}^{N-2} \sigma_L\circ \sigma_x^k +\sigma_\phi \circ \sigma_x^{N-1} \big)\circ 2\sigma_w(\|s\|)$ and $\sigma_2(\|s \|)= \big(\sum_{k = 0}^{N-2} \sigma_L\circ \sigma_x^k +\sigma_\phi \circ \sigma_x^{N-1} \big)\circ 2\sigma_\theta((\|s \|))$, and both $\sigma_1$ and $\sigma_2$ are $\mathcal{K}$-functions. 
Since $\hat{\mathbf{v}}$ is a feasible but suboptimal solution of $\mathcal{P}$ at $x^+$, and since $\btheta_{t+1}=\btheta_t$ by assumption, the optimal cost function satisfies 
$\mathcal{V}(x^+,t+1)  = V^\ast(x^+,\btheta_{t+1},\Theta_{t+1}) \leq J(x^+,\hat{\mathbf{v}},\btheta_t)$ and hence
\[
\mathcal{V}(x^+,t+1) - \mathcal{V}(x,t) \leq - \varsigma_3(\|x \|)+\sigma_1(\|w \|)+ \sigma_2(\| \btheta_t - \theta^\ast \|) .
\]
Thus all conditions of lemma \ref{lem:ISS_condition} are satisfied.
\end{proof}

\begin{corollary} \label{cor:ISS_theta}
Assume that $\gamma = 0$ and the nominal parameter vector $\bar{\theta}_t$ 
is updated at each time $t\in\NN_{\geq 0}$ using (\ref{eq:nominal_theta_update}). 
Then for all initial conditions $x_0 \in X_\mathcal{P}$, the system (\ref{eq:update_equation}) with control law $u_t = Kx_t + v_{0|t}^\ast$, where $v_{0|t}^\ast$ is the first element of $\mathbf{v}^\ast(x_t, \btheta_t,\Theta_t)$,
robustly satisfies the constraint (\ref{eq:constraints})
and is ISS with region of attraction $X_\mathcal{P}$.
\end{corollary}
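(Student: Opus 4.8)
The plan is to follow the proof of Theorem~\ref{thm:ISS_theta} almost verbatim, again taking $\mathcal{V}(x,t) = V^\ast(x,\btheta_t,\Theta_t)$ as the candidate ISS-Lyapunov function, and to isolate the single place where the update of $\btheta_t$ matters, namely the descent inequality. Condition~(i) of Lemma~\ref{lem:ISS_condition} with $\mathcal{R} = X_\mathcal{P}$ is unchanged: Proposition~\ref{prop:recursive} applies because $\Theta_t\subseteq\Theta_{t-1}$ by Lemma~\ref{lem:monotonic_param_set}, so $\X_N\subseteq X_\mathcal{P}$, $X_\mathcal{P}$ contains the origin in its interior, and $X_\mathcal{P}$ is compact (Assumption~\ref{ass:compact_feasible_region}) and positively invariant, exactly as before. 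The bounds~(\ref{eq:ISS_condition_1}) also carry over: for fixed $t$ the pair $(\btheta_t,\Theta_t)$ is fixed, so $V^\ast(\cdot,\btheta_t,\Theta_t)$ is continuous, positive definite and piecewise quadratic in $x$, and compactness of $\Theta_0$ (which contains $\btheta_t$ and $\Theta_t$ for every $t$, since $\btheta_t\in\Theta_t\subseteq\Theta_0$ by~(\ref{eq:nominal_theta_update}) and Lemma~\ref{lem:monotonic_param_set}) yields $\mathcal{K}_\infty$-functions $\varsigma_1,\varsigma_2$ independent of $t$ with $\varsigma_1(\|x\|)\le\mathcal{V}(x,t)\le\varsigma_2(\|x\|)$.

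For the descent inequality, I would first observe that the bound obtained in the proof of Theorem~\ref{thm:ISS_theta} for the shifted suboptimal solution $\hat{\mathbf{v}}$ evaluated at the \emph{old} nominal parameter,
\[
J(x^+,\hat{\mathbf{v}},\btheta_t) - \mathcal{V}(x,t) \le -\varsigma_3(\|x\|) + \sigma_1(\|w\|) + \sigma_2(\|\btheta_t-\theta^\ast\|),
\]
holds without change, since it only involves $\btheta_t$. The new ingredient is to compare $J(x^+,\hat{\mathbf{v}},\btheta_{t+1})$ with $J(x^+,\hat{\mathbf{v}},\btheta_t)$. Because $P(\theta)$ depends continuously (indeed Lipschitz-continuously on the compact set $\Theta_0$) on $\theta$ through the Lyapunov equation~(\ref{eq:lyap_mat}), and the nominal dynamics~(\ref{eq:nominal_update}) depend affinely on $\theta$, the map $\theta\mapsto J(x,\mathbf{v},\theta)$ is Lipschitz uniformly over $x\in X_\mathcal{P}$, over $\mathbf{v}$ in the compact set of feasible sequences (recall $\mathcal{F}_\mathcal{P}$ is compact), and over $\theta\in\Theta_0$; hence there is a $\mathcal{K}$-function $\sigma_{\btheta}$ with $|J(x,\mathbf{v},\theta_1)-J(x,\mathbf{v},\theta_2)|\le\sigma_{\btheta}(\|\theta_1-\theta_2\|)$ on these sets. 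Since $\hat{\mathbf{v}}$ is feasible for $\mathcal{P}$ at $x^+$ with parameter set $\Theta_{t+1}$ (Proposition~\ref{prop:recursive}) and $\btheta_{t+1}\in\Theta_{t+1}$ by~(\ref{eq:nominal_theta_update}),
\[
\mathcal{V}(x^+,t+1) = V^\ast(x^+,\btheta_{t+1},\Theta_{t+1}) \le J(x^+,\hat{\mathbf{v}},\btheta_{t+1}) \le J(x^+,\hat{\mathbf{v}},\btheta_t) + \sigma_{\btheta}(\|\btheta_{t+1}-\btheta_t\|).
\]

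It then remains to absorb $\sigma_{\btheta}(\|\btheta_{t+1}-\btheta_t\|)$ into the parameter-error term. Here I would use that $\theta^\ast\in\Theta_{t+1}$ (Lemma~\ref{lem:monotonic_param_set}) and that $\btheta_{t+1}$ is the Euclidean projection of $\btheta_t$ onto $\Theta_{t+1}$, so
\[
\|\btheta_{t+1}-\btheta_t\| = \min_{\theta\in\Theta_{t+1}}\|\theta-\btheta_t\| \le \|\theta^\ast-\btheta_t\|,
\]
whence $\sigma_{\btheta}(\|\btheta_{t+1}-\btheta_t\|)\le\sigma_{\btheta}(\|\btheta_t-\theta^\ast\|)$ because $\sigma_{\btheta}$ is increasing. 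Combining the three displayed inequalities and setting $\tilde\sigma_2 \defeq \sigma_2+\sigma_{\btheta}$, which is again a $\mathcal{K}$-function, gives $\mathcal{V}(x^+,t+1)-\mathcal{V}(x,t)\le-\varsigma_3(\|x\|)+\sigma_1(\|w\|)+\tilde\sigma_2(\|\btheta_t-\theta^\ast\|)$, so condition~(ii) of Lemma~\ref{lem:ISS_condition} holds and ISS with region of attraction $X_\mathcal{P}$ follows; robust satisfaction of~(\ref{eq:constraints}) is inherited from recursive feasibility exactly as in Theorem~\ref{thm:ISS_theta}. I expect the main obstacle to be precisely this last step — bounding the drift $\|\btheta_{t+1}-\btheta_t\|$ caused by the online update — and the key point is that, because every $\Theta_t$ still contains $\theta^\ast$, the projection can never move $\btheta$ by more than the current estimation error $\|\btheta_t-\theta^\ast\|$, which is exactly the quantity the ISS bound already tolerates.
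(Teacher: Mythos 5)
Your proof is correct, but it organizes the key step differently from the paper. The paper proves the descent inequality in one pass: it introduces the auxiliary sequence $z_{k+1}=f(z_k,\hat v_k,0,\btheta_{t+1})$, $z_0=x^+$, driven by the \emph{updated} parameter, and compares it term by term with the old nominal optimal trajectory $\bar{x}^\ast_k$ driven by $\btheta_t$, propagating simultaneously the initial-state mismatch (due to $w$ and $\theta^\ast-\btheta_t$) and the parameter drift $\btheta_{t+1}-\btheta_t$ through the $\mathcal{K}$-function bounds $\sigma_x,\sigma_\theta,\sigma_L,\sigma_\phi$, and invoking Lipschitz continuity of $P(\theta)$ only for the terminal term. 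You instead reuse the Theorem~\ref{thm:ISS_theta} inequality for $J(x^+,\hat{\mathbf{v}},\btheta_t)-\mathcal{V}(x,t)$ unchanged --- which is legitimate, since its derivation involves only $\btheta_t$ and never used $\btheta_t=\btheta_0$ --- and then handle the parameter update through a separate uniform sensitivity bound $\lvert J(x,\mathbf{v},\theta_1)-J(x,\mathbf{v},\theta_2)\rvert\le\sigma_{\btheta}(\lVert\theta_1-\theta_2\rVert)$ over the compact sets $X_\mathcal{P}$, $\mathcal{F}_\mathcal{P}$, $\Theta_0$, justified by the affine dependence of the nominal dynamics on $\theta$ and the Lipschitz continuity of $P(\theta)$ from (\ref{eq:lyap_mat}). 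Both arguments then hinge on exactly the same two facts the paper uses: $\theta^\ast\in\Theta_{t+1}$ (Lemma~\ref{lem:monotonic_param_set}) so that the projection (\ref{eq:nominal_theta_update}) gives $\lVert\btheta_{t+1}-\btheta_t\rVert\le\lVert\btheta_t-\theta^\ast\rVert$, and feasibility of the shifted candidate (Proposition~\ref{prop:recursive}) so that optimality yields $\mathcal{V}(x^+,t+1)\le J(x^+,\hat{\mathbf{v}},\btheta_{t+1})$. Your modularization is somewhat cleaner --- it avoids re-propagating the parameter perturbation through the predicted trajectory --- at the mild cost of asserting the uniform continuity of $J$ in $\theta$ as a separate lemma (which the paper effectively establishes termwise anyway); the resulting $\mathcal{K}$-function $\tilde\sigma_2=\sigma_2+\sigma_{\btheta}$ differs from the paper's $\sigma_2$ only in form, which is immaterial for the ISS conclusion.
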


\begin{proof}
It can be shown that condition (i) 
of Lemma~\ref{lem:ISS_condition} 
and the bounds (\ref{eq:ISS_condition_1}) 
in condition (ii) of Lemma~\ref{lem:ISS_condition} 
are satisfied with $ {\mathcal{R}} = X_{\mathcal{P}}$ and $\mathcal{V}(x,t) = V^\ast(x,\btheta_t,\Theta_t)$
for some $\mathcal{K}_\infty$-functions $ \varsigma_1$, $ \varsigma_2$
using the same argument as the proof of Theorem~\ref{thm:ISS_theta}.
To show that (\ref{eq:ISS_condition_2}) is also satisfied and hence complete the proof we use an argument similar to the proof of Theorem~\ref{thm:ISS_theta}.
In particular, as before we define $\bar{\mathbf{x}}^\ast=\{\bar{x}_0^\ast,\ldots,\bar{x}_N^\ast\}$ using the optimal solution of $\mathcal{P}$, $\mathbf{v}^\ast(x,\bar\theta_t,\Theta_t) = \{v_0^\ast,\ldots,v_{N-1}^\ast\}$, and
\[
\bar{x}_{k+1}^\ast = 
f(\bar{x}_k^\ast,v_k^\ast,0,\bar{\theta}_t),
\quad \bar{x}_0^\ast = x. 
\]
However, here we define $\mathbf{z} = \{z_0,\ldots,z_N\}$ as the sequence
\[
z_{k+1} = f(z_k,\hat{v}_k,0,\bar{\theta}_{t+1}),
\quad z_0 = x^+
\]
where $\hat{\mathbf{v}}=\{\hat{v}_0, \dots, \hat{v}_{N-1} \}$ has $\hat{v}_k = v_{k+1}^\ast$ for $k\in\mathbb{N}_{[0,N-2]}$ and $\hat{v}_{N-1} = 0$. Then
$\mathcal{V}(x,t) = V^\ast(x,\bar{\theta}_t,\Theta_t)$ implies
\begin{equation}\label{eq:cost_change}
\begin{aligned}
J(x^+,\hat{\mathbf{v}},\btheta_{t+1}) - \mathcal{V}(x,t) 
&= - L(x , v_0^\ast) + \sum_{k = 0}^{N-2} \bigl( L(z_k,\hat{v}_k) - L(\bar{x}_{k+1}^\ast , v^\ast_{k+1}) \bigr) 
+  L(z_{N-1}, 0) 
\\
&\quad 
+ \phi(z_N,\btheta_{t+1}) 
- \phi(z_{N-1},\btheta_{t+1})
+  \phi(z_{N-1},\btheta_{t+1}) 
- \phi(\bar{x}_N^\ast,\btheta_t) .
\end{aligned}
\end{equation}
The update law (\ref{eq:nominal_theta_update}) 
ensures that $\| \btheta_{t+1} - \btheta_t \| \leq \| \btheta_{t} - \theta^\ast\|$
since $\theta^\ast \in \Theta_{t+1}$. Hence, for all $k\in\NN_{[1,N-1]}$, we have
\begin{align*}
\|z_k - \bar{x}^\ast_{k+1}\| &\leq \sigma_x(\|z_{k-1} - \bar{x}^\ast_k\|) + \sigma_\theta(\|\btheta_{t+1}-\btheta_t\|) \\
& \leq 
\sigma_x(\|z_{k-1} - \bar{x}^\ast_k\|) + \sigma_\theta(\|\btheta_{t}-\theta^\ast\|) 
\end{align*}
and it follows that, for all $k\in\NN_{[1,N-1]}$,
\[
\|z_k - \bar{x}^\ast_{k+1}\| \leq
\tfrac{1}{2}(2\sigma_x)^k(\|x^+ - \bar{x}^\ast_1\|) + \sum_{j=0}^{k-1} \tfrac{1}{2} (2\sigma_x)^j\circ 2\sigma_\theta (\|\theta_t - \theta^\ast\|) 
\]
where $\|x^+ - \bar{x}_1^\ast\| \leq  \sigma_\theta (\| \btheta_t - \theta^\ast \|) + \sigma_w(\|w_0\|)$.
In addition we have, for all $k\in\NN_{[0,N-2]}$
\[
\lvert L(z_k,\hat{v}_k)-L(\bar{x}_{k+1}^\ast, v^\ast_{k+1}) \rvert  \leq \sigma_L ( \|z_k - \bar{x}_{k+1}^\ast \|),
\]
and, since $Q\succ 0$, there exists a $\mathcal{K}_\infty$ function $ \varsigma_3$ such that
$L(x,v_0^\ast) \geq  \varsigma_3 (\|x \|)$, while (\ref{eq:lyap_mat}) implies
\[
L(z_{N-1}, 0) + \phi(z_N,\btheta_{t+1}) - \phi(z_{N-1},\btheta_{t+1}) =0 .
\]
Furthermore (\ref{eq:lyap_mat}) is linear in $P(\theta)$ and the solution $P(\theta)$ is unique for all $\theta\in\Theta_t$ (see e.g.~\cite{khalil2002}) since $A(\theta)+B(\theta)K$ is by assumption stable. Therefore, by the implicit function theorem, $P(\theta)$ is Lipschitz continuous and $\lvert \phi(x,\theta_1) - \phi(x,\theta_2)\rvert \leq \kappa_\phi \| \theta_1 - \theta_2\|$ for all $x\in X_{\mathcal{P}}$, $\theta_1,\theta_2\in\Theta_0$, for some $\kappa_\phi > 0$. Hence
\[
\lvert \phi(z_{N-1},\bar{\theta}_{t+1}) - \phi(\bar{x}_N^\ast,\bar{\theta}_t)\rvert \leq
\sigma_\phi(\|z_{N-1} - \bar{x}_N^\ast\|) 
+ \kappa_\phi\|\btheta_{t+1} - \btheta_t\|.
\]
Collecting the bounds derived above on individual terms in the expression for $J(x^+,\hat{\mathbf{v}},\btheta_{t+1}) - \mathcal{V}(x,t)$ in (\ref{eq:cost_change}), we obtain
\begin{equation*}
J(x^+,\hat{\mathbf{v}},\btheta_{t+1})  - \mathcal{V}(x,t) \leq - \varsigma_3 (\|x \|) + \sigma_1(\|w\|) +\sigma_2(\| \btheta_t - \theta^\ast \|)
\end{equation*}
where $\sigma_1$, $\sigma_2$ are  $\mathcal{K}$-functions.
But by optimality we have $\mathcal{V}(x^+,t+1) = V^\ast(x^+,\bar{\theta}_{t+1},\Theta_{t+1}) \leq J(x^+,\hat{\mathbf{v}},\btheta_{t+1})$. Therefore (\ref{eq:ISS_condition_2}) holds and hence all of the conditions of Lemma~\ref{lem:ISS_condition} are satisfied.
\end{proof}

\begin{remark}
The input-to-state stability property implies that there exists a $\mathcal{K}\mathcal{L}$-function $\eta(\cdot,\cdot)$ and $\mathcal{K}$-functions $\psi(\cdot)$ and $\zeta(\cdot)$ such that for all feasible initial conditions $x_0\in X_{\mathcal{P}}$, the closed loop system trajectories satisfy, for all $t\in\NN_{\geq 0}$,
\[
\|x_t\| \leq \eta(\|x_0\|,t) + \psi\bigl(\max_{k\in\NN_{[0, t-1]}} \|w_k\|\bigr) + \zeta\bigl(\max_{k\in\NN_{[0, t-1]}} \|\btheta_k-\theta^\ast\|\bigr).
\]
\end{remark}
 {
\begin{remark}
Theorem \ref{thm:ISS_theta} and Corollary \ref{cor:ISS_theta} do not apply to the case that $\gamma \neq 0$. However, if $\gamma$ is replaced by a time-varying weight $\gamma_t$ in the objective of problem $\mathcal{P}$, then \blue{input-to-state stability (ISS)} can be guaranteed by switching $\gamma_t =0$ for all $t\geq t_0$, for some finite horizon $t_0$.
\end{remark}}

\section{Convergence of the estimated parameter set} \label{sec:convergence_parameter}

In terms of $D$ and $d$ defined in (\ref{eq:Dk}) and (\ref{eq:dk}), the system model $x_{t+1} = A(\theta^\ast) x_t+ B(\theta^\ast) u_t + w_t$ can be rewritten as
\begin{equation}\label{eq:model_sec5}
x_{t+1}  = D(x_t, u_t)\theta^\ast +d(x_t, u_t) +w_t
\end{equation}
where $x_{t+1}$, $D(x_t,u_t)$ and $d(x_t, u_t)$ are known at time $t+1$. Thus, the system is linear with regressor $D_t$, uncertain parameter vector $\theta^\ast$ and additive disturbance $w_t\in\W$. 

Bai et al.~\cite{Bai1998} show that, for such a system, the diameter of the parameter set constructed using a set-membership identification method converges to zero with probability 1 if the uncertainty bound $\W$ is tight and the regressor $D_t$ is persistently exciting. We extend this result and prove convergence of the estimated parameter set in more general cases. Specifically, in this paper we avoid the problem of computational intractability arising from a minimum volume update law of the form $\Theta_{t+1}  = \Theta_t \cap  { \Delta_{t+1}}$. Instead, we derive stochastic convergence results for parameter sets with fixed complexity and update laws of the form $\Theta_{t+1}  \supseteq \Theta_t \cap  { \Delta_{t+1}}$. 

In this section we first discuss relevant results for an update law that gives a minimal parameter set estimate for a given sequence of states (but whose representation has potentially unbounded complexity), before considering convergence of the fixed-complexity parameter set update law of Section~\ref{sec:fixed_comp_set_update}. 
We then compute bounds on the parameter set diameter if the bounding set for the additive disturbances is overestimated. Lastly, we demonstrate that similar results can be achieved when errors are present in the observed state, as would be encountered for example if the system state were estimated from noisy measurements.
 {In each case we relate the PE condition to the rate of parameter set convergence. We also prove that the parameter set converges to a point (or minimal uncertainty set) with probability one.}

In common with Bai et al.~\cite{Bai1998,Bai1995}, we do not assume a specific distribution for the disturbance input. However, the set $\W$ bounding the model disturbance is assumed to be tight in the sense that there is non-zero probability of a realisation $w_t$ lying arbitrarily close to any given point on the boundary, $\partial\W$, of $\W$.

\begin{assumption}[Tight disturbance bounds]\label{ass:probw}
For all $w^0\in \partial\W$ and any $\epsilon > 0$ the disturbance sequence $\{w_0,w_1,\ldots\}$ satisfies
$\Pr \bigl\{ \| w_t - w^0 \| < \epsilon \bigr\} \geq \probw (\epsilon) $,
for all $t\in\NN_{\geq 0}$, where $\probw(\epsilon) > 0$ whenever $\epsilon > 0$.
\end{assumption}

\begin{assumption}[Persistent Excitation]\label{ass:pe}
There exist positive scalars $\tau$, $\beta$ and an integer $N_u\geq \lceil p/n_x \rceil$ such that, for each $t\in\NN_{\geq 0}$ we have $\| D_t\| \leq \tau$ and
\[
 {
\sum_{j=t}^{t+N_u-1 } D_j^\top D_j \succeq \beta \Id.}
\]
\end{assumption}

\noindent We further assume throughout this section that the rows of $M_\Theta$ are normalised so that $\|[M_\Theta]_i\| = 1$ for all $i$.

\subsection{Minimal parameter set} \label{sec:min_set}
The unfalsified parameter set at time $t$ defined in (\ref{eq:delta_t_set}) can be expressed as
\begin{equation}\label{eq:Delta}
\Delta_{t} = \{\theta : D_{t-1} (\theta^\ast - \theta) + w_{t-1} \in \W\} ,
\end{equation}
where $w_t$ is the disturbance realisation at time $t$ and $D_t = D(x_t,u_t)$.
Let $w^0$ be an arbitrary point on the boundary $\partial \W$, then the normal cone $\N_\W(w^0)$ to $\W$ at $w^0$ is defined 
\begin{equation}\label{eq:normal_cone}
\N_\W(w^0) \defeq \{ g : g^\top (w - w^0) \leq 0 \ \ \forall w \in\W \} .
\end{equation}
%

\begin{proposition}\label{prop:min_set}
For all $t\in\NN_{\geq 0}$, all $\epsilon > 0$, and for any $\theta\in\mathbb{R}^p$ such that $\|\theta^\ast - \theta \| \geq \epsilon$, under Assumptions~\ref{ass:compact_disturbance}, \ref{ass:probw} and \ref{ass:pe} we have
\[
\Pr\{ \theta \not\in\Delta_j\} \geq \probw\bigl(\epsilon\sqrt{\beta/N_u}\bigr)
\]
for some $j\in\NN_{[t+1,t+N_u]}$.
\end{proposition}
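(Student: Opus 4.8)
The plan is to show that, over the window $j\in\NN_{[t+1,t+N_u]}$, at least one regressor $D_{j-1}$ ``sees'' the direction $\theta^\ast-\theta$ strongly enough that a disturbance realisation close to the right boundary point of $\W$ falsifies $\theta$. First I would fix $\theta$ with $\|\theta^\ast-\theta\|\geq\epsilon$ and write $\nu = \theta^\ast-\theta$. By the lower PE bound in Assumption~\ref{ass:pe}, $\sum_{j=t}^{t+N_u-1} \nu^\top D_j^\top D_j\,\nu \geq \beta\|\nu\|^2 \geq \beta\epsilon^2$, so by averaging there exists an index $i\in\NN_{[t,t+N_u-1]}$ with $\|D_i\nu\|^2 \geq \beta\epsilon^2/N_u$, i.e.\ $\|D_i\nu\| \geq \epsilon\sqrt{\beta/N_u}$. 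Set $j = i+1 \in \NN_{[t+1,t+N_u]}$; the claim is that with probability at least $\probw(\epsilon\sqrt{\beta/N_u})$ we have $\theta\notin\Delta_j$.

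Next I would use the representation $\Delta_j = \{\theta : D_{j-1}(\theta^\ast-\theta) + w_{j-1} \in \W\}$ from (\ref{eq:Delta}). So $\theta\in\Delta_j$ iff $w_{j-1} + D_i\nu \in \W$. The idea is to pick a boundary point $w^0\in\partial\W$ whose outward normal is aligned with $D_i\nu$: choose $w^0 \in \argmax_{w\in\W} (D_i\nu)^\top w$, which exists and lies on $\partial\W$ by compactness (Assumption~\ref{ass:compact_disturbance}), and note $D_i\nu$ lies in the normal cone $\N_\W(w^0)$ of (\ref{eq:normal_cone}). Then for any $w$ with $w + D_i\nu\in\W$ we have $(D_i\nu)^\top(w + D_i\nu - w^0) \leq 0$ by definition of the normal cone, hence $(D_i\nu)^\top(w - w^0) \leq -\|D_i\nu\|^2$, which forces $\|w - w^0\| \geq \|D_i\nu\| \geq \epsilon\sqrt{\beta/N_u}$ by Cauchy--Schwarz. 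Contrapositively, if $\|w_{j-1} - w^0\| < \epsilon\sqrt{\beta/N_u}$ then $w_{j-1} + D_i\nu \notin \W$, i.e.\ $\theta\notin\Delta_j$. Finally, Assumption~\ref{ass:probw} applied at this $w^0$ with $\epsilon' = \epsilon\sqrt{\beta/N_u}$ gives $\Pr\{\|w_{j-1}-w^0\| < \epsilon\sqrt{\beta/N_u}\} \geq \probw(\epsilon\sqrt{\beta/N_u})$, and since this event is contained in $\{\theta\notin\Delta_j\}$, the result follows.

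One technical point to handle carefully is that the index $i$ (and hence $j$ and the boundary point $w^0$) depends on the realised regressors $D_t,\ldots,D_{t+N_u-1}$, which in turn depend on $w_0,\ldots,w_{t+N_u-2}$; however, $D_i$ is $\mathcal{F}_{j-1}$-measurable (it is a function of $x_i,u_i$, determined by data up to time $i = j-1$), while $w_{j-1}$ is independent of the past disturbances by the independence hypothesis in Section~\ref{sec:problem_definition}, so the conditional probability bound of Assumption~\ref{ass:probw} applies with $w^0$ treated as a (measurable) constant given $\mathcal{F}_{j-1}$. The main obstacle is thus not any single estimate but making this measurability/conditioning argument clean — ensuring that the averaging step selects $i$ in a way compatible with applying the tight-disturbance bound to $w_{j-1}$ — after which the geometric argument via the normal cone and Cauchy--Schwarz is routine.
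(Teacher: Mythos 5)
Your proof is correct and follows essentially the same route as the paper's: use the PE lower bound to extract an index at which $\|D_j(\theta^\ast-\theta)\|\geq\epsilon\sqrt{\beta/N_u}$, choose a boundary point $w^0\in\partial\W$ whose normal cone contains $D_j(\theta^\ast-\theta)$, show that $\|w_j-w^0\|<\|D_j(\theta^\ast-\theta)\|$ forces $\theta\notin\Delta_{j+1}$, and invoke Assumption~\ref{ass:probw}. Your explicit treatment of the measurability/conditioning of the random index and of $w^0$ is a refinement the paper leaves implicit, not a different argument.
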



\begin{proof}
Assumption~\ref{ass:compact_disturbance} implies that there exists $w^0\in\partial\W$ so that $D_j (\theta^\ast - \theta) \in \N_\W(w^0)$ for any given $j\in\NN_{[t,t+N_u-1]}$ and $\theta\in\Theta_t$ 
Therefore, if $\theta$ satisfies $(\theta^\ast - \theta)^\top D_j^\top [D_j (\theta^\ast - \theta) + w_j - w^0] > 0$, then the definition (\ref{eq:normal_cone}) of $\N_\W(w^0)$ implies $D_j (\theta^\ast - \theta) + w_j \notin \W$, and hence $\theta\notin\Delta_{j+1}$ from (\ref{eq:Delta}).
But
\[
(\theta^\ast - \theta)^{\!\top} D_j^{\!\top} \bigl[D_j (\theta^\ast - \theta) +  w_j - w^0\bigr] 
= \|D_j (\theta^\ast - \theta)\|^2 + (\theta^\ast - \theta)^\top D_j^\top(w_j- w^0) 
\geq  \|D_j (\theta^\ast - \theta)\|^2 - \|D_j (\theta^\ast - \theta)\| \, \|w_j  - w^0\|.
\]
Therefore $\theta\notin\Delta_{j+1}$ whenever $\|w_j - w^0\| < \|D_j (\theta^\ast - \theta)\|$. Furthermore, for all $t\in\NN_{\geq 0}$ Assumption~\ref{ass:pe} implies
\[
\sum_{j=t}^{t+N_u-1} \|D_j(\theta^\ast-\theta)\|^2 \geq \beta \|\theta^\ast - \theta\|^2 .
\]
Hence, if $\|\theta^\ast - \theta \| \geq \epsilon$, then there must exist some $j\in\NN_{[t,t+N_u-1]}$ such that
\[
\| D_j (\theta^\ast - \theta) \| \geq \epsilon \sqrt{\beta/N_u}.
\]
If $\|w_j - w^0\| < \epsilon\sqrt{\beta/N_u}$, then it follows that $\|w_j - w^0\| < \epsilon\sqrt{\beta/N_u} \leq \|D_j(\theta^\ast - \theta)\|$ and thus $\theta \notin \Delta_{j+1}$. Assumption~\ref{ass:probw} implies the probability of this event is at least $\probw\bigl(\epsilon\sqrt{\beta/N_u}\bigr)$. 
\end{proof}

\begin{theorem}\label{thm:min_set}
If $\Theta_t = \bigcap_{j=1}^{t} \Delta_j \cap \Theta_0$ and Assumptions~\ref{ass:compact_disturbance}, \ref{ass:probw} and \ref{ass:pe} hold, then for all $\theta\in\Theta_0$ such that $\|\theta - \theta^\ast \| \geq \epsilon$, for all $t\in\NN_{\geq 0}$ and any $\epsilon >0$, we have
\[
\Pr \{ \theta \in \Theta_t \}  \leq \biggl[ 1 - \probw \bigl(\epsilon \sqrt{\beta/N_u} \bigr) \biggr]^{\lfloor t/N_u \rfloor}  .
\]
\end{theorem}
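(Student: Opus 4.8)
The plan is to use Proposition~\ref{prop:min_set} blockwise. Set $K=\lfloor t/N_u\rfloor$ and partition the index range $\{1,\ldots,KN_u\}$ into the $K$ consecutive windows $W_i=\{iN_u+1,\ldots,(i+1)N_u\}$, $i=0,\ldots,K-1$ (the at most $N_u-1$ leftover indices are simply discarded). Since $KN_u\le t$ we have $\Theta_t\subseteq\bigcap_{j=1}^{KN_u}\Delta_j$, so the event $\{\theta\in\Theta_t\}$ is contained in $\bigcap_{i=0}^{K-1}F_i$, where $F_i$ denotes the event that $\theta\in\Delta_j$ for every $j\in W_i$. Applying Proposition~\ref{prop:min_set} with the time origin shifted to $iN_u$ already gives $\Pr\{F_i\}\le 1-\probw(\epsilon\sqrt{\beta/N_u})$ for each $i$; the remaining work is to chain these into the product $[1-\probw(\epsilon\sqrt{\beta/N_u})]^{K}$.

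First I would introduce the filtration $\mathcal{G}_i=\sigma(w_0,\ldots,w_{iN_u-1})$, so that $\mathcal{G}_0$ is trivial and $\mathcal{G}_0\subseteq\mathcal{G}_1\subseteq\cdots$, and note that $F_i$ is $\mathcal{G}_{i+1}$-measurable, because $F_i$ depends only on $w_j$ and $D_j=D(x_j,u_j)$ for $j\le(i+1)N_u-1$, and each such $D_j$ is a function of $w_0,\ldots,w_{j-1}$. The key lemma is the conditional estimate
\[
\Pr\{F_i\mid\mathcal{G}_i\}\ \le\ 1-\probw\bigl(\epsilon\sqrt{\beta/N_u}\bigr)\qquad\text{a.s.}
\]
Granting this, the tower property gives $\Pr\{\bigcap_{i=0}^{K-1}F_i\}=\mathbb{E}\big[\prod_{i=0}^{K-2}\mathbf{1}_{F_i}\,\Pr\{F_{K-1}\mid\mathcal{G}_{K-1}\}\big]\le(1-\probw(\epsilon\sqrt{\beta/N_u}))\,\Pr\{\bigcap_{i=0}^{K-2}F_i\}$, and iterating $K$ times together with $\{\theta\in\Theta_t\}\subseteq\bigcap_iF_i$ yields the claimed bound.

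To prove the conditional estimate I would rerun the argument of Proposition~\ref{prop:min_set} inside a single window while tracking the randomness. Working on $W_i$, with disturbance indices $S_i=\{iN_u,\ldots,(i+1)N_u-1\}$: Assumption~\ref{ass:pe} gives $\sum_{j\in S_i}\|D_j(\theta^\ast-\theta)\|^2\ge\beta\|\theta^\ast-\theta\|^2\ge\beta\epsilon^2$, so the random first index $\tau_i\in S_i$ with $\|D_{\tau_i}(\theta^\ast-\theta)\|\ge\epsilon\sqrt{\beta/N_u}$ is well defined. For each fixed $j\in S_i$, the event $\{\tau_i=j\}$ and the associated boundary point $w^0_j\in\partial\W$ with $D_j(\theta^\ast-\theta)\in\N_\W(w^0_j)$ are $\sigma(w_0,\ldots,w_{j-1})$-measurable (hence also beyond $\mathcal{G}_i$); on $\{\tau_i=j\}\cap\{\|w_j-w^0_j\|<\epsilon\sqrt{\beta/N_u}\}$ the normal-cone argument of Proposition~\ref{prop:min_set} forces $D_j(\theta^\ast-\theta)+w_j\notin\W$, i.e.\ $\theta\notin\Delta_{j+1}$, so $F_i$ fails. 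Conditioning on $\sigma(w_0,\ldots,w_{j-1})$ and using that $w_j$ is independent of the past together with Assumption~\ref{ass:probw} gives $\Pr\{\|w_j-w^0_j\|<\epsilon\sqrt{\beta/N_u}\mid w_0,\ldots,w_{j-1}\}\ge\probw(\epsilon\sqrt{\beta/N_u})$; summing $\Pr\{\tau_i=j,\ \|w_j-w^0_j\|<\epsilon\sqrt{\beta/N_u}\mid\mathcal{G}_i\}$ over $j\in S_i$ and using $\sum_{j\in S_i}\Pr\{\tau_i=j\mid\mathcal{G}_i\}=1$ yields $\Pr\{F_i^c\mid\mathcal{G}_i\}\ge\probw(\epsilon\sqrt{\beta/N_u})$, which is exactly the estimate needed.

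The main obstacle is this conditional refinement: a naive reading treats the per-window survival events $F_i$ as independent, which they are not, since each $D_j$ inherits dependence on the entire past disturbance sequence through the closed-loop state. Handling the random location $\tau_i$ of the excited step within a window (rather than a fixed step) and checking that Assumption~\ref{ass:probw} may be invoked conditionally on the history up to that step — which is where the disturbance-independence hypothesis enters — is the technical heart of the proof. Everything else (the block partition, the containment $\{\theta\in\Theta_t\}\subseteq\bigcap_iF_i$, and the telescoping of the conditional bounds) is routine.
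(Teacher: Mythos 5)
Your proof is correct and takes essentially the same route as the paper: the paper likewise chains a per-window elimination bound derived from Proposition~\ref{prop:min_set} over $\lfloor t/N_u\rfloor$ disjoint windows of length $N_u$, writing $\Pr\{\theta\in\Theta_t\}=\Pr\{\theta\in\Theta_{t}\mid\theta\in\Theta_{t-N_u}\}\Pr\{\theta\in\Theta_{t-N_u}\}$ and iterating. The only differences are presentational: your windows are anchored forward from time $1$ rather than backward from $t$ (immaterial, since Assumption~\ref{ass:pe} holds for every window), and you make explicit the filtration, the random excited index $\tau_i$, the measurable choice of the boundary points $w_j^0$, and the tower-property telescoping, details which the paper compresses into a direct conditional invocation of Proposition~\ref{prop:min_set}.
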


\begin{proof}
For the non-trivial case of $t\geq N_u$ we have $\Pr \{\theta\in\Theta_t\} = \Pr \{\theta\in\Theta_{t} \, | \, \theta\in\Theta_{t-N_u} \} \Pr\{\theta\in\Theta_{t-N_u}\}$ since $\Theta_t\subseteq\Theta_{t-N_u}$. Also $\Theta_t = \bigcap_{j=t-N_u+1}^{t} \Delta_j \cap \Theta_{t-N_u}$ and Proposition~\ref{prop:min_set} implies
$
\Pr \{\theta\in\Theta_{t} \, | \, \theta\in\Theta_{t-N_u} \} \leq  1 - \probw \bigl(\epsilon \sqrt{\beta/N_u}\bigr)
$
if  $\|\theta - \theta^\ast\| \geq \epsilon$. Therefore 
\[
\Pr \{\theta\in\Theta_t\} \leq \Bigl( 1 - \probw \bigl(\epsilon \sqrt{\beta/N_u}\bigr)\Bigr)\Pr\{\theta\in\Theta_{t-N_u}\},
\]
and the result follows by applying this inequality $\lfloor t/N_u\rfloor$ times. 
\end{proof}

\begin{corollary}\label{cor:min_set}
Under Assumptions~\ref{ass:compact_disturbance}, \ref{ass:probw} and \ref{ass:pe},
$\Theta_t = \bigcap_{j=1}^t\Delta_j\cap\Theta_0$ converges to $\{\theta^\ast\}$ with probability~1.
\end{corollary}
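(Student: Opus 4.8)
The plan is to deduce the almost-sure convergence of $\Theta_t$ to the singleton $\{\theta^\ast\}$ from the probabilistic bound established in Theorem~\ref{thm:min_set} together with a Borel--Cantelli argument applied along a countable dense family of ``test directions/points''. First I would fix an arbitrary $\epsilon > 0$ and consider a finite $\epsilon/2$-net $\{\theta^{(1)}, \dots, \theta^{(M_\epsilon)}\}$ of the compact set $\{\theta\in\Theta_0 : \|\theta - \theta^\ast\| \geq \epsilon\}$ (this set is compact by Assumption~\ref{ass:param_uncertainty}, and finiteness of the net follows from compactness). The key observation is that $\Theta_t$ is a nested decreasing sequence of convex sets all containing $\theta^\ast$ (Lemma~\ref{lem:monotonic_param_set}, or directly from $\Theta_t = \bigcap_{j=1}^t\Delta_j\cap\Theta_0$), so the event $\{\mathrm{diam}(\Theta_t) \geq 2\epsilon \text{ for all } t\}$ — equivalently, the failure of convergence to within $\epsilon$ of $\theta^\ast$ along a suitable direction — is controlled by whether some net point $\theta^{(i)}$ remains in $\Theta_t$ for all $t$.

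Next I would apply Theorem~\ref{thm:min_set} to each net point: for each fixed $i$, since $\|\theta^{(i)} - \theta^\ast\| \geq \epsilon$, we have $\Pr\{\theta^{(i)}\in\Theta_t\} \leq [1 - p_w(\epsilon\sqrt{\beta/N_u})]^{\lfloor t/N_u\rfloor}$, and because $p_w(\epsilon\sqrt{\beta/N_u}) > 0$ (Assumption~\ref{ass:probw}) this bound tends to $0$ as $t\to\infty$. Since $\{\theta^{(i)}\in\Theta_t\}$ is a decreasing sequence of events in $t$ (by nestedness of $\Theta_t$), we get $\Pr\{\theta^{(i)}\in\Theta_t \text{ for all } t\} = \lim_{t\to\infty}\Pr\{\theta^{(i)}\in\Theta_t\} = 0$. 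Taking the union over the finitely many $i\in\NN_{[1,M_\epsilon]}$, the event that \emph{some} net point survives forever still has probability $0$. On the complementary probability-one event, for each $i$ there is a (random) finite time $T_i$ with $\theta^{(i)}\notin\Theta_{T_i}$, hence $\theta^{(i)}\notin\Theta_t$ for all $t\geq \max_i T_i =: T_\epsilon$; I would then use the net property plus convexity of $\Theta_t$ to conclude that $\Theta_t \subseteq \{\theta : \|\theta-\theta^\ast\| < 2\epsilon\}$ for all $t\geq T_\epsilon$ — the point being that if $\Theta_t$ contained a point at distance $\geq 2\epsilon$ from $\theta^\ast$ it would (by convexity and containing $\theta^\ast$) contain a point at distance exactly $\epsilon$ from $\theta^\ast$, which lies within $\epsilon/2$ of some net point $\theta^{(i)}$, and a short argument shows $\theta^{(i)}$ would then also be forced into $\Theta_t$, a contradiction.

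Finally I would remove the dependence on the fixed $\epsilon$: apply the above to the sequence $\epsilon = 1/n$, $n\in\NN_{>0}$, obtaining for each $n$ a probability-one event $E_n$ on which $\Theta_t$ is eventually contained in the ball of radius $2/n$ about $\theta^\ast$; the intersection $\bigcap_n E_n$ still has probability one (countable intersection of probability-one events), and on this event $\mathrm{diam}(\Theta_t)\to 0$ with $\theta^\ast\in\Theta_t$ for all $t$, i.e. $\Theta_t\to\{\theta^\ast\}$ (in the Hausdorff metric). I expect the main obstacle to be the geometric step in the previous paragraph: making precise, using only convexity and $\theta^\ast\in\Theta_t$, why killing a finite $\epsilon/2$-net of the annulus $\{\|\theta-\theta^\ast\|\geq\epsilon\}$ suffices to shrink $\Theta_t$ inside the $2\epsilon$-ball — one must be slightly careful that a surviving boundary point of $\Theta_t$ at distance exactly $\epsilon$ really forces a nearby net point into the (convex, $\theta^\ast$-containing) set $\Theta_t$, which follows because $\Theta_t$ contains the whole segment from $\theta^\ast$ to that point and a small ball transverse argument, but should be spelled out carefully. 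Everything else is a routine Borel--Cantelli / monotone-event bookkeeping exercise.
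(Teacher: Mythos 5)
Your probabilistic skeleton is sound and matches the paper's: the per-point bound of Theorem~\ref{thm:min_set} tends to zero geometrically, the events $\{\theta\in\Theta_t\}$ are nested, so each fixed $\theta$ with $\|\theta-\theta^\ast\|\geq\epsilon$ is excluded eventually with probability one (you use continuity from above where the paper invokes Borel--Cantelli; both are fine), and the final intersection over $\epsilon=1/n$ is routine. The genuine gap is exactly the step you flagged: the claim that eliminating a finite $\epsilon/2$-net of $\{\theta\in\Theta_0:\|\theta-\theta^\ast\|\geq\epsilon\}$ forces $\Theta_t\subseteq\{\theta:\|\theta-\theta^\ast\|<2\epsilon\}$ is false for a general convex $\Theta_t\ni\theta^\ast$. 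The set $\Theta_t=\bigcap_j\Delta_j\cap\Theta_0$ can be arbitrarily thin (it may even have empty interior, since the half-spaces defining the $\Delta_j$ can pass arbitrarily close to $\theta^\ast$): a needle-shaped $\Theta_t$ hugging the segment from $\theta^\ast$ to a point far outside the $2\epsilon$-ball meets the annulus without containing any of the finitely many net points. Convexity plus $\theta^\ast\in\Theta_t$ only gives you that segment, and a surviving point at distance $\epsilon$ does not ``force a nearby net point into $\Theta_t$''; the ``small ball transverse argument'' has nothing to work with because $\Theta_t$ need not contain any ball. So, as written, killing the net does not shrink the set.

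The repair is to make the exclusion uniform over small balls rather than to appeal to convexity. Using $\|D_j\|\leq\tau$ from Assumption~\ref{ass:pe}, the argument of Proposition~\ref{prop:min_set} excludes the whole ball of radius $\delta$ centred at $\theta$ from $\Delta_{j+1}$ whenever $\|w_j-w^0\|<\|D_j(\theta^\ast-\theta)\|-\tau\delta$: with $g=D_j(\theta^\ast-\theta)\in\N_\W(w^0)$ and $\|\theta'-\theta\|\leq\delta$ one has $g^\top\bigl[D_j(\theta^\ast-\theta')+w_j-w^0\bigr]\geq\|g\|\bigl(\|g\|-\|w_j-w^0\|-\tau\delta\bigr)$, so positivity excludes every such $\theta'$. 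Since some $j$ in each window satisfies $\|D_j(\theta^\ast-\theta)\|\geq\epsilon\sqrt{\beta/N_u}$, choosing e.g.\ $\delta=\tfrac{1}{2\tau}\epsilon\sqrt{\beta/N_u}$ gives a per-window ball-exclusion probability of at least $\probw\bigl(\tfrac12\epsilon\sqrt{\beta/N_u}\bigr)>0$; covering the compact annulus by finitely many such balls and running your monotone-events bookkeeping then yields that almost surely $\Theta_t\subseteq\{\theta:\|\theta-\theta^\ast\|<\epsilon\}$ for all large $t$, with no convexity argument needed, and intersecting over $\epsilon=1/n$ finishes the proof. For comparison, the paper's own proof is purely pointwise (fixed $\theta$, Borel--Cantelli) and passes from that to set convergence without spelling out this passage, so your instinct that an additional argument is needed is correct --- but the bridge must be the uniform ball version of Proposition~\ref{prop:min_set}, not the net-plus-convexity step you propose.
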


\begin{proof}
For any $\theta\in\Theta_0$ and $\epsilon > 0$ such that $\|\theta- \theta^\ast\|\geq \epsilon$,
Theorem~\ref{thm:min_set} implies that $\sum_{t=0}^\infty \Pr\{\theta\in\Theta_t\}$ is necessarily finite, and since $\theta\in\Theta_t$ requires that $\theta\in\Theta_{t-1}$, the Borel-Cantelli Lemma therefore implies that $\Pr\bigl\{\theta\in\bigcap_{t=0}^\infty\Theta_t \bigr\} = 0$. It follows that $\Theta_t\to\{\theta^\ast\}$ as $t\to\infty$ with probability 1 since $\epsilon > 0$ is arbitrary.
\end{proof}


\subsection{Fixed complexity parameter set} \label{sec:fixed_set}
In order to reduce computational load and ensure numerical tractability, we assume that the parameter set $\Theta_t$ is defined by a fixed complexity polytope, as in (\ref{eq:defn_Theta_set}) and (\ref{eq:fixed_set_update}). This section shows that, although a degree of conservativeness is introduced by fixing the complexity of $\Theta_t$, asymptotic convergence of this set to the true parameter vector $\theta^\ast$ still holds with probability 1. 

\begin{theorem}\label{thm:fixed_set}
If $\Theta_t$ is updated according to (\ref{eq:defn_Theta_set}), (\ref{eq:fixed_set_update}) and Remark~\ref{rem:periodic_update}, and Assumptions \ref{ass:compact_disturbance}, \ref{ass:probw} and \ref{ass:pe} hold, then for all $\theta\in\Theta_0$ such that $[M_\Theta]_i (\theta- \theta^\ast) \geq \epsilon$ for some $i\in\NN_{[1,r]}$, we have, for all $t\in\NN_{\geq 0}$ and any  $\epsilon > 0$,
\[
\Pr \{ \theta\in\Theta_t \} \leq 
\biggl\{ 1 - \biggl[ \probw \Bigl( \frac{\epsilon\beta}{N_u\tau} \Bigr) \biggr]^{N_u}\biggr\}^{\lfloor t/N_u \rfloor} .
\]
\end{theorem}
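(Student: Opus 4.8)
The plan is to follow the template of the proof of Theorem~\ref{thm:min_set}. Fix $\theta\in\Theta_0$ with $[M_\Theta]_i(\theta-\theta^\ast)\ge\epsilon$ for some $i$. Using Lemma~\ref{lem:monotonic_param_set} together with Remark~\ref{rem:periodic_update}, $\Theta_t\subseteq\Theta_{t-N_u}$, so $\Pr\{\theta\in\Theta_t\}=\Pr\{\theta\in\Theta_t\mid\theta\in\Theta_{t-N_u}\}\,\Pr\{\theta\in\Theta_{t-N_u}\}$, and it suffices to show that, conditional on $\theta\in\Theta_{t-N_u}$, the update over one PE window removes $\theta$ with probability at least $\bigl[\probw(\epsilon\beta/(N_u\tau))\bigr]^{N_u}$; the stated bound then follows by iterating over the $\lfloor t/N_u\rfloor$ windows in $[0,t]$. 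I would exhibit this per-window probability as a product of $N_u$ per-time-step events, one for each $k$ in the window, of the form $\|w_k-w^0_k\|<\epsilon\beta/(N_u\tau)$ for points $w^0_k\in\partial\W$ to be constructed: Assumption~\ref{ass:probw} gives each such event probability at least $\probw(\epsilon\beta/(N_u\tau))$, and — because the disturbances are mutually independent and each $w^0_k$ will be constructed so as to depend only on $w_0,\dots,w_{k-1}$ (equivalently on $D_{t-N_u},\dots,D_k$) — conditioning sequentially in $k$ turns the intersection of these events into the required product.

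The second ingredient is to translate ``$\theta$ is removed'' into a statement about the window cuts. By the linear set-inclusion characterisation underlying~(\ref{eq:fixed_set_update}), $\Theta_t=\{\theta':M_\Theta\theta'\le\mu_t\}$ with $[\mu_t]_i=\max\{[M_\Theta]_i\theta':\theta'\in\Theta_{t-N_u}\cap\bigcap_{j=t-N_u+1}^{t}\Delta_j\}$; since $[M_\Theta]_i(\theta-\theta^\ast)\ge\epsilon$, it is enough to show that no $\theta'$ with $D_k(\theta^\ast-\theta')+w_k\in\W$ for all $k$ in the window (using the form~(\ref{eq:Delta}) of the unfalsified sets) satisfies $[M_\Theta]_i(\theta'-\theta^\ast)\ge\epsilon$. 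So the task is: choose the $w^0_k$ such that, whenever every $w_k$ is within $\epsilon\beta/(N_u\tau)$ of $w^0_k$, the system $\{D_k(\theta^\ast-\theta')+w_k\in\W\}_{k}$ together with $[M_\Theta]_i(\theta'-\theta^\ast)\ge\epsilon$ is infeasible. Note that $[M_\Theta]_i\delta\ge\epsilon$ and $\|[M_\Theta]_i\|=1$ already give $\|\delta\|\ge\epsilon$ for $\delta:=\theta'-\theta^\ast$.

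The geometric core — and the main obstacle — is the construction of the $w^0_k$. The guiding idea is the one behind Proposition~\ref{prop:min_set}: a cut whose active facet normal is aligned with $D_k$ applied to a prescribed direction $g_k\in\RR^{n_x}$ (i.e.\ $w^0_k:=\arg\max_{w\in\W}(-D_k g_k)^\top w$, so that $-D_kg_k\in\N_\W(w^0_k)$) yields, via the inequality defining $\N_\W(\cdot)$, a scalar linear inequality on $\delta$ of the form $(D_k g_k)^\top D_k\delta\le\|D_kg_k\|\,\|w_k-w^0_k\|$ for every surviving $\theta'$. If the $g_k$ are chosen so that $\sum_k D_k^\top g_k=[M_\Theta]_i^\top$ and $\sum_k\|g_k\|\le N_u\tau/\beta$, then summing these inequalities and using $\|w_k-w^0_k\|<\epsilon\beta/(N_u\tau)$ gives exactly $[M_\Theta]_i\delta<\epsilon$, contradicting $[M_\Theta]_i\delta\ge\epsilon$. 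Such $g_k$ exist \emph{non-causally}: $g_k=D_k\bigl(\sum_j D_j^\top D_j\bigr)^{-1}[M_\Theta]_i^\top$ satisfies $\sum_k D_k^\top g_k=[M_\Theta]_i^\top$, and $\|g_k\|\le\|D_k\|\,\bigl\|(\sum_j D_j^\top D_j)^{-1}\bigr\|\le\tau/\beta$ by $\|D_k\|\le\tau$ and the PE bound $\sum_j D_j^\top D_j\succeq\beta\Id$ of Assumption~\ref{ass:pe} — and this is precisely where the factor $\beta/(N_u\tau)$, rather than the $\sqrt{\beta/N_u}$ of the minimal-set case, enters. The difficulty, absent in Proposition~\ref{prop:min_set}, is twofold: a single $\theta$ no longer suffices (one must exclude an entire half-space's worth of $\delta$'s, i.e.\ the whole face of $\Theta_t$ in direction $[M_\Theta]_i$), and — more seriously — the $w^0_k$, hence the $g_k$, must be fixed before the window disturbances are revealed, so the above non-causal choice is not admissible. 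Producing a causal surrogate, in which $g_k$ depends only on $D_{t-N_u},\dots,D_k$ while still forcing $\sum_k D_k^\top g_k=[M_\Theta]_i^\top$ (or an approximation thereof that can be absorbed using the compactness of $\Theta_{t-N_u}$) with $\sum_k\|g_k\|$ controlled by $\tau$ and $\beta$, is the step I expect to be the genuine technical hurdle.

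With such a construction in hand, the conclusion assembles as in Theorem~\ref{thm:min_set}: the per-window removal probability is at least $\bigl[\probw(\epsilon\beta/(N_u\tau))\bigr]^{N_u}$, hence $\Pr\{\theta\in\Theta_t\mid\theta\in\Theta_{t-N_u}\}\le 1-\bigl[\probw(\epsilon\beta/(N_u\tau))\bigr]^{N_u}$, and iterating over the $\lfloor t/N_u\rfloor$ disjoint windows gives the claimed bound.
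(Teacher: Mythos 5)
Your outline reproduces the paper's argument almost verbatim: the window decomposition $\Pr\{\theta\in\Theta_t\}=\Pr\{\theta\in\Theta_t\mid\theta\in\Theta_{t-N_u}\}\Pr\{\theta\in\Theta_{t-N_u}\}$ via Lemma~\ref{lem:monotonic_param_set}; the weights $g_j$ given (up to sign and a harmless dimension slip in your $\arg\max$ expression) by $g_j^\top=-[M_\Theta]_i\bigl(\sum_k D_k^\top D_k\bigr)^{-1}D_j^\top$, exactly (\ref{eq:normal_def}); anchor points $w_j^0\in\partial\W$ with $g_j\in\N_\W(w_j^0)$; the summed normal-cone inequalities with $\|g_j\|\leq\tau/\beta$ producing the threshold $\epsilon\beta/(N_u\tau)$; and the product of $N_u$ per-step events iterated over $\lfloor t/N_u\rfloor$ windows. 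Your observation that one must exclude the whole half-space $\{[M_\Theta]_i(\theta'-\theta^\ast)\geq\epsilon\}$ (not just the single $\theta$) so that the $i$th support value of $\Theta_{t-N_u}\cap\bigcap_j\Delta_j$ drops below $[M_\Theta]_i\theta$ is correct, but it is automatically handled by the construction you wrote down, since $g_j$ and $w_j^0$ do not depend on $\theta$; the paper's proof uses exactly this uniformity, if tersely.

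The gap is that you stop at the step you call the ``genuine technical hurdle'' and never complete the proof. The paper does not build the causal surrogate you ask for: it uses the non-causal $g_j$, $w_j^0$ as they stand and concludes directly ``from Assumption~\ref{ass:probw} and the independence of the sequence $\{w_0,w_1,\ldots\}$'' that the joint event $\{\|w_j-w_j^0\|<\epsilon\beta/(N_u\tau)\ \forall j\}$ has probability at least $\bigl[\probw\bigl(\epsilon\beta/(N_u\tau)\bigr)\bigr]^{N_u}$, i.e.\ it treats the $w_j^0$ as if they were fixed boundary points. So, measured against the paper, your proposal is the right approach left unfinished: the expected completion is precisely the assertion you declined to make, not new machinery. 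That said, your underlying worry is a legitimate observation about the paper's own argument: in closed loop the regressors $D_k$ entering (\ref{eq:normal_def}) are functions of disturbances inside the window, so $w_j^0$ may depend on $w_j$ and on later disturbances, and Assumption~\ref{ass:probw} (stated for fixed $w^0$) together with independence does not by itself deliver the product bound; a fully rigorous version needs either an adapted choice of the $g_j$ (your causal surrogate) or a slightly strengthened assumption that the bound of Assumption~\ref{ass:probw} holds for the conditional law of $w_t$ given the remaining data. Flagging that is good judgment, but as a proof of the theorem as the paper proves it, the key probabilistic step is missing.
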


\begin{proof}
For the non-trivial case of $t\geq N_u$ we have $\Pr \{\theta\in\Theta_t\} = \Pr \{\theta\in\Theta_{t} \, | \, \theta\in\Theta_{t-N_u} \} \Pr\{\theta\in\Theta_{t-N_u}\}$ since $\Theta_t\subseteq\Theta_{t-N_u}$ by Lemma~\ref{lem:monotonic_param_set}.
Consider therefore the probability that any given $\theta\in\Theta_{t-N_u}$ satisfying $[M_\Theta]_i (\theta- \theta^\ast) \geq \epsilon$ lies in $\Delta_{t-N_u+1}\cap\cdots\cap\Delta_t$.
Define vectors $g_j$ for $j\in\NN_{[t-N_u,t-1]}$ by
\begin{equation}\label{eq:normal_def}
g_j^\top = -[M_\Theta]_i \biggl(\sum_{k=t-N_u}^{t-1} D_k^\top D_k \biggr)^{-1} D_j^\top .
\end{equation}
%
Assumption \ref{ass:compact_disturbance} implies that, for any given $g_j\in\RR^{n_x}$, there exists a $w_j^0\in\partial\W$ such that $g_j \in\N_\W(w_j^0)$. Accordingly, choose $w_j^0\in\partial\W$ so that $g_j$ in (\ref{eq:normal_def}) satisfies $g_j \in\N_\W(w_j^0)$ for each $j\in\NN_{[t-N_u,t-1]}$. Then 
\begin{equation}\label{eq:nec_normal_def}
g_j^\top \bigl[ D_j (\theta^\ast - \theta) + w_j - w_j^0 \bigr] \leq 0 
\end{equation}
is a necessary condition for $\theta\in\Delta_{j+1}$ due to (\ref{eq:Delta}) and (\ref{eq:normal_cone}). But (\ref{eq:normal_def}) and Assumption~\ref{ass:pe} imply
\[
\sum_{j=t-N_u}^{t-1} g_j^\top\bigl[ D_j(\theta^\ast - \theta) + w_j - w_j^0\bigr] 
= [M_\Theta]_i(\theta - \theta^\ast) + \sum_{j=t-N_u}^{t-1} g_j^\top (w_j-w_j^0)
\geq 
[M_\Theta]_i(\theta - \theta^\ast) - \frac{N_u\tau}{\beta} \max_{j\in\NN_{[t-N_u,t-1]}} \| w_j - w_j^0\|. 
\]
where $[M_\Theta]_i(\theta - \theta^\ast) \geq \epsilon$ 
by assumption, and it follows from (\ref{eq:nec_normal_def}) that $\theta\not\in \bigcap_{j=t-N_u+1}^t \Delta_j$ if $\|w_j - w_j^0\| < \epsilon\beta/(N_u\tau)$ for all $j\in\NN_{[t-N_u,t-1]}$. From Assumption~\ref{ass:probw} and the independence of the sequence $\{w_0,w_1,\ldots\}$ we therefore conclude that
\[
\Pr \{\theta\in\Theta_t\} \leq \biggl\{ 1 - 
\biggl[ \probw \Bigl( \frac{\epsilon\beta}{N_u\tau} \Bigr) \biggr]^{N_u}\biggr\} 
\Pr \{\theta\in\Theta_{t-N_u}\},
\]
and the result follows by applying this inequality $\lfloor t/N_u\rfloor$ times.
\end{proof}

\begin{corollary}\label{cor:fixed_set}
Under Assumptions~\ref{ass:compact_disturbance}, \ref{ass:probw} and \ref{ass:pe}, the fixed complexity parameter set estimate $\Theta_t$ converges to $\{\theta^\ast\}$ with probability~1.
\end{corollary}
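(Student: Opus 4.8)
The plan is to mirror the proof of Corollary~\ref{cor:min_set}, using Theorem~\ref{thm:fixed_set} in place of Theorem~\ref{thm:min_set}. The key point is that the fixed-complexity update guarantees $\theta^\ast\in\Theta_t$ and a nested family $\Theta_0\supseteq\Theta_1\supseteq\cdots$ (Lemma~\ref{lem:monotonic_param_set}), so convergence of $\Theta_t$ to $\{\theta^\ast\}$ can be established by showing that, for every $\theta\in\Theta_0$ with $\theta\neq\theta^\ast$, the event $\{\theta\in\Theta_t\text{ for all }t\}$ has probability zero; letting $\epsilon\to 0$ then eliminates every point other than $\theta^\ast$.

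Concretely, I would proceed as follows. First fix an arbitrary $\theta\in\Theta_0$ with $\theta\neq\theta^\ast$. Since the rows of $M_\Theta$ are normalised and $\Theta_t=\{\theta:M_\Theta\theta\le\mu_t\}$ is compact and contains $\theta^\ast$, and since a nonzero vector $\theta-\theta^\ast$ must have positive inner product with at least one outward normal direction, there is some index $i\in\NN_{[1,r]}$ and some $\epsilon>0$ with $[M_\Theta]_i(\theta-\theta^\ast)\ge\epsilon$. (Here one uses that $\theta^\ast$ lies in every $\Theta_t$, so the hyperplane directions $[M_\Theta]_i$ are expressive enough to separate $\theta$ from $\theta^\ast$ — if $[M_\Theta]_i(\theta-\theta^\ast)\le 0$ for all $i$, then the ray from $\theta^\ast$ through $\theta$ stays in $\Theta_0$, which is impossible for a compact set unless $\theta=\theta^\ast$.) Then Theorem~\ref{thm:fixed_set} gives
\[
\Pr\{\theta\in\Theta_t\} \le \biggl\{1-\Bigl[\probw\Bigl(\tfrac{\epsilon\beta}{N_u\tau}\Bigr)\Bigr]^{N_u}\biggr\}^{\lfloor t/N_u\rfloor},
\]
and since $\probw(\epsilon\beta/(N_u\tau))>0$, the bracketed base is strictly less than $1$, so $\sum_{t=0}^\infty\Pr\{\theta\in\Theta_t\}<\infty$. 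Because $\theta\in\Theta_t$ implies $\theta\in\Theta_{t-1}$, the Borel--Cantelli lemma yields $\Pr\{\theta\in\bigcap_{t=0}^\infty\Theta_t\}=0$.

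To conclude, note that $\bigcap_{t}\Theta_t$ is a nonempty compact convex set (nested intersection of nonempty compacts) always containing $\theta^\ast$. For any $\delta>0$, covering the compact set $\Theta_0\setminus B(\theta^\ast,\delta)$ by finitely many points $\theta^{(1)},\dots,\theta^{(M)}$ (or applying the argument to a countable dense subset and using monotonicity/continuity), the union bound over these finitely many zero-probability events shows $\Pr\{\bigcap_t\Theta_t\not\subseteq B(\theta^\ast,\delta)\}=0$. Letting $\delta\to 0$ through a sequence gives $\bigcap_t\Theta_t=\{\theta^\ast\}$ with probability~1, and since the sets are nested and compact this is equivalent to $\Theta_t\to\{\theta^\ast\}$ (e.g.\ in the Hausdorff metric) with probability~1.

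The only delicate point — and the step I would be most careful about — is the geometric claim that non-closeness of $\theta$ to $\theta^\ast$ in Euclidean norm forces $[M_\Theta]_i(\theta-\theta^\ast)\ge\epsilon$ for some $i$, with $\epsilon$ controlled below in terms of $\|\theta-\theta^\ast\|$; this is where the hypothesis that $M_\Theta$ renders $\Theta_t$ bounded (so the normals positively span $\RR^p$) is essential, and it is what lets us pass from the per-hyperplane statement of Theorem~\ref{thm:fixed_set} to genuine set convergence. The rest is a routine Borel--Cantelli argument identical in structure to Corollary~\ref{cor:min_set}.
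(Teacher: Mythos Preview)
Your proposal is correct and follows essentially the same route as the paper's proof: apply Borel--Cantelli to the bound of Theorem~\ref{thm:fixed_set} exactly as in Corollary~\ref{cor:min_set}, then invoke the standing assumption that $M_\Theta$ is chosen so that $\Theta_t$ is compact to pass from the per-face statement to convergence of the set. The paper compresses the geometric step (``$\theta\neq\theta^\ast$ implies $[M_\Theta]_i(\theta-\theta^\ast)\ge\epsilon$ for some $i$'') and the pointwise-to-setwise passage into a single sentence, whereas you spell both out; your ray argument for the former is precisely the right use of compactness, and your identification of this as the delicate point is accurate.
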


\begin{proof}
By applying the Borel-Cantelli Lemma to  {Theorem~\ref{thm:fixed_set}} it can be shown (analogously to the proof of Corollary~\ref{cor:min_set}) that $\Pr\{\theta\in\bigcap_{t=0}^\infty \Theta_t \} = 0$ if $[M_\Theta]_i (\theta- \theta^\ast) \geq \epsilon$ for some $i\in\NN_{[1,r]}$ and $\epsilon > 0$. Since $M_\Theta$ is assumed to be chosen so that $\Theta_t$ is compact for all $\mu_t$ such that $\Theta_t$ is non-empty, it follows that $\Theta_t\to\{\theta^\ast\}$ as $t\to\infty$ with probability 1.
\end{proof}

\subsection{Inexact disturbance bounds} \label{sec:inexact_noise_bound}
We next consider the case in which the set $\W$ bounding $w_t$ in Assumption~\ref{ass:compact_disturbance} does not satisfy Assumption~\ref{ass:probw}.
Instead, we assume that a compact set $\Wtight$ providing a tight bound on $w_t$ exists but is either unknown or non-polytopic or nonconvex.
We define the unit ball $\B = \{x : \|x\| \leq 1\}$ and use a scalar $\rho$ to characterize the accuracy to which $\W$ approximates $\Wtight$.

\begin{assumption}[Inexact disturbance bounds]\label{ass:hW}
$\Wtight$ is a compact set such that $\Wtight\oplus \rho\B \supseteq \W \supseteq \Wtight$ for some $\rho > 0$, and, for all $w^0\in \partial\Wtight$ and $\epsilon > 0$, the disturbance sequence $\{w_0,w_1,\ldots\}$ satisfies, for all $t\in\NN_{\geq 0}$, $w_t\in\Wtight$ and
$\Pr \bigl\{ \| w_t - w^0 \| < \epsilon \bigr\} \geq \probw (\epsilon)$,
where $\probw(\epsilon) > 0$ whenever $\epsilon > 0$.
\end{assumption}

\begin{remark}
Assumption~\ref{ass:hW} implies that 
$\W\ominus\Wtight \subseteq \rho\B$. As a result, every point in $\W$ can be a distance no greater than $\rho$ from a point in $\Wtight$, i.e.\ 
$
\max_{\hw\in\W}\min_{w\in\Wtight}\|\hw - w\| \leq \rho 
$.
\end{remark}


\begin{theorem}\label{thm:min_set_hat}
If $\Theta_t = \bigcap_{j=1}^{t} \Delta_j \cap \Theta_0$ and Assumptions~\ref{ass:compact_disturbance}, \ref{ass:pe} and~\ref{ass:hW} hold, then for all $\theta\in\Theta_0$ such that $\|\theta - \theta^\ast \| \geq \epsilon + \rho\sqrt{N_u/\beta}$,
for all $t\in\NN_{\geq 0}$ and any $\epsilon >0$, we have
\[
\Pr \{ \theta \in \Theta_t \}  \leq \biggl[ 1 - \probw \bigl(\epsilon \sqrt{\beta/N_u} \bigr) \biggr]^{\lfloor t/N_u \rfloor}  .
\]
\end{theorem}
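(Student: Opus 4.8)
The plan is to mimic the proof of Theorem~\ref{thm:min_set}, but to account for the fact that the disturbance bound $\W$ is only an outer approximation of the tight set $\Wtight$. The key observation is that the unfalsified set $\Delta_t$ from (\ref{eq:delta_t_set}) is built using $\W$, so it is \emph{larger} than the analogous set built using $\Wtight$; hence a parameter $\theta$ is harder to falsify. I would first establish an analogue of Proposition~\ref{prop:min_set}: fix $t$ and $\theta$ with $\|\theta^\ast-\theta\|\ge\epsilon+\rho\sqrt{N_u/\beta}$, and for each $j\in\NN_{[t,t+N_u-1]}$ pick $w_j^0\in\partial\Wtight$ with $D_j(\theta^\ast-\theta)\in\N_{\Wtight}(w_j^0)$. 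As in Proposition~\ref{prop:min_set}, $\theta\notin\Delta_{j+1}$ is guaranteed if $D_j(\theta^\ast-\theta)+w_j\notin\W$, and since $\W\subseteq\Wtight\oplus\rho\B$, this holds whenever $(\theta^\ast-\theta)^\top D_j^\top[D_j(\theta^\ast-\theta)+w_j-w_j^0]>\rho\|D_j(\theta^\ast-\theta)\|$, i.e.\ whenever $\|w_j-w_j^0\|<\|D_j(\theta^\ast-\theta)\|-\rho$.

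The next step is the arithmetic that propagates the $\rho$ slack through the PE bound. By Assumption~\ref{ass:pe}, $\sum_{j=t}^{t+N_u-1}\|D_j(\theta^\ast-\theta)\|^2\ge\beta\|\theta^\ast-\theta\|^2$, so there exists $j^\star\in\NN_{[t,t+N_u-1]}$ with $\|D_{j^\star}(\theta^\ast-\theta)\|\ge\sqrt{\beta/N_u}\,\|\theta^\ast-\theta\|$. Using $\|\theta^\ast-\theta\|\ge\epsilon+\rho\sqrt{N_u/\beta}$ gives $\|D_{j^\star}(\theta^\ast-\theta)\|\ge\epsilon\sqrt{\beta/N_u}+\rho$, hence $\|D_{j^\star}(\theta^\ast-\theta)\|-\rho\ge\epsilon\sqrt{\beta/N_u}$. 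Therefore, if $\|w_{j^\star}-w_{j^\star}^0\|<\epsilon\sqrt{\beta/N_u}$ — an event with probability at least $\probw(\epsilon\sqrt{\beta/N_u})$ by Assumption~\ref{ass:hW} — then $\theta\notin\Delta_{j^\star+1}$, and so $\theta\notin\bigcap_{j=t+1}^{t+N_u}\Delta_j$. This yields the one-window bound $\Pr\{\theta\in\Theta_{t+N_u}\mid\theta\in\Theta_t\}\le 1-\probw(\epsilon\sqrt{\beta/N_u})$.

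Finally I would close the argument exactly as in Theorem~\ref{thm:min_set}: since $\Theta_t=\bigcap_{j=t-N_u+1}^t\Delta_j\cap\Theta_{t-N_u}$ and $\Theta_t\subseteq\Theta_{t-N_u}$, conditioning gives $\Pr\{\theta\in\Theta_t\}\le(1-\probw(\epsilon\sqrt{\beta/N_u}))\Pr\{\theta\in\Theta_{t-N_u}\}$, and iterating $\lfloor t/N_u\rfloor$ times produces the claimed bound. The one subtlety to watch is the choice of $w_j^0$: it must lie on $\partial\Wtight$ (so that Assumption~\ref{ass:hW}'s concentration estimate applies) rather than on $\partial\W$, and one must verify that $D_j(\theta^\ast-\theta)$ does indeed lie in the normal cone to $\Wtight$ at such a point — this follows from compactness of $\Wtight$ exactly as Assumption~\ref{ass:compact_disturbance} was used in Proposition~\ref{prop:min_set}. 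I expect the main obstacle to be purely bookkeeping: tracking the $\rho$ offset carefully through the Cauchy--Schwarz step and confirming that the hypothesis $\|\theta^\ast-\theta\|\ge\epsilon+\rho\sqrt{N_u/\beta}$ is exactly what is needed to absorb it, with no wasted slack.
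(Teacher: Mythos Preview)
Your proposal is correct. The paper actually omits the proof of Theorem~\ref{thm:min_set_hat} entirely, treating it as an evident modification of Proposition~\ref{prop:min_set} and Theorem~\ref{thm:min_set}; your write-up supplies precisely that modification, and the bookkeeping with the $\rho$ offset is handled correctly (the hypothesis $\|\theta-\theta^\ast\|\geq\epsilon+\rho\sqrt{N_u/\beta}$ is exactly what is needed so that $\|D_{j^\star}(\theta^\ast-\theta)\|-\rho\geq\epsilon\sqrt{\beta/N_u}$).

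One minor remark: the paper's proof of the companion result, Theorem~\ref{thm:fixed_set_what}, uses a slightly different device for the same $\rho$ slack --- it first picks $\hat w_j^0\in\partial\W$ so that the separating direction lies in $\N_\W(\hat w_j^0)$, and only then selects $w_j^0\in\partial\Wtight$ with $\|\hat w_j^0-w_j^0\|\leq\rho$. Your route (choosing $w_j^0\in\partial\Wtight$ directly with $D_j(\theta^\ast-\theta)\in\N_{\Wtight}(w_j^0)$ and invoking $\W\subseteq\Wtight\oplus\rho\B$) is equally valid and arguably cleaner; both rely only on compactness of $\Wtight$ to produce a maximizer of the linear functional $w\mapsto (\theta^\ast-\theta)^\top D_j^\top w$, so convexity of $\Wtight$ is not needed.
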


\begin{corollary}\label{cor:min_set_hat}
Under Assumptions~\ref{ass:compact_disturbance}, \ref{ass:pe} and \ref{ass:hW}, 
$\Theta_t = \bigcap_{j=1}^t\Delta_j\cap\Theta_0$ converges to 
$\Theta_\infty \subseteq \{\theta^\ast\} \oplus \rho\sqrt{N_u/\beta}\,\B$ with probability~1.
\end{corollary}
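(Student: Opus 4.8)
\textbf{Proof proposal for Corollary~\ref{cor:min_set_hat}.}

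The plan is to combine Theorem~\ref{thm:min_set_hat} with the Borel--Cantelli Lemma in essentially the same way that Corollary~\ref{cor:min_set} is deduced from Theorem~\ref{thm:min_set}, the only difference being that the ``dead zone'' of radius $\rho\sqrt{N_u/\beta}$ now prevents us from shrinking the estimate all the way down to $\{\theta^\ast\}$. First I would fix an arbitrary $\theta\in\Theta_0$ with $\|\theta-\theta^\ast\| \geq \epsilon + \rho\sqrt{N_u/\beta}$ for some $\epsilon>0$. Theorem~\ref{thm:min_set_hat} gives $\Pr\{\theta\in\Theta_t\} \leq \bigl[1-\probw(\epsilon\sqrt{\beta/N_u})\bigr]^{\lfloor t/N_u\rfloor}$, and since $\probw(\epsilon\sqrt{\beta/N_u})>0$ this is a geometric sequence in $\lfloor t/N_u\rfloor$; hence $\sum_{t=0}^\infty \Pr\{\theta\in\Theta_t\} < \infty$. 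Because the sets are nested ($\Theta_t\subseteq\Theta_{t-1}$, which holds here since $\Theta_t = \bigcap_{j=1}^t\Delta_j\cap\Theta_0$), the events $\{\theta\in\Theta_t\}$ are decreasing, so $\Pr\{\theta\in\bigcap_{t=0}^\infty\Theta_t\} = \lim_{t\to\infty}\Pr\{\theta\in\Theta_t\} = 0$ directly; alternatively Borel--Cantelli gives $\Pr\{\theta\in\Theta_t \text{ infinitely often}\}=0$, which for nested sets is the same statement.

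Next I would upgrade this pointwise (in $\theta$) statement to a statement about the limiting set $\Theta_\infty \defeq \bigcap_{t=0}^\infty \Theta_t$. The limit exists as a set because the $\Theta_t$ are nested, nonempty (they contain $\theta^\ast$ by the argument underlying Lemma~\ref{lem:monotonic_param_set}, noting $\theta^\ast\in\Delta_j$ since $w_{j-1}\in\Wtight\subseteq\W$), and compact (subsets of $\Theta_0$). The claim $\Theta_\infty \subseteq \{\theta^\ast\}\oplus \rho\sqrt{N_u/\beta}\,\B$ with probability one means: with probability one, every $\theta\in\Theta_\infty$ satisfies $\|\theta-\theta^\ast\|\leq \rho\sqrt{N_u/\beta}$. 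Equivalently, with probability one there is no $\theta\in\Theta_\infty$ with $\|\theta-\theta^\ast\| > \rho\sqrt{N_u/\beta}$. To get this from the pointwise result I would invoke a separability/countable-cover argument: the set $\{\theta\in\Theta_0 : \|\theta-\theta^\ast\| > \rho\sqrt{N_u/\beta}\}$ can be written as a countable union $\bigcup_{n\in\NN_{>0}} K_n$, where $K_n = \{\theta\in\Theta_0 : \|\theta-\theta^\ast\| \geq \rho\sqrt{N_u/\beta} + 1/n\}$ is compact. For each $n$, cover $K_n$ by finitely many balls $B(\theta_{n,\ell}, \delta_{n})$ with $\theta_{n,\ell}\in K_n$ and a radius $\delta_n$ small enough (e.g.\ $\delta_n \leq \tfrac{1}{2n}$) that every point of each such ball still has distance $\geq \rho\sqrt{N_u/\beta} + \tfrac{1}{2n}$ from $\theta^\ast$, so the centre $\theta_{n,\ell}$ falls under the hypothesis of Theorem~\ref{thm:min_set_hat} with $\epsilon = \tfrac{1}{2n}$. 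Since $\Theta_t$ eventually excludes each $\theta_{n,\ell}$ almost surely, and there are countably many of them, with probability one $\Theta_\infty$ contains none of the $\theta_{n,\ell}$; I would then argue that if $\Theta_\infty$ contained a point of $K_n$ it would (by nestedness and compactness of each $\Theta_t$, which forces the limit point to persist) have to lie in one of these finitely many balls, contradicting the exclusion of its centre. Taking the countable union over $n$ gives $\Pr\{\Theta_\infty\cap\bigcup_n K_n \neq\emptyset\}=0$, which is the assertion.

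The main obstacle is precisely this last passage from ``each individual far-away parameter is excluded a.s.'' to ``the whole far-away region is excluded a.s.''\ — the union over all $\theta$ with $\|\theta-\theta^\ast\|>\rho\sqrt{N_u/\beta}$ is uncountable, so one cannot naively union the null events. The compactness of $\Theta_0$ (Assumption~\ref{ass:param_uncertainty}) together with continuity of the norm is what rescues the argument, exactly as in Corollary~\ref{cor:min_set}; one must be slightly careful that the excluding event for the ball centre $\theta_{n,\ell}$ really does exclude the whole ball from $\Theta_\infty$ in the limit, which follows because membership in $\Theta_t$ is a closed condition and the $\Theta_t$ decrease, so any persistent point is a limit of points each of which is eventually excluded once $t$ is large — here it is cleanest to note that $\Theta_t$ eventually excludes the open ball $B(\theta_{n,\ell},\delta_n)$ entirely (not just its centre), since the bound in Theorem~\ref{thm:min_set_hat} applies uniformly to every point of that ball with the same $\epsilon$. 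The rest — the geometric-series summability and Borel--Cantelli — is routine and mirrors the earlier corollaries verbatim.
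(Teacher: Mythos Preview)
The paper gives no explicit proof of this corollary; it is understood to follow from Theorem~\ref{thm:min_set_hat} via the same Borel--Cantelli argument that deduces Corollary~\ref{cor:min_set} from Theorem~\ref{thm:min_set}, which is precisely your approach. Your countable-cover discussion is in fact more careful than the paper, whose proof of Corollary~\ref{cor:min_set} dispatches the passage from pointwise exclusion to set-level convergence with only the phrase ``since $\epsilon>0$ is arbitrary.''
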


\begin{theorem}\label{thm:fixed_set_what}
Let Assumptions~\ref{ass:compact_disturbance}, \ref{ass:pe} and~\ref{ass:hW} hold and let $\Theta_t$ be the fixed complexity parameter set defined by (\ref{eq:defn_Theta_set}), (\ref{eq:fixed_set_update}) with Remark~\ref{rem:periodic_update}. Then, for all $\theta\in\Theta_0$ such that  $[M_\Theta]_i (\theta- \theta^\ast) \geq \epsilon + \rho N_u \tau/\beta$ for some $i\in\NN_{[1,r]}$ and any $\epsilon > 0$, we have, for all $t\in\NN_{\geq 0}$,
\[
\Pr \{\theta \in\Theta_t \}
\leq 
\biggl\{ 1 - \biggl[ p_{w} \Bigl( \frac{\epsilon\beta}{ N_u\tau  } \Bigr) \biggr]^{N_u}\biggr\}^{\lfloor t/N_u \rfloor} .
\]
\end{theorem}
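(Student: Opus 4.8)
\emph{Proof strategy.}\ The statement is the fixed‑complexity counterpart of Theorem~\ref{thm:min_set_hat}, so my plan is to run the argument of Theorem~\ref{thm:fixed_set} essentially verbatim while inserting the $\rho$‑correction that accounts for the gap between the bounding set $\W$ used in the update~(\ref{eq:fixed_set_update}) and the tight set $\Wtight$ of Assumption~\ref{ass:hW}. As in Theorem~\ref{thm:fixed_set}, only $t\geq N_u$ is nontrivial; I would use Lemma~\ref{lem:monotonic_param_set} to factor $\Pr\{\theta\in\Theta_t\}=\Pr\{\theta\in\Theta_t\mid\theta\in\Theta_{t-N_u}\}\Pr\{\theta\in\Theta_{t-N_u}\}$ and bound the conditional factor. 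Fixing a row $i$ with $[M_\Theta]_i(\theta-\theta^\ast)\geq\epsilon+\rho N_u\tau/\beta$, I would define $g_j$, $j\in\NN_{[t-N_u,t-1]}$, exactly as in~(\ref{eq:normal_def}); Assumption~\ref{ass:pe} together with $\|[M_\Theta]_i\|=1$ then gives $\|g_j\|\leq\tau/\beta$ and $\sum_{j=t-N_u}^{t-1}g_j^\top D_j(\theta^\ast-\theta)=-[M_\Theta]_i(\theta^\ast-\theta)$, just as in that proof.

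The only new ingredient is an elementary geometric observation: if $w^0\in\partial\Wtight$ and $g\in\N_{\Wtight}(w^0)$, then since $\W\subseteq\Wtight\oplus\rho\B$ (Assumption~\ref{ass:hW}) every $w\in\W$ obeys $g^\top(w-w^0)\leq\rho\|g\|$. Choosing, by compactness of $\Wtight$, a point $w_j^0\in\partial\Wtight$ with $g_j\in\N_{\Wtight}(w_j^0)$, and using $w_j\in\Wtight$ from Assumption~\ref{ass:hW}, the membership $D_j(\theta^\ast-\theta)+w_j\in\W$ needed for $\theta\in\Delta_{j+1}$ now only yields the relaxed inequality $g_j^\top[D_j(\theta^\ast-\theta)+w_j-w_j^0]\leq\rho\|g_j\|$. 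Summing over $j\in\NN_{[t-N_u,t-1]}$ and using the two identities above gives
\[
[M_\Theta]_i(\theta-\theta^\ast)\;\leq\;\frac{N_u\tau}{\beta}\max_{j\in\NN_{[t-N_u,t-1]}}\|w_j-w_j^0\|\;+\;\frac{\rho N_u\tau}{\beta},
\]
so that, under $[M_\Theta]_i(\theta-\theta^\ast)\geq\epsilon+\rho N_u\tau/\beta$, the event $\{\|w_j-w_j^0\|<\epsilon\beta/(N_u\tau)\text{ for all }j\in\NN_{[t-N_u,t-1]}\}$ is incompatible with $\theta\in\bigcap_{j=t-N_u+1}^{t}\Delta_j$. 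As in Theorem~\ref{thm:fixed_set}, this exclusion is uniform over all $\theta$ satisfying the hypothesis, hence it bounds $[\mu_t]_i$ and thus rules the given $\theta$ out of $\Theta_t$ on this disturbance event ($\Theta_t$ being interpreted through Remark~\ref{rem:periodic_update}).

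To close, I would lower bound $\Pr\{\|w_j-w_j^0\|<\epsilon\beta/(N_u\tau)\}\geq\probw(\epsilon\beta/(N_u\tau))$ using Assumption~\ref{ass:hW}, invoke independence of $\{w_0,w_1,\ldots\}$ to obtain $\Pr\{\theta\notin\bigcap_{j=t-N_u+1}^{t}\Delta_j\}\geq[\probw(\epsilon\beta/(N_u\tau))]^{N_u}$, hence $\Pr\{\theta\in\Theta_t\mid\theta\in\Theta_{t-N_u}\}\leq1-[\probw(\epsilon\beta/(N_u\tau))]^{N_u}$, and iterate this bound $\lfloor t/N_u\rfloor$ times. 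I expect the only point requiring genuine care to be the geometric step — in particular verifying that the slack produced there is $\rho\|g_j\|$ rather than simply $\rho$, so that the summation reproduces exactly the threshold shift $\rho N_u\tau/\beta$ appearing in the statement; the remainder is a line‑by‑line transcription of the proof of Theorem~\ref{thm:fixed_set} with $\partial\W$ replaced by $\partial\Wtight$.
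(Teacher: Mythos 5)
Your proposal is correct and follows essentially the same route as the paper's proof: the same vectors $g_j$ from (\ref{eq:normal_def}), the same row-wise exclusion argument with the bounds $\|g_j\|\leq\tau/\beta$, and the same factorization over blocks of length $N_u$ with independence of $\{w_0,w_1,\ldots\}$ iterated $\lfloor t/N_u\rfloor$ times. The only (immaterial) difference is how the $\rho$-slack enters: you anchor the normal cones at $w_j^0\in\partial\Wtight$ and relax the separating inequality to $g_j^\top(w-w_j^0)\leq\rho\|g_j\|$ via $\W\subseteq\Wtight\oplus\rho\B$, whereas the paper takes $\hat{w}_j^0\in\partial\W$ with $g_j\in\N_\W(\hat{w}_j^0)$ and a nearby $w_j^0\in\partial\Wtight$ with $\|\hat{w}_j^0-w_j^0\|\leq\rho$ — both yield exactly the threshold shift $\rho N_u\tau/\beta$ and the stated bound.
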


\begin{proof}
A bound on the probability that $\theta\in\Theta_{t-N_u}$ satisfying $[M_\Theta]_i (\theta- \theta^\ast) \geq \epsilon + \rho N_u \tau/\beta$ lies in $\Delta_{t-N_u+1}\cap\cdots\cap\Delta_t$ can be found using the same argument as the proof of Theorem~\ref{thm:fixed_set}. Thus choose $\hat{w}_j^0\in\partial\W$, $j\in\NN_{[t-N_u,t-1]}$ so that $g_j \in\N_{\W}(\hat{w}_j^0)$, where
\[
g_j^\top = -[M_\Theta]_i \biggl(\sum_{k=t-N_u}^{t-1} D_k^\top D_k \biggr)^{-1} D_j^\top ,
\]
and pick $w_j^0 \in\partial\Wtight$ so that $\|\hat{w}_j^0 - w_j^0\| \leq \rho$ for each $j\in\NN_{[t-N_u,t-1]}$. Then, from $[M_\Theta]_i(\theta - \theta^\ast) \geq \epsilon + \rho N_u \tau/\beta$ and Assumptions~\ref{ass:pe} and~\ref{ass:hW} we have
\begin{align*}
\sum_{j=t-N_u}^{t-1} g_j^\top\bigl[ D_j(\theta^\ast - \theta) + w_j - \hat{w}_j^0\bigr] 
&= [M_\Theta]_i(\theta - \theta^\ast) + \sum_{j=t-N_u}^{t-1} g_j^\top (w_j - w_j^0) + \sum_{j=t-N_u}^{t-1} g_j^\top (w_j^0 - \hat{w}_j^0) \\
& \geq \epsilon + \rho \frac{N_u \tau}{\beta} - \frac{N_u\tau}{\beta} \max_{j\in\NN_{[t-N_u,t-1]}} \| w_j - w_j^0\| 
- \rho \frac{N_u\tau}{\beta} .
\end{align*}
Therefore, if $\|w_j - w_j^0\| < \epsilon\beta/(N_u\tau)$ for all $j\in\NN_{[t-N_u,t-1]}$, then $\sum_{j=t-N_u}^{t-1} g_j^\top \bigl[ D_j (\theta^\ast - \theta) + w_j - \hat{w}_j^0 \bigr] > 0$ which implies $\theta\notin \bigcap_{j=t-N_u+1}^t \Delta_j$.
From Assumption~\ref{ass:hW} and the independence of the sequence $\{w_0,w_1,\ldots\}$ we therefore conclude that
\[
\Pr \{\theta\in\Theta_t\} \leq \biggl\{ 1 - 
\biggl[ \probw \Bigl( \frac{\epsilon\beta}{N_u\tau} \Bigr) \biggr]^{N_u}\biggr\} 
\Pr \{\theta\in\Theta_{t-N_u}\},
\]
and the result follows by applying this inequality $\lfloor t/N_u\rfloor$ times.
\end{proof}

\begin{corollary}\label{cor:fixed_set_hat}
Under Assumptions~\ref{ass:compact_disturbance}, \ref{ass:pe} and \ref{ass:hW},
the fixed complexity parameter set defined by (\ref{eq:defn_Theta_set}), (\ref{eq:fixed_set_update}) and Remark~\ref{rem:periodic_update} converges with probability~1 to a subset of $\{\theta: M_\Theta (\theta - \theta^\ast) \leq \rho N_u \tau/\beta\} $.
\end{corollary}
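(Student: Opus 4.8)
The plan is to deduce Corollary~\ref{cor:fixed_set_hat} from Theorem~\ref{thm:fixed_set_what} by exactly the same Borel--Cantelli argument used to pass from Theorem~\ref{thm:fixed_set} to Corollary~\ref{cor:fixed_set}, but keeping careful track of the additive offset $\rho N_u\tau/\beta$ that appears in the hypothesis of Theorem~\ref{thm:fixed_set_what}. Concretely, fix an index $i\in\NN_{[1,r]}$ and a scalar $\delta>0$, and consider any $\theta\in\Theta_0$ with $[M_\Theta]_i(\theta-\theta^\ast)\ge \rho N_u\tau/\beta+\delta$. Applying Theorem~\ref{thm:fixed_set_what} with $\epsilon=\delta$ gives
\[
\Pr\{\theta\in\Theta_t\}\le\Bigl\{1-\bigl[p_w(\delta\beta/(N_u\tau))\bigr]^{N_u}\Bigr\}^{\lfloor t/N_u\rfloor},
\]
and since $p_w(\delta\beta/(N_u\tau))>0$ the right-hand side is summable over $t$. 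Because $\theta\in\Theta_t$ forces $\theta\in\Theta_{t-1}$ (Lemma~\ref{lem:monotonic_param_set}), Borel--Cantelli yields $\Pr\{\theta\in\bigcap_{t\ge 0}\Theta_t\}=0$.

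**Next I would** upgrade this pointwise-in-$\theta$ statement to a statement about the limiting set. Let $S=\{\theta: M_\Theta(\theta-\theta^\ast)\le \rho N_u\tau/\beta\}$. The complement of $S$ (intersected with $\Theta_0$) is covered by the sets $E_{i,n}=\{\theta\in\Theta_0 : [M_\Theta]_i(\theta-\theta^\ast)\ge \rho N_u\tau/\beta+1/n\}$ over $i\in\NN_{[1,r]}$ and $n\in\NN_{>0}$; this is a countable family. For each such $E_{i,n}$ pick a countable dense subset; for every point in it the previous paragraph gives probability zero of lying in $\Theta_\infty:=\bigcap_t\Theta_t$. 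Since $\Theta_\infty$ is (almost surely) a nonincreasing intersection of compact polytopes, hence compact, and since $\theta^\ast\in\Theta_\infty$ always (Lemma~\ref{lem:monotonic_param_set}), I can argue that if $\Theta_\infty\not\subseteq S$ then $\Theta_\infty$ must contain a point of one of the open-ish slabs $\{[M_\Theta]_i(\theta-\theta^\ast)>\rho N_u\tau/\beta+1/n\}$, and by compactness/convexity of $\Theta_\infty$ it then contains one of the chosen dense points of the corresponding $E_{i,n}$ — a probability-zero event. Taking the union over the countable family preserves measure zero, so $\Pr\{\Theta_\infty\subseteq S\}=1$, which is the claim.

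**The main obstacle** is the last step: Theorem~\ref{thm:fixed_set_what} only controls $\Pr\{\theta\in\Theta_t\}$ for each \emph{individual} $\theta$, whereas the corollary asserts set containment, and there are uncountably many candidate $\theta$. The resolution is the standard countable-dense-subset device sketched above, exploiting that $\Theta_\infty$ is a (random) closed convex set and that the half-space directions $[M_\Theta]_i$ are finitely many and fixed a priori — so the "bad" region is a finite union of slabs, each exhausted from inside by countably many points. One should also note $\tau<\infty$ and $\beta>0$ (Assumption~\ref{ass:pe}) so that $\rho N_u\tau/\beta$ is a well-defined finite radius, and that $S$ is nonempty (it contains $\theta^\ast$), so the statement is not vacuous. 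I would phrase the writeup as: apply Theorem~\ref{thm:fixed_set_what} and Borel--Cantelli to get each $\theta$ outside $S$ almost surely absent from $\Theta_\infty$, then invoke a countable-union argument over a dense subset of the (finitely many) defining slabs of $\Theta_0\setminus S$ together with compactness of $\Theta_\infty$ to conclude $\Theta_\infty\subseteq S$ with probability one.
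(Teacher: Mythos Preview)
Your overall strategy---apply Theorem~\ref{thm:fixed_set_what} and Borel--Cantelli to obtain $\Pr\{\theta\in\Theta_\infty\}=0$ for each fixed $\theta$ outside the target set $S=\{\theta: M_\Theta(\theta-\theta^\ast)\le \rho N_u\tau/\beta\}$, then upgrade to a set-containment statement---is precisely the route the paper (implicitly) takes, paralleling the proofs of Corollaries~\ref{cor:min_set} and~\ref{cor:fixed_set}.

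However, your upgrade step has a gap. You argue that if $\Theta_\infty\not\subseteq S$ then, by compactness and convexity, $\Theta_\infty$ must contain one of the pre-chosen countable dense points of some slab $E_{i,n}$. This is not true in general: $\Theta_\infty$ is a \emph{random} closed convex set and may well be lower-dimensional (a line segment, say), in which case it will generically miss every point of a countable subset of the ambient slab that was fixed in advance. Density in $E_{i,n}$ does not imply density in $\Theta_\infty\cap E_{i,n}$, so the countable-union argument does not close as written.

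The clean fix uses structure you have not exploited: $\Theta_t$ and $S$ are cut out by the \emph{same} matrix $M_\Theta$, so $\Theta_\infty\subseteq S$ reduces to the $r$ scalar conditions $\max_{\theta\in\Theta_\infty}[M_\Theta]_i\theta\le [M_\Theta]_i\theta^\ast+\rho N_u\tau/\beta$. Moreover, inspecting the proof of Theorem~\ref{thm:fixed_set_what}, the excluding event there (namely $\|w_j-w_j^0\|<\epsilon\beta/(N_u\tau)$ for all $j$ in the window) depends on $i$ and $\epsilon$ but \emph{not} on the particular $\theta$; when it occurs, every $\theta$ with $[M_\Theta]_i(\theta-\theta^\ast)\ge\epsilon+\rho N_u\tau/\beta$ is simultaneously removed from $\bigcap_j\Delta_j$, and hence---via the minimum-volume update~(\ref{eq:fixed_set_update})---from $\Theta_t$. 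Thus the theorem in fact bounds $\Pr\bigl\{\max_{\theta\in\Theta_t}[M_\Theta]_i(\theta-\theta^\ast)\ge\epsilon+\rho N_u\tau/\beta\bigr\}$ by the same geometric expression, and Borel--Cantelli applied to these finitely many scalar events (over rational $\epsilon\downarrow 0$) yields $\Theta_\infty\subseteq S$ almost surely without any dense-subset device.
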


\subsection{System with measurement noise} 
Consider the system model with an unknown parameter vector $\theta^\ast$ and measurement noise $s_t$:
\begin{subequations}\label{eq:noisy_measurements}
\begin{align}
x_{t+1} = &\ A(\theta^\ast)x_t+B(\theta^\ast)u_t + w_t \\
y_{t} = &\ x_t+s_t 
\end{align}
\end{subequations}
where $y_t \in \RR^{n_x}$ is a measurement (or state estimate) and the noise sequence $\{s_0,s_1,\ldots\}$ has independent elements satisfying $s_t  \in \S$ for all $t \in \NN_{\geq 0}$.

\begin{assumption}[Measurement noise bounds]\label{ass:setS}
$\S$ is a  compact convex polytope with vertex representation $\S = \Co\{s^{(1)},\ldots,s^{(h)}\}$.
\end{assumption}


Due to the measurement noise, the unfalsified parameter set must be constructed at each time $t\in\NN_{\geq 0}$ using the available measurements $y_t$, $y_{t-1}$, the known control input $u_{t-1}$, and sets $\W$ and $\S$ bounding the disturbance and the measurement noise.
To be consistent with (\ref{eq:noisy_measurements}), $\theta^\ast$ must clearly lie in the set $\{\theta : y_t - D(y_{t-1}-s_{t-1},u_{t-1})\theta - d(y_{t-1}-s_{t-1},u_{t-1}) \in \W \oplus \S\}$, and the smallest unfalsified parameter set based on this information is given by 
\begin{subequations}\label{eq:mn_unfalsified_set}
\begin{gather}
\Delta_t = \Co\{\Delta_t^{(1)},\ldots,\Delta_t^{(h)}\} ,
\\
\Delta_t^{(j)} = \{\theta : y_{t}- D(y_{t-1} - s^{(j)},u_{t-1}) \theta - d(y_{t-1}-s^{(j)},u_{t-1})  \in \W\oplus \S\}\ \ \forall j \in \NN_{[1,h]}.
\end{gather}
\end{subequations}
Thus Assumption~\ref{ass:setS} implies that the unfalsified set $\Delta_t$ is a convex polytope and the parameter set $\Theta_t$ can be estimated using, for example, the update law (\ref{eq:defn_Theta_set}), (\ref{eq:fixed_set_update}) if $\S$ is known.

\begin{assumption}[Tight measurement noise and disturbance bounds]\label{ass:probw_s}
For all $w^0\in\partial\W$, $s^0\in\partial\S$ and $\epsilon > 0$ we have
\[
\Pr\Biggl\{ \biggl\| \begin{bmatrix} w_t - w^0 \\ s_t - s^0 \end{bmatrix} \biggr\| < \epsilon \Biggr\} \geq  p_{w,s} (\epsilon )
\]
where $p_{w,s}(\epsilon) > 0$ whenever $\epsilon > 0$.
\end{assumption}

Given Assumptions \ref{ass:setS} and \ref{ass:probw_s}, the results of Sections \ref{sec:min_set} and \ref{sec:fixed_set} apply with minor modifications. 
Define $\xi_t = w_t + s_t$, then Assumption~\ref{ass:probw_s} implies 
\[
\Pr \bigl\{ \|\xi_t - \xi_t^0\| < \epsilon \bigr\} \geq p_{w,s}\bigl(\epsilon/\sqrt{2}\bigr)
\]
for any given $\xi_t^0 = w_t^0 + s_t^0$ with $w_t^0\in\partial \W$ and $s_t^0 \in \partial \S$. 
This implies the following straightforward extensions of Theorems~\ref{thm:min_set} and~\ref{thm:fixed_set} and Corollaries~\ref{cor:min_set} and~\ref{cor:fixed_set}.

\begin{corollary}\label{cor:min_set_v}
Let Assumptions~\ref{ass:compact_disturbance}, \ref{ass:pe}, \ref{ass:setS} and~\ref{ass:probw_s} hold and $\Theta_t = \bigcap_{j = 1}^t \Delta_j \cap \Theta_0$, with $\Delta_j$ given by (\ref{eq:mn_unfalsified_set}). Then for all $\theta\in\Theta_0$ such that $\|\theta - \theta^\ast \| \geq \epsilon$, for all $t\in\NN_{\geq 0}$ and all $\epsilon > 0$, we have
\[
\Pr \{ \theta \in \Theta_t \}  \leq \biggl[ 1 - p_{w,s} \Bigl( \epsilon \sqrt{\frac{\beta}{2N_u}} \Bigr) \biggr]^{\lfloor t/N_u \rfloor} .
\]
\end{corollary}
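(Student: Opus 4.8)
The plan is to transcribe the proof of Theorem~\ref{thm:min_set}, and hence of the underlying Proposition~\ref{prop:min_set}, replacing the disturbance $w_t$ by the aggregated quantity $\xi_t \defeq w_t + s_t$ and the bounding set $\W$ by $\W\oplus\S$. The only genuinely new ingredient is the estimate recorded just before the statement, namely that $\Pr\{\|\xi_t-\xi^0\|<\epsilon\}\geq p_{w,s}(\epsilon/\sqrt 2)$ for every $\xi^0=w^0+s^0$ with $w^0\in\partial\W$ and $s^0\in\partial\S$; the factor $1/\sqrt 2$ is exactly what turns the argument $\epsilon\sqrt{\beta/N_u}$ of Proposition~\ref{prop:min_set} into $\epsilon\sqrt{\beta/(2N_u)}$ here, so everything downstream of that substitution should go through essentially verbatim.

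Concretely, first I would rewrite the unfalsified set. Using $x_{t-1}=y_{t-1}-s_{t-1}$, the model (\ref{eq:model_sec5}), and the affine dependence of $D$ and $d$ on their first argument, each vertex set in (\ref{eq:mn_unfalsified_set}) becomes
\[
\Delta_t^{(j)} = \{\theta : D_{t-1}(\theta^\ast-\theta) + A(\theta)(s^{(j)}-s_{t-1}) + w_{t-1} + s_t \in \W\oplus\S\},
\]
with $D_{t-1}=D(x_{t-1},u_{t-1})$ the true regressor of Assumption~\ref{ass:pe}. Next, as in the proof of Proposition~\ref{prop:min_set}, given $\theta$ with $\|\theta^\ast-\theta\|\geq\epsilon$, Assumption~\ref{ass:pe} yields $\sum_{k=t}^{t+N_u-1}\|D_k(\theta^\ast-\theta)\|^2\geq\beta\|\theta^\ast-\theta\|^2$, so some $k$ in the window has $\|D_k(\theta^\ast-\theta)\|\geq\epsilon\sqrt{\beta/N_u}$. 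Taking the direction $g$ collinear with $D_k(\theta^\ast-\theta)$, let $w^0$ attain $\max_{w\in\W}g^\top w$ and $s^0$ attain $\max_{s\in\S}g^\top s$; then $w^0\in\partial\W$, $s^0\in\partial\S$, and $g\in\N_\W(w^0)\cap\N_\S(s^0)=\N_{\W\oplus\S}(w^0+s^0)$, so by (\ref{eq:normal_cone}) the event $\|(w_k+s_{k+1})-(w^0+s^0)\|<\epsilon\sqrt{\beta/N_u}$ forces $\theta\notin\Delta_{k+1}$, and by the estimate above together with the independence of $(w_k,s_{k+1})$ from the data available at time $k$ it has conditional probability at least $p_{w,s}(\epsilon\sqrt{\beta/(2N_u)})$. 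Finally, exactly as in the proof of Theorem~\ref{thm:min_set}, writing $\Pr\{\theta\in\Theta_t\}=\Pr\{\theta\in\Theta_t\mid\theta\in\Theta_{t-N_u}\}\,\Pr\{\theta\in\Theta_{t-N_u}\}$, bounding the conditional factor by $1-p_{w,s}(\epsilon\sqrt{\beta/(2N_u)})$, and iterating $\lfloor t/N_u\rfloor$ times gives the claim.

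The step that needs care — and the main obstacle — is that, unlike the single polytope of Section~\ref{sec:min_set}, $\Delta_t$ in (\ref{eq:mn_unfalsified_set}) is the convex hull of the $h$ vertex polytopes $\Delta_t^{(j)}$, each carrying its own shift $A(\theta)(s^{(j)}-s_{t-1})$, so to conclude $\theta\notin\Delta_t$ one must separate $\theta$ from all $h$ of them with the single hyperplane chosen above. Since $s^{(j)}\mapsto A(\theta)(s^{(j)}-s_{t-1})$ is affine it is extremized over the vertices on $\partial\S$, so its contribution is controlled by the support function of $\S$, and the inflation of the disturbance set to $\W\oplus\S$ is precisely what is designed to absorb it; checking that this absorption costs only the factor $\sqrt 2$ rather than a worse, $\S$-dependent constant (which would instead force an $\epsilon$-floor of the kind appearing in Corollary~\ref{cor:min_set_hat}) is the one point that genuinely requires the vertex structure. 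The fixed-complexity and inexact-disturbance variants mentioned around Corollary~\ref{cor:min_set_v} would follow by the same transcription applied to Theorems~\ref{thm:fixed_set} and~\ref{thm:fixed_set_what}.
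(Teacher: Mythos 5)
Your overall route is the paper's own: the paper gives no separate proof of Corollary~\ref{cor:min_set_v} beyond defining $\xi_t=w_t+s_t$, observing $\Pr\{\|\xi_t-\xi_t^0\|<\epsilon\}\geq p_{w,s}(\epsilon/\sqrt{2})$, and asserting that Theorem~\ref{thm:min_set} carries over with $\W$ replaced by $\W\oplus\S$; your transcription and your account of where the factor $\sqrt{2}$ comes from match that exactly. The problem is the step you yourself single out as the main obstacle: the absorption argument you sketch does not close it. Writing the vertex residual as you do, $D_k(\theta^\ast-\theta)+A(\theta)(s^{(j)}-s_k)+w_k+s_{k+1}$, and testing against $g=D_k(\theta^\ast-\theta)$ with $w^0,s^0$ the support points of $\W$ and $\S$, the inflation $\W\oplus\S$ contributes exactly $g^\top w^0+g^\top s^0$ on the right-hand side, and both of these are already consumed in offsetting $w_k$ and the output noise $s_{k+1}$; nothing remains to absorb $g^\top A(\theta)(s^{(j)}-s_k)$. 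That term lies in $g^\top A(\theta)(\S\oplus(-\S))$, not in the support of $\S$, can be of order $\|g\|\,\|A(\theta)\|\,\mathrm{diam}(\S)$ with either sign, is fixed by $s_k$ and the vertex $s^{(j)}$ rather than by the probability event on $(w_k,s_{k+1})$, and cannot be small for all $j$ simultaneously. Hence the assertion that the event $\|(w_k+s_{k+1})-(w^0+s^0)\|<\epsilon\sqrt{\beta/N_u}$ ``forces $\theta\notin\Delta_{k+1}$'' is not established, and the claim that the cost is only $\sqrt{2}$ rather than an $\S$-dependent constant is exactly what is missing; without it one expects an $\epsilon$-floor of the kind in Theorem~\ref{thm:min_set_hat}, not the clean bound stated.

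There is a second, related gap in the separation itself: the necessary condition $g^\top r_j(\vartheta)\leq g^\top(w^0+s^0)$ defines, for each $j$, a half-space in parameter space with normal $-D(y_k-s^{(j)},u_k)^\top g$, which depends on $j$. Violating each of these conditions excludes $\theta$ from each $\Delta_{k+1}^{(j)}$ but not from the convex hull $\Delta_{k+1}=\Co\{\Delta_{k+1}^{(1)},\ldots,\Delta_{k+1}^{(h)}\}$; you need a single separating functional valid for all $j$, or an argument directly on convex combinations $\theta=\sum_j\lambda_j\vartheta_j$ with $\vartheta_j\in\Delta_{k+1}^{(j)}$ (where the cross terms $\sum_j\lambda_j A(\vartheta_j)s^{(j)}$ appear and must be controlled). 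To be fair, the paper itself calls the extension ``straightforward'' and does not address either point, so your proposal is faithful to the paper's level of detail for the easy part ($\xi_t=w_t+s_t$ and the $\sqrt{2}$ estimate); but the step you flag is precisely where a complete proof needs a new argument rather than a transcription of Proposition~\ref{prop:min_set} and Theorem~\ref{thm:min_set}.
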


\begin{corollary}\label{cor:fixed_set_v}
Let Assumptions~\ref{ass:compact_disturbance}, \ref{ass:pe}, \ref{ass:setS} and~\ref{ass:probw_s} hold and let $\Theta_t$ be the fixed complexity parameter set defined by (\ref{eq:defn_Theta_set}), (\ref{eq:fixed_set_update}) with Remark~\ref{rem:periodic_update} and (\ref{eq:mn_unfalsified_set}). Then for all $\theta\in\Theta_0$ such that  $[M_\Theta]_i (\theta- \theta^\ast) \geq \epsilon$ for some $i\in\NN_{[1,r]}$ and any $\epsilon > 0$, we have, for all $t\in\NN_{\geq 0}$,
\[
\Pr \{ \theta\in\Theta_t \} \leq 
\biggl\{ 1 - \biggl[ p_{w,s} \Bigl( \frac{\epsilon\beta}{ \sqrt{2} N_u \tau} \Bigr) \biggr]^{N_u}\biggr\}^{\lfloor t/{N_u} \rfloor} .
\]
\end{corollary}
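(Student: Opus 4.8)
The plan is to replay the proof of Theorem~\ref{thm:fixed_set}, making the two structural substitutions dictated by the measurement‑noise model~(\ref{eq:noisy_measurements}): the disturbance set $\W$ is everywhere replaced by the Minkowski sum $\W\oplus\S$ that appears in the unfalsified sets~(\ref{eq:mn_unfalsified_set}), and the single disturbance realisation is replaced by the combined quantity $\xi=w+s$ introduced before the statement, for which Assumption~\ref{ass:probw_s} supplies the tight‑bound property $\Pr\{\|\xi-\xi^0\|<\epsilon\}\ge p_{w,s}(\epsilon/\sqrt2)$ at any $\xi^0=w^0+s^0$ with $w^0\in\partial\W$, $s^0\in\partial\S$. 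Exactly as before, the monotonicity $\Theta_t\subseteq\Theta_{t-N_u}$ --- which holds because the update~(\ref{eq:fixed_set_update}) with $\Delta_j$ given by~(\ref{eq:mn_unfalsified_set}) still intersects with $\Theta_{t-N_u}$, and $\theta^\ast$ remains feasible by the construction of~(\ref{eq:mn_unfalsified_set}) --- lets me write $\Pr\{\theta\in\Theta_t\}=\Pr\{\theta\in\Theta_t\mid\theta\in\Theta_{t-N_u}\}\,\Pr\{\theta\in\Theta_{t-N_u}\}$ and iterate $\lfloor t/N_u\rfloor$ times, so the whole argument reduces to bounding the one‑window conditional probability by $1-\bigl[p_{w,s}\bigl(\epsilon\beta/(\sqrt2\,N_u\tau)\bigr)\bigr]^{N_u}$.

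For the one‑window bound I would follow the template of Theorem~\ref{thm:fixed_set}. Using $y_k-s_k=x_k$ together with the affine dependence of $D$ and $d$ on the state, each membership condition $\theta\in\Delta_{j+1}$ yields, at the relevant witness vertex of $\S$, a linear inequality $g^\top\bigl[D_j(\theta^\ast-\theta)+\xi_j-\xi_j^0\bigr]\le0$ for any $g\in\N_{\W\oplus\S}(\xi_j^0)$, where $D_j=D(x_j,u_j)$ is the true regressor to which Assumption~\ref{ass:pe} applies; here I use that $\N_{\W\oplus\S}(w_j^0+s_j^0)=\N_\W(w_j^0)\cap\N_\S(s_j^0)$ and that, for any $g$, compactness of $\W$ and $\S$ (Assumptions~\ref{ass:compact_disturbance} and~\ref{ass:setS}) provides such boundary points. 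Choosing, as in~(\ref{eq:normal_def}), $g_j^\top=-[M_\Theta]_i\bigl(\sum_{k=t-N_u}^{t-1}D_k^\top D_k\bigr)^{-1}D_j^\top$ and $\xi_j^0$ so that $g_j\in\N_{\W\oplus\S}(\xi_j^0)$, summation over $j\in\NN_{[t-N_u,t-1]}$ gives $\sum_j g_j^\top[D_j(\theta^\ast-\theta)+\xi_j-\xi_j^0]=[M_\Theta]_i(\theta-\theta^\ast)+\sum_j g_j^\top(\xi_j-\xi_j^0)\ge\epsilon-(N_u\tau/\beta)\max_j\|\xi_j-\xi_j^0\|$. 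Hence, whenever $\|\xi_j-\xi_j^0\|<\epsilon\beta/(N_u\tau)$ for every $j$ in the window, this sum is strictly positive, so at least one necessary inequality is violated and the $i$th support value of $\bigcap_{j=t-N_u+1}^t\Delta_j\cap\Theta_{t-1}$ --- that is, $[\mu_t]_i$ --- is strictly less than $[M_\Theta]_i\theta$, i.e.\ $\theta\notin\Theta_t$. Since $\|\xi_j-\xi_j^0\|\le\sqrt2\,\|(w_j-w_j^0,\,s_j-s_j^0)\|$, Assumption~\ref{ass:probw_s} bounds the probability of each of these events below by $p_{w,s}\bigl(\epsilon\beta/(\sqrt2\,N_u\tau)\bigr)$, and the independence of $\{(w_t,s_t)\}$ over the $N_u$ steps of the window gives $\Pr\{\theta\notin\Theta_t\mid\theta\in\Theta_{t-N_u}\}\ge\bigl[p_{w,s}(\epsilon\beta/(\sqrt2\,N_u\tau))\bigr]^{N_u}$; iterating yields the claim.

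The step I expect to cause the most trouble is the bookkeeping in the middle paragraph, namely reconciling the convex‑hull form $\Delta_t=\Co\{\Delta_t^{(1)},\dots,\Delta_t^{(h)}\}$ of~(\ref{eq:mn_unfalsified_set}) --- forced on us because the unknown regressor noise $s_{t-1}$ enters $D(y_{t-1}-s^{(j)},u_{t-1})$ --- with the clean single linear certificate $g^\top(\cdot)\le0$ that drives Theorem~\ref{thm:fixed_set}. A point of $\Co\{\Delta_t^{(j)}\}$ need not lie in any individual $\Delta_t^{(j)}$, so the necessary inequality has to be extracted by evaluating $g^\top(\cdot)$ along the convex combination and checking that the vertex enumeration over $\S$ introduces no term beyond the $\W\mapsto\W\oplus\S$ and $\sqrt2$ modifications already absorbed above; the key fact to verify is that the slack carried by $\W\oplus\S$ exactly accommodates the spread $s_{t-1}-s^{(j)}$ in the noisy regressor. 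Once this is in place, the remaining estimates (the $\tau$‑ and $\beta$‑bounds, the telescoping over windows) are word‑for‑word those of Theorems~\ref{thm:fixed_set} and~\ref{thm:min_set}, so the extension is indeed routine; and if one wishes, the almost‑sure convergence $\Theta_t\to\{\theta^\ast\}$ follows as in Corollary~\ref{cor:fixed_set} by summing the resulting geometric bound and applying the Borel--Cantelli lemma.
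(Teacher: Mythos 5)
Your argument is exactly the paper's: the paper offers no separate proof of this corollary, asserting only that Theorem~\ref{thm:fixed_set} extends ``with minor modifications'' once $w_t$ is replaced by the combined noise $\xi_t=w_t+s_t$, $\W$ by $\W\oplus\S$, and $p_w(\epsilon)$ by $p_{w,s}(\epsilon/\sqrt{2})$ — precisely the substitution you carry out, with the same choice of $g_j$ as in (\ref{eq:normal_def}), the same $\tau$ and $\beta$ estimates from Assumption~\ref{ass:pe}, and the same monotonicity-plus-windowed-iteration bookkeeping. The one delicate point you flag — extracting a single linear certificate in the true-state regressor $D_j=D(x_j,u_j)$ from membership in $\Co\{\Delta_{j+1}^{(1)},\ldots,\Delta_{j+1}^{(h)}\}$, whose defining inequalities involve the noisy regressors $D(y_j-s^{(l)},u_j)$ — is left unresolved in your write-up, but it is equally unaddressed in the paper, so your treatment follows the paper's route at the paper's own level of rigour.
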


\begin{corollary}\label{cor:fixed_set_mn}
Under Assumptions~\ref{ass:compact_disturbance}, \ref{ass:pe}, \ref{ass:setS}, \ref{ass:probw_s} the parameter set $\Theta_t$ defined in Corollary~\ref{cor:min_set_v} or \ref{cor:fixed_set_v} converges to $\{\theta^\ast\}$ with probability 1.
\end{corollary}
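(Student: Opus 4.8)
The plan is to obtain this almost-sure statement from the geometric tail estimates of Corollaries~\ref{cor:min_set_v} and~\ref{cor:fixed_set_v} in exactly the way Corollary~\ref{cor:fixed_set} is obtained from Theorem~\ref{thm:fixed_set} and Corollary~\ref{cor:min_set} from Theorem~\ref{thm:min_set}: a Borel--Cantelli argument exploiting that $\Theta_t\subseteq\Theta_{t-1}$, so the events $\{\theta\in\Theta_t\}$ are nested and decreasing. The monotonicity and the fact that $\theta^\ast\in\Theta_t$ for all $t$ carry over to the measurement-noise setting, since the proof of Lemma~\ref{lem:monotonic_param_set} uses only $\theta^\ast\in\Delta_t$ and $\theta^\ast$ is consistent with the noisy data defining $\Delta_t$ in~(\ref{eq:mn_unfalsified_set}); I regard these verifications as routine. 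There are two cases, according to whether $\Theta_t$ is the intersection set of Corollary~\ref{cor:min_set_v} or the fixed-complexity set of Corollary~\ref{cor:fixed_set_v}.

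For the fixed-complexity case I would work with the tight representation $[\mu_t]_i=\max_{\theta\in\Theta_t}[M_\Theta]_i\theta$, which is nonincreasing in $t$ and bounded below by $[M_\Theta]_i\theta^\ast$. The crucial observation is that in the proof of Theorem~\ref{thm:fixed_set} (hence of Corollary~\ref{cor:fixed_set_v}) the vectors $g_j$ and the boundary points $w_j^0$ used to exclude $\theta$ depend only on the row index $i$ and the current PE window, not on the particular $\theta$ in the slab $[M_\Theta]_i(\theta-\theta^\ast)\geq\epsilon$; consequently the same estimate bounds $\Pr\{[\mu_t]_i-[M_\Theta]_i\theta^\ast\geq\epsilon\}$ by $\{1-[p_{w,s}(\epsilon\beta/(\sqrt2 N_u\tau))]^{N_u}\}^{\lfloor t/N_u\rfloor}$, which is summable in $t$. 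Borel--Cantelli then gives $\limsup_t[\mu_t]_i\leq[M_\Theta]_i\theta^\ast+\epsilon$ almost surely, and since $\epsilon>0$ is arbitrary (ranging over the rationals) and there are finitely many $i\in\NN_{[1,r]}$, we get $\mu_t\to M_\Theta\theta^\ast$ almost surely. Finally, the compactness hypothesis on $M_\Theta$ forces the recession cone $\{v:M_\Theta v\leq 0\}$ to be $\{0\}$, so $\{\theta:M_\Theta\theta\leq M_\Theta\theta^\ast\}=\{\theta^\ast\}$ and $\mu_t\to M_\Theta\theta^\ast$ is equivalent to convergence of $\Theta_t$ to $\{\theta^\ast\}$ in Hausdorff distance.

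For the intersection set $\Theta_t=\bigcap_{j=1}^t\Delta_j\cap\Theta_0$ there is no fixed face structure, so I would instead bound $\Pr\{\sup_{\theta\in\Theta_t}\|\theta-\theta^\ast\|\geq\epsilon\}$ directly: since $\sup_{\theta\in\Theta_t}\|\theta-\theta^\ast\|$ is nonincreasing, showing it tends to $0$ almost surely is equivalent to $\Theta_t\to\{\theta^\ast\}$ in Hausdorff distance. Cover the unit sphere of $\RR^p$ by finitely many caps with centres $u_1,\dots,u_M$; a direction-uniform refinement of Proposition~\ref{prop:min_set} — the half-space there excluding $\theta$ depends only on the direction of $\theta^\ast-\theta$, not on its length, so one favourable disturbance realisation in a window removes an entire ray-tail $\{\theta^\ast-\lambda u_m:\lambda\geq\epsilon\}$ simultaneously — together with a union bound over $m$ yields a summable geometric bound on $\Pr\{\sup_{\theta\in\Theta_t}\|\theta-\theta^\ast\|\geq\epsilon\}$, and Borel--Cantelli (over rational $\epsilon$) finishes.

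The main obstacle, and the step I would write out most carefully, is precisely this passage from ``every fixed $\theta\neq\theta^\ast$ is almost surely eventually excluded'' (which Corollaries~\ref{cor:min_set_v} and~\ref{cor:fixed_set_v} hand us directly for fixed $\theta$) to ``the whole set $\Theta_t$ shrinks to $\{\theta^\ast\}$'': a naive union over $\theta$ is impossible because $\Theta_0\setminus\{\theta^\ast\}$ is uncountable. Reducing the control to the $r$ fixed directions $[M_\Theta]_i$ in the fixed-complexity case, and to a finite directional net on the sphere in the intersection case, is exactly what makes the argument rigorous; everything else — summability, Borel--Cantelli, the identity $\{v:M_\Theta v\leq0\}=\{0\}$, and the measurement-noise extensions of Lemma~\ref{lem:monotonic_param_set} and Proposition~\ref{prop:min_set} — is routine book-keeping.
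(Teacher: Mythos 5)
Your proposal is correct in substance and follows the route the paper intends: the paper gives no separate proof of this corollary, presenting it (together with Corollaries~\ref{cor:min_set_v} and~\ref{cor:fixed_set_v}) as a direct transcription of the arguments for Corollaries~\ref{cor:min_set} and~\ref{cor:fixed_set}, i.e.\ monotonicity of $\Theta_t$, the geometric tail bounds with $p_{w,s}(\epsilon/\sqrt{2})$ in place of $p_w(\epsilon)$, and Borel--Cantelli. Where you differ is that you make explicit the pointwise-to-setwise step that the paper's own template handles with the single phrase ``since $\epsilon>0$ is arbitrary'': your support-function argument in the fixed-complexity case (the exclusion event in the proof of Theorem~\ref{thm:fixed_set} is uniform over the slab $[M_\Theta]_i(\theta-\theta^\ast)\geq\epsilon$, so it bounds $[\mu_t]_i$ directly, and compactness gives $\{\theta:M_\Theta\theta\leq M_\Theta\theta^\ast\}=\{\theta^\ast\}$) is sound and is a genuine sharpening rather than a deviation. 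One caveat in the minimal-set branch: excluding the ray-tails $\{\theta^\ast-\lambda u_m:\lambda\geq\epsilon\}$ through finitely many cap centres does not by itself control $\sup_{\theta\in\Theta_t}\|\theta-\theta^\ast\|$, since a point of $\Theta_t$ far from $\theta^\ast$ need not lie on (or force membership of) any of those rays; you need the favourable disturbance event to exclude a whole cone-tail of directions around each $u_m$, which follows by tightening the event to $\|w_j-w_j^0\|<\tfrac{\epsilon}{2}\sqrt{\beta/N_u}$ and using $\|D_j\|\leq\tau$ to absorb the perturbation $\|u'-u_m\|\leq\delta$ for $\delta$ small relative to $\sqrt{\beta/N_u}/\tau$ (note the map from direction to the boundary point $w^0$ need not be continuous, so one must separate with the cap-centre normal rather than appeal to continuity of $w^0$). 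With that repair the union bound and Borel--Cantelli go through exactly as you describe, and your treatment is if anything more complete than the paper's.
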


 {
\begin{remark}
If measurement noise is present, modifications to the proposed algorithm in section \ref{subsec:proposed_algorithm} are needed. If no additional output constraints are present, then, given the noisy measurement $y_t$, 
the constraint (\ref{eq:initial_con}) should be replaced by
\[
T \blue{x} \leq \alpha_0, \ \forall x \in \{y_t\}\oplus (-\S)
\]
in order to ensure robust satisfaction of input and state constraints. 
In addition, the unfalsified parameter set $\Delta_t$ is in this case given by (\ref{eq:mn_unfalsified_set}). 
\end{remark}}

\section{Numerical examples} \label{sec:example}
This section presents simulations to illustrate the operation of the proposed  adaptive robust MPC scheme. The section consists of two parts. The first part investigates the effect of additional weight $\gamma$ in optimization problem $\mathcal{P}$ by using the example of a second-order system from~\cite{Lorenzen2017}. The second part demonstrates the relationship between the speed of parameter convergence and minimal eigenvalue $\beta $ from the PE condition. 

\subsection{Objective function with weighted PE condition}
Consider the second-order discrete-time uncertain linear system from~\cite{Lorenzen2017}, with model parameters
\begin{gather*}
A_0 = \begin{bmatrix} 0.5 & 0.2 \\ -0.1 & 0.6 \end{bmatrix}, \ 
A_1 = \begin{bmatrix} 0.042 & 0 \\ 0.072 & 0.03 \end{bmatrix}, \
A_2 = \begin{bmatrix} 0.015 & 0.019 \\ 0.009 & 0.035 \end{bmatrix},  \ 
A_3 = \begin{bmatrix} 0 & 0 \\ 0 & 0 \end{bmatrix}, \\
B_0 = \begin{bmatrix} 0 \\ 0.5 \end{bmatrix}, \ 
B_1 = \begin{bmatrix} 0 \\ 0 \end{bmatrix}, \ 
B_2  = \begin{bmatrix} 0 \\ 0 \end{bmatrix}, \ 
B_3 = \begin{bmatrix} 0.0397 \\ 0.059 \end{bmatrix} ,
\end{gather*}
and with true system parameter $\theta^* = \begin{bmatrix} 0.8 & 0.2 &  {-0.5}\end{bmatrix}^\top$. The initial parameter set estimate is $\Theta_0 = \{ \theta : \| \theta \|_\infty \leq 1 \}$,  {and for all $t\geq 0$, $\Theta_t$ is a \blue{hyperrectangle}, with $\M_\theta = [\Id \ \ {-\Id}]^\top$ }.
The elements of the disturbance sequence $\{w_0,w_1,\ldots\}$ are independent and identically (uniformly) distributed on $\mathcal{W} = \{ w  \in \mathbb{R}^2 : \ \| w\|_\infty \leq 0.05\}$. The state and input constraints are $[x_t]_2 \geq -0.3$ and $u_t\leq 1$.  {The MPC prediction horizon and PE window length are set to be $N = 10$ and $N_u = 2$ respectively. The matrix $T$ is chosen according to Remark \ref{rem:T} and has 9 rows. }

All simulations were performed in Matlab on a 3.4\,GHz Intel Core i7 processor, and the online MPC optimization $\mathcal{P}$ was solved using Mosek \cite{Mosek}. For purposes of comparison, the same parameter set update law and nominal parameter update law were used in all cases. Robust satisfaction of input and state constraints and recursive feasibility were observed in all simulations, in agreement with Proposition~\ref{prop:recursive}.
\blue{To illustrate satisfaction of the state constraint $[x]_2 \geq -0.3$, Figure \ref{fig:initial_conditions} shows the cross-sections of the robust state tube predicted at $t = 0$, with initial condition $x_0 = (3,6)$, together with the closed-loop state trajectories for 10 different initial conditions. }
 
 \begin{figure}[htb]
\setlength{\fboxsep}{0pt}%
\setlength{\fboxrule}{0pt}%
\centerline{\includegraphics[scale=0.7, trim = 0mm 10mm 0mm 14mm, clip=true]{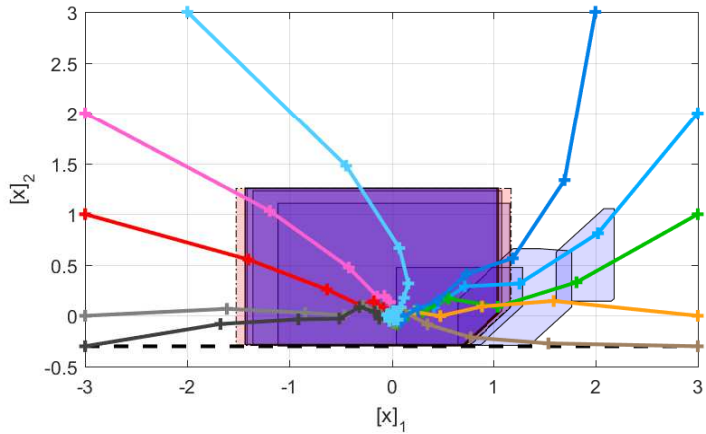}}
\caption{Closed-loop trajectories (solid lines) from different initial conditions, and predicted state tube cross-sections $\{\X_1, \dots, \X_{N}\}$ at $t=0$ for initial condition $x_0 = (3, 2)$, with $\X_N$ shown in red and enclosed by dashed line.}
\label{fig:initial_conditions}
\end{figure}
\vspace{-8mm}

\renewcommand{\arraystretch}{1.3}
\begin{table}[h!] 
\caption{Performance comparison of robust MPC algorithms, with and without PE conditions}
 {
\begin{tabular}{m{3.5cm}| m{2.6cm} m{2.6cm}  m{3.1cm}  m{3.1cm}  }
\hline
 & (A) & (B)& (C)& (D)\\
Algorithm &\blue{Homothetic tube (no PE)\cite{Lorenzen2018}*} &  Homothetic tube with PE \cite{Lorenzen2018} &  \blue{Proposed Algorithm (no PE)*}
  & Proposed Algorithm with PE ($\gamma = 10^3 $) \\
\hline
Yalmip time/s & \blue{0.1825} &0.7407 &\blue{0.2053} &0.2608 \\
Solver time/s &\blue{0.1354} &0.2065 &\blue{0.1016} &0.0766\\  
Computational time/s (Yalmip + solver time)&\blue{0.3179} &0.9472 &\blue{0.3069} & 0.3374\\
$\Theta_{100}$ set size /\%  &18.26 &{18.51} & 18.26 & 16.56 \\
\hline
\end{tabular}}
\label{table:compare}\\
\blue{*For algorithms without PE constraint, a QP solver, Gurobi~\cite{Gurobi}, is used instead of Mosek.} 
\end{table}

 {Table~\ref{table:compare} compares the the computational time and parameter sizes of the proposed algorithm and existing algorithms when the same initial conditions and disturbance sequences $\{w_0, w_1, \dots \}$ are used. Algorithm (A) refers to the robust adaptive MPC in Section 3.4 of Lorenzen et al.~\cite{Lorenzen2018}. Algorithm (B) is a modification of algorithm (A) that incorporates the PE constraint  $\sum^{N_u}_{l=0} \textbf{u}_{t-l}\textbf{u}_{t-l}^\top \geq \beta \Id$, which is implemented as described in Lu and Cannon~\cite{Lu2019} with a fixed $\beta$ value: $\beta=10^{-4}$. Algorithms (C) and (D) are the algorithm proposed in Section \ref{subsec:proposed_algorithm}, with and without the PE condition, respectively.  }

 Consider first algorithms (A) and (C) in Table~\ref{table:compare}. 
\blue{Although (C) uses a more flexible tube representation, there is negligible difference in overall computational time relative to (A).  This is due to the use of a more efficient method of enforcing constraints on predicted tubes in (A) than (C), which introduces additional optimization variables to enforce these constraints.
The more flexible tube representation employed in (C) 
provides a larger terminal set, as shown in Figure~\ref{fig:term_sets}.}
%
%
Moreover, the formulation of (C) incorporates information on $\Theta_t$ in the constraints, and as a result, the terminal set increases in size over time as the parameter set $\Theta_t$ shrinks. 
On the other hand, the homothetic tube MPC employed in (A) employs a terminal set that is computed offline and is not updated online. 

 {Comparing algorithms (B) and (D) in Table~\ref{table:compare}, it can be seen that implementing the PE condition using an augmented cost function and linearized constraint (as in (D)) results in lower computation and faster parameter convergence than using a PE constraint with a fixed value of $\beta$ (as in (B)). 
Tuning the value of $\beta$ in algorithm (B) is challenging, since a value that is too small results in slow convergence whereas choosing $\beta$ too large frequently causes the PE constraint to be infeasible. Moreover, whenever the PE constraint is infeasible in algorithm (B), the MPC optimization is solved a second time without the PE constraint, thus increasing computation.}
\begin{figure}[htb]
\centerline{\includegraphics[scale=0.5, trim = 6mm 2mm 12mm 4mm, clip=true]{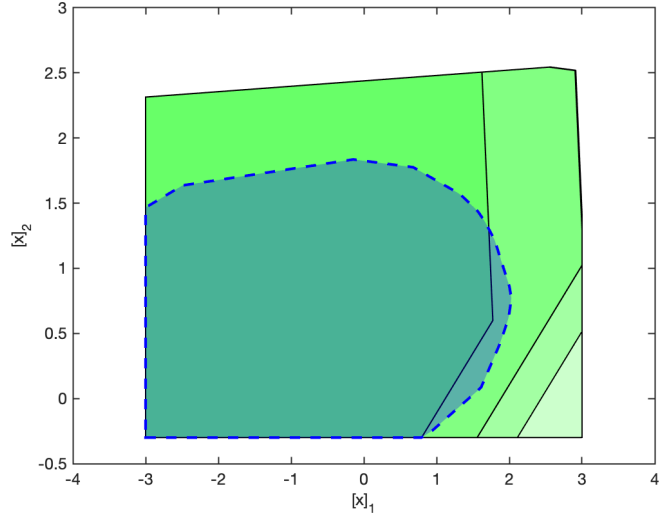}}
\caption{ {Terminal sets for Algorithm (C) at times $t = 0, 1, 100, 1000$ and the terminal set for (A) (which is computed offline and not updated online).
The terminal sets for (C) are shown in green with solid boundaries, and are nested and increasing over time.
The terminal set for (A) is shown in blue with dashed line boundary. }}
\label{fig:term_sets}
\end{figure}


\begin{figure}[htb]
\setlength{\fboxsep}{0pt}%
\setlength{\fboxrule}{0pt}%
\centerline{\includegraphics[scale=0.7, trim = 6mm 0mm 8mm 4mm, clip=true]{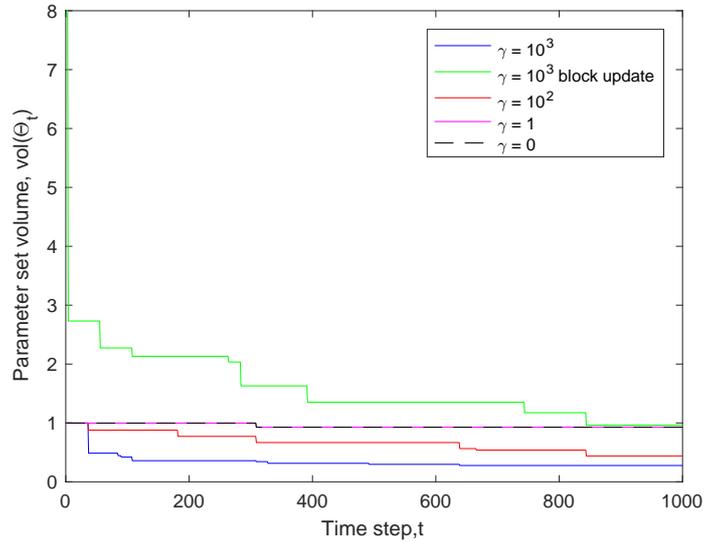}}
\caption{Volume of parameter set $\Theta_t$ over time for a range of weights $\gamma$ in the MPC objective function~(\ref{eq:cost_J2}).}
\label{fig:closed_loop_different_gamma}
\end{figure}

Figure \ref{fig:closed_loop_different_gamma} shows the effect of the weighting coefficient $\gamma$ in the objective function~(\ref{eq:cost_J2}) on the parameter set $\Theta_t$ when the same initial conditions  {($x_0 =  [3,4]^\top$, $\Theta_0$, $\bar{\theta}_0$)} and disturbance sequences $\{w_0,w_1,\ldots\}$ are used. Larger values of $\gamma$ place greater weighting on $\beta$ in the MPC cost (\ref{eq:cost_J2}), and thus on satisfaction of the PE condition (\ref{eq:PE_condition}). Therefore increasing $\gamma$ results in a faster convergence rate in the parameter set volume.  {When the same weighting coefficient $\gamma$ is used, performing the parameter set update periodically (as discussed in Remark \ref{rem:periodic_update}) slows down the convergence rate of the parameter set, as shown by the green line. 
For this simulation, the parameter set update (online step 2 in Section \ref{subsec:proposed_algorithm}) takes only 2\% of the total computational time. }


The relationship between weighting coefficient $\gamma$ and volume of parameter set $\Theta_t$ is illustrated in Figure \ref{fig:volume_against_gamma}.  
For values of $\gamma$ between $10^{-3}$ and $10^3$, closed loop simulations were performed with the same initial conditions, disturbance sequences, and initial nominal model and parameter set. The parameter set volume after 20 time-steps is shown. Figure \ref{fig:volume_against_gamma} also shows that increasing $\gamma$ results in a faster parameter set convergence rate, in agreement with Figure~\ref{fig:closed_loop_different_gamma}. 
%

\begin{figure}[htb]
\setlength{\fboxsep}{0pt}%
\setlength{\fboxrule}{0pt}%
\centerline{\includegraphics[scale=0.5, trim = 6mm 0mm 8mm 4mm, clip=true]{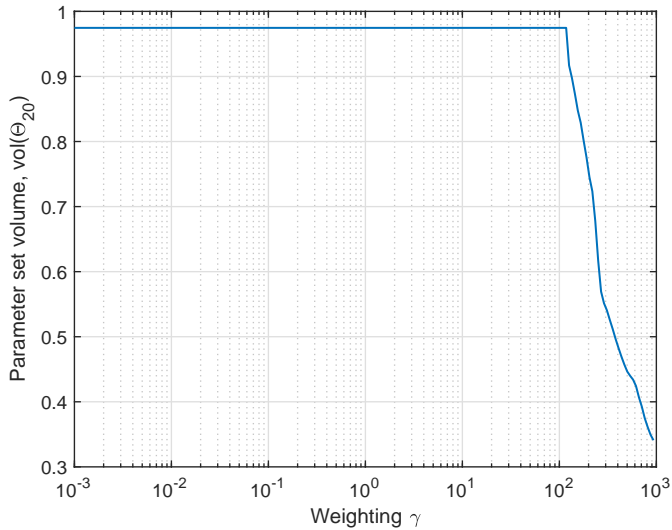}}
\caption{Volume of parameter set $\Theta_{20}$ against weighting $\gamma$ in the MPC objective function (\ref{eq:cost_J2}).}
\label{fig:volume_against_gamma}
\end{figure}

For the same set of simulations, Figure \ref{fig:beta_against_gamma} shows the optimal value of $\beta$ in (\ref{eq:PE_constraint}) and (\ref{eq:PE_condition}) against $\gamma$. From (\ref{eq:cost_J2}), it is expected that a larger $\gamma$ value will increase the influence of the term $-\gamma \beta$, thus pushing $\beta$ to be more positive. The left-hand figure shows the value of $\beta$ in the convexified constraint (\ref{eq:PE_constraint}). As expected, the increase in $\gamma$ leads to a smooth increase in $\beta$ initially, but after a certain point, any further increase in the weighting factor $\gamma$ does not affect the calculated $\beta$ value. The right-hand figure shows the value of $\beta_1$ in the PE condition (\ref{eq:PE_condition}). 
 {The difference between $\beta$ and $\beta_1$ illustrates the conservativeness of the convexification proposed in Section \ref{sec:pe_cost}. 
Note that this can be reduced by repeating steps (3) and (4) in the online part of the proposed algorithm, thus iteratively re-computing the reference sequences $\hat{\bf x}$, $\hat{\bf u}$ and reducing the conservativeness of linearisation to any desired level.}
It is interesting to note that, although the optimal value of $\beta$ in (\ref{eq:PE_constraint}) levels off at $\gamma = 1$, the value of $\beta_1$ in the PE condition (\ref{eq:PE_condition}) increases monotonically between $\gamma = 10$ and $\gamma = 10^3$. 
The smaller $\beta$ values observed with (\ref{eq:PE_condition}) also explain the lower rates of parameter convergence for small values of $\gamma$ in Figure \ref{fig:volume_against_gamma}. In practice, it can be used as a guideline for the tuning of $\gamma$.

\begin{figure}[!ht]
\setlength{\fboxsep}{0pt}%
\setlength{\fboxrule}{0pt}%
\centerline{%
\includegraphics[scale=0.5,trim = 4mm 0mm 14mm 7mm, clip]{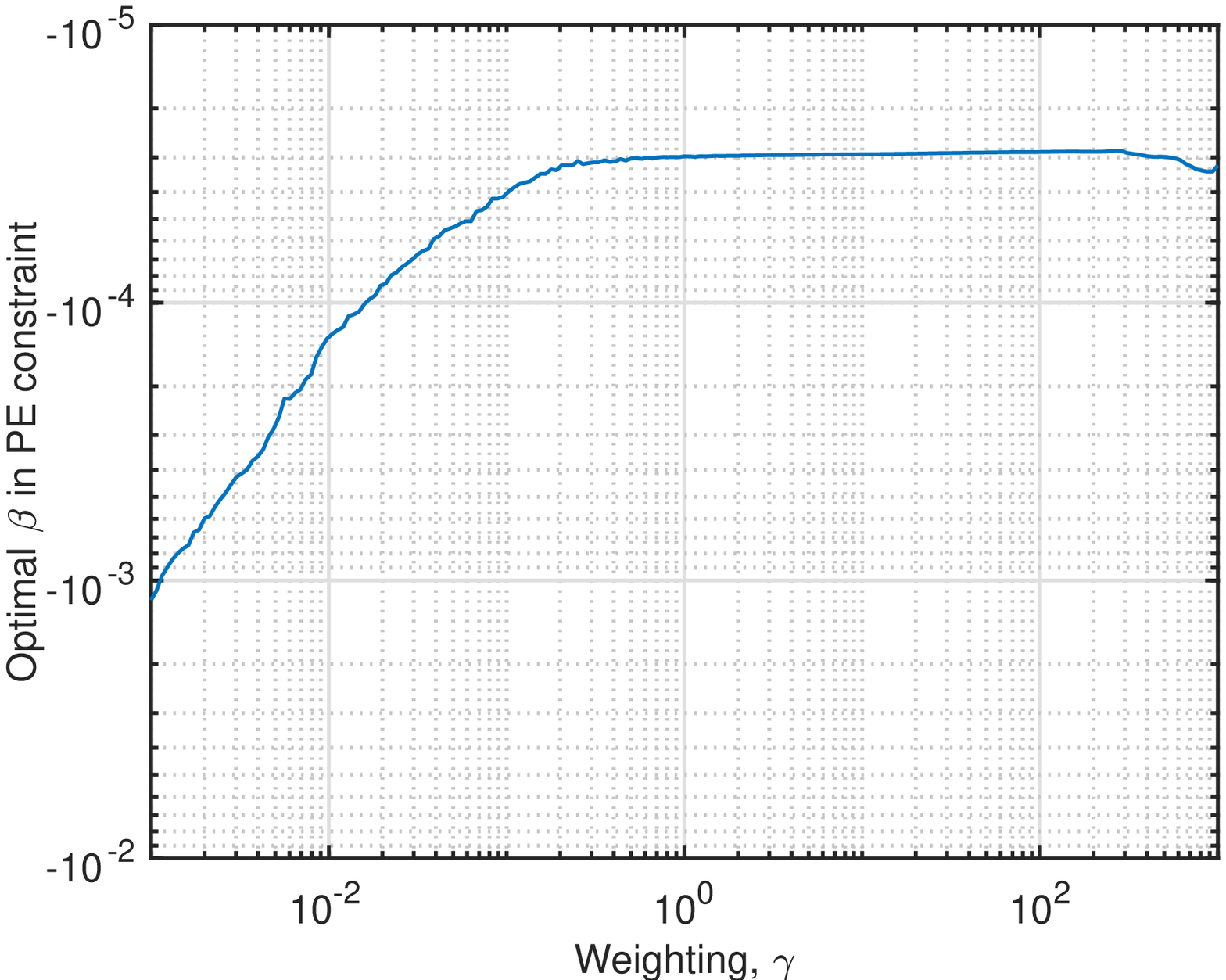}%
\includegraphics[scale=0.5,trim = 5mm 0mm 16mm 7mm, clip]{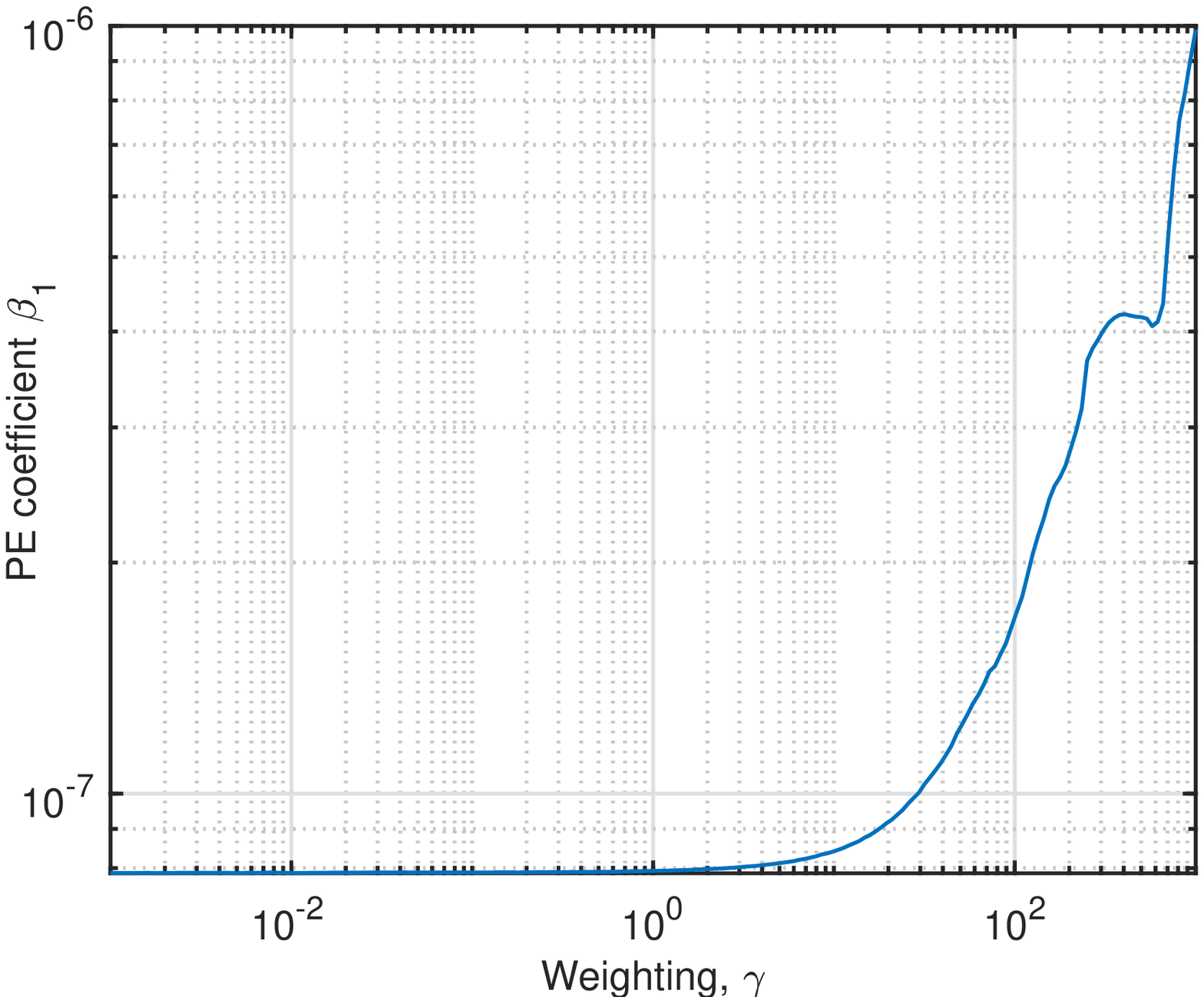}}
\caption{Degree of satisfaction of conditions for persistency of excitation as a function of the weighting $\gamma$ in the MPC objective function (\ref{eq:cost_J2}). Left: optimal value of $\beta$ in the constraint (\ref{eq:PE_constraint}). Right: computed value of the PE coefficient $\beta_1$ in (\ref{eq:PE_condition}).}
\label{fig:beta_against_gamma}
\end{figure}

 {Table \ref{table:volume} illustrates the convergence of the estimated parameter set over a large number of time steps for the initial condition $x_0 = [2,3]^\top$ and a randomly generated disturbance sequence $\{w_0, w_1, \dots\}$. Here $\gamma$ was chosen as $10^3$ to speed up the convergence process. In agreement with Theorem \ref{thm:fixed_set}, $\Theta_t$ has shrunk to a small region around the true parameter value at $t = 5000$.}

\vspace{-3mm}
\renewcommand{\arraystretch}{1.3}
\begin{table}[h!] 
\caption{Asymptotic convergence of the estimated parameter set}
 {
\centerline{\begin{tabular}{ c| c c c c c c c c }
\hline
Time Step /t & 0 & 1 & 5 & 50 & 100 & 500 & 1000 & 5000\\
\hline
$\Theta_t$ set size /\% & 100 & 30.21 &18.50 & 14.46 & 12.75 & 11.11 & 1.51 & 0.25\\
\hline
 \end{tabular}}}
\label{table:volume}
\end{table}

\subsection{Relationship between PE coefficient and convergence rate}
We next consider third-order discrete-time linear systems given by (\ref{eq:update_equation})
with $x \in \RR^3$, $u \in \RR^2$, $\theta \in \RR^3$ and 
\[
\W = \{w: \|w \|_\infty \leq 0.1\}.
\]
The system matrices $\big(A(\theta), B(\theta)\big)$ satisfy (\ref{eq:system_model}) with randomly generated $A_i$, $ B_i$, $\theta^\ast$ parameters and initial parameter set 
$\Theta_0 = \{\theta: \| \theta\|_\infty \leq 0.25\}$. 
%
In each case the estimated parameter sets $\Theta_t$ have fixed complexity, with face normals aligned with the coordinate axes in parameter space. 
A linear feedback law is applied, $u_t = Kx_t$, where $K$ is a stabilizing gain. We use these systems to investigate the relationship between the coefficient $\beta_1$ in the PE condition~(\ref{eq:PE_condition}) and rate of convergence of the estimated parameter set.

Taking the window length in~(\ref{eq:PE_condition}) to be $N_u = 10$, closed-loop trajectories were computed for 10 time steps
and the parameter set $\Theta_t$ was updated according to (\ref{eq:fixed_set_update}).
Simulations were performed for 500 different initial conditions, and the average value of $\beta_1$ was computed for each initial condition using 100 random disturbance sequences $\{w_0,w_1,\ldots\}$. 
Figure \ref{fig:average_against_beta} illustrates the relationship between the average size of the identified parameter set $\Theta_t$ and the average value of $\beta_1$ in the PE condition~(\ref{eq:PE_condition}). Clearly, increasing $\beta_1$ results in a smaller parameter set on average, and hence a faster rate of convergence of $\Theta_t$, which is consistent with the analysis of Section~\ref{sec:fixed_set}. The inner and outer radii shown in the figure on the left are the radii of the smallest and largest spheres, respectively, that contain and are contained within the parameter set estimate after 10 time steps. 
A similar trend can also be seen between the average volume of the parameter set $\Theta$ and the ensemble average value of $\beta_1$. 

\begin{figure}[!ht]
\setlength{\fboxsep}{0pt}%
\setlength{\fboxrule}{0pt}%
\centerline{%
\includegraphics[scale=0.5,trim = 4mm 0 14mm 5mm, clip]{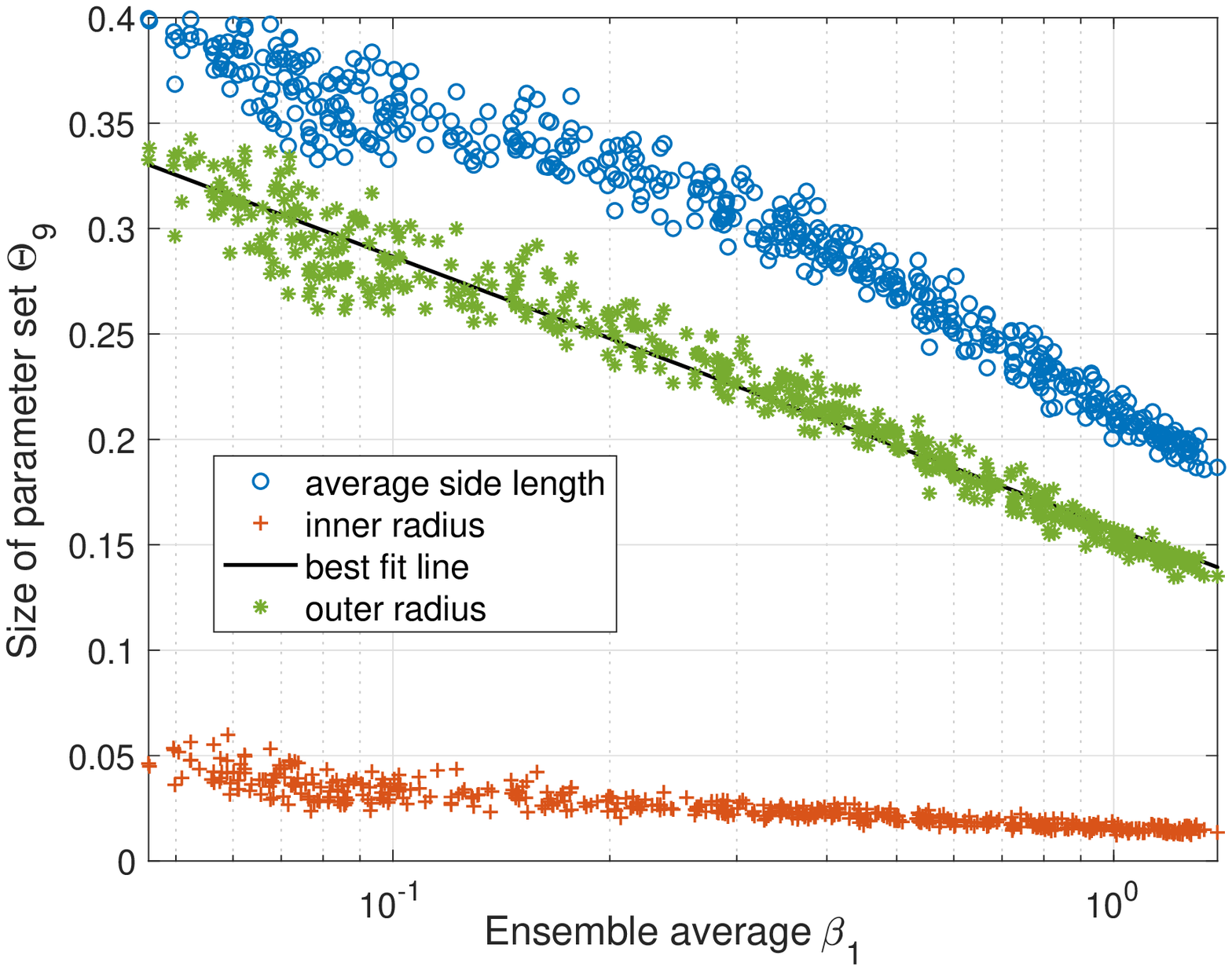}%
\includegraphics[scale=0.5, trim = 0 0 16mm 5mm, clip]{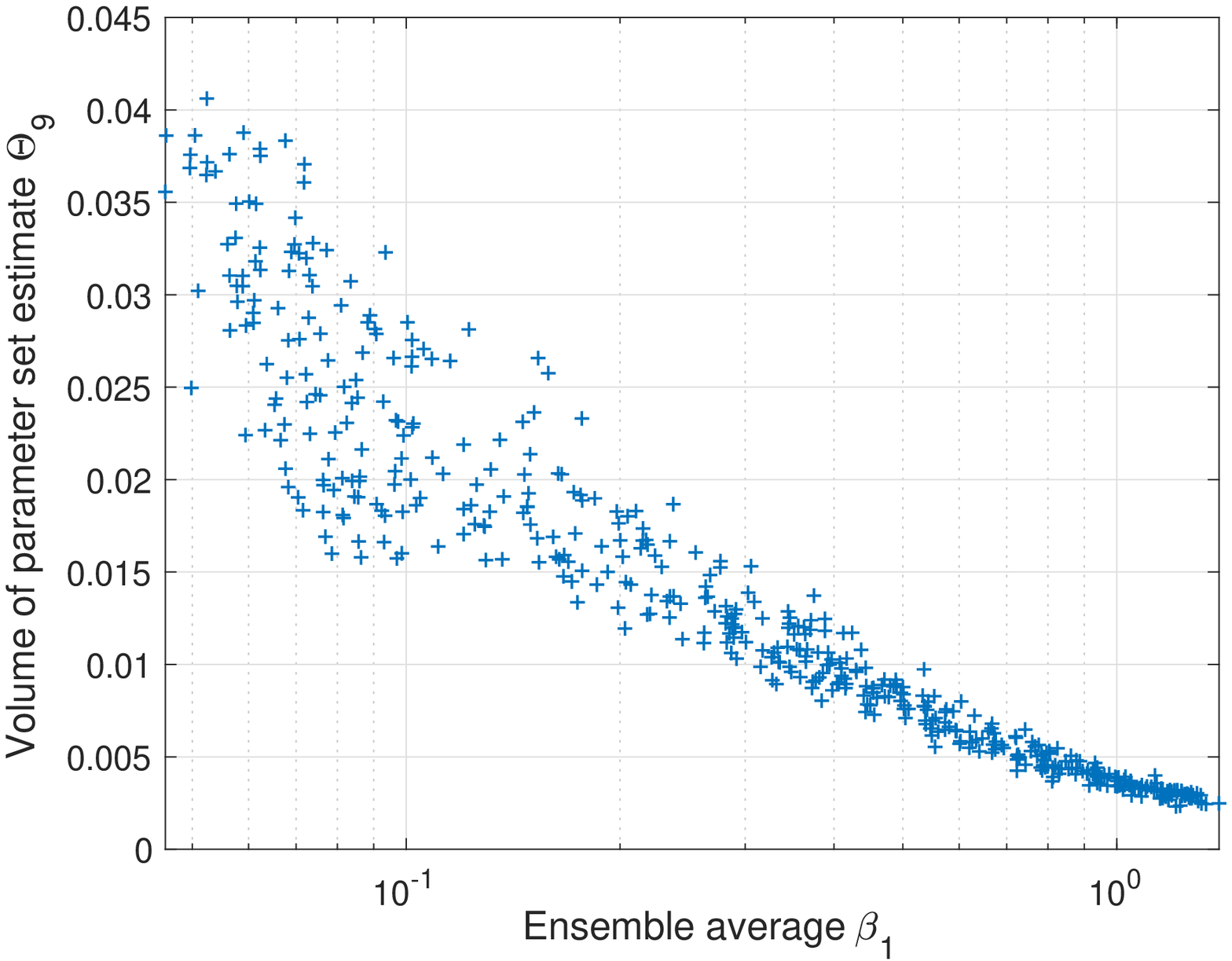}%
}
\caption{Average size of parameter set after 10 time steps against average value of $\beta_1$ in the PE condition (\ref{eq:PE_condition}) with $N_u=10$. The parameter set size and $\beta_1$ were computed for 500 different initial conditions. Left: mean side length and inner and outer radii of $\Theta_9$. Right: volume of $\Theta_9$.}
\label{fig:average_against_beta}
\end{figure}



\section{Conclusions}
In this paper we propose an adaptive robust MPC algorithm that combines robust tube MPC and set membership identification. 
The MPC formulation employs a nominal performance index and guarantees robust constraint satisfaction, recursive feasibility and input-to-state stability. A convexified persistent excitation condition is included in the MPC objective via a weighting coefficient, and the relationship between this weight and the convergence rate of the estimated parameter set is investigated. For computational tractability, a fixed complexity polytope is used to approximate the estimated parameter set. The paper proves that the parameter set will converge to the vector of system parameters with probability 1 despite this approximation. Conditions for convergence of the estimated parameter set are derived for the case of inexact disturbance bounds and noisy measurements. 
Future work will consider systems with stochastic model parameters and probabilistic constraints. 
Quantitative relationships between the convergence rate of the estimated parameter set and conditions for persistency of excitation will be investigated further and methods of enforcing persistency of excitation of the closed loop system will be considered.


\bibliographystyle{plain}
\bibliography{Sample1_edited}

\end{document}